\theoremstyle{plain}
\newtheorem{theo}{Theorem}[section]
\newtheorem{lemme}[theo]{Lemma}
\newtheorem{prop}[theo]{Proposition}
\newtheorem{coro}[theo]{Corollary}
\theoremstyle{definition}
\newtheorem{defn}[theo]{Definition}
\newtheorem{nota}[theo]{Notation} 
\newtheorem{conj}[theo]{Conjecture}
\theoremstyle{remark}
\newtheorem{rema}[theo]{Remark}
\def\Sp{{\rm Sp}}
\def\J{{\rm J}}
\def\U{{\rm U}}
\def\M{{\rm M}}
\def\det{{\rm det}}
\def\O{{\rm O}}
\def\GL{{\rm GL}}
\def\tr{{\rm tr}}
\def\G{{\rm G}}
\def\C{{\rm C}}
\def\N{{\rm N}}
\def\S{{\rm S}}
\def\exp{{\rm exp}}
\def\dim{{\rm dim}}
\def\diag{{\rm diag}}
\def\Stab{{\rm Stab}}
\def\Im{{\rm Im}}
\def\Op{{\rm Op}}
\def\Hom{{\rm Hom}}
\def\supp{{\rm supp}}
\def\Ad{{\rm Ad}}
\def\rk{{\rm rk}}
\def\H{{\rm H}}
\def\Re{{\rm Re}}
\def\reg{{\rm reg}}
\def\Z{{\rm Z}}
\def\ch{{\rm ch}}
\def\sign{{\rm sign}}
\def\K{{\rm K}}
\def\P{{\rm P}}
\def\Id{{\rm Id}}
\def\Mat{{\rm Mat}}
\def\E{{\rm E}}
\def\A{{\rm A}}
\def\T{{\rm T}}
\def\Chc{{\rm Chc}}
\def\D{{\rm D}}
\def\L{{\rm L}}
\def\ch{{\rm ch}}
\def\sh{{\rm sh}}
\def\th{{\rm th}}
\def\i{{\rm i}}
\def\V{{\rm V}}
\def\W{{\rm W}}
\def\M{{\rm M}}
\def\X{{\rm X}}
\def\Y{{\rm Y}}
\def\st{{\rm st}}
\def\min{{\rm min}}
\def\R{{\rm R}}
\def\Eig{{\rm Eigen}}
\def\Spec{{\rm Spec}}
\def\Stab{{\rm Stab}}
\def\Gar{{\rm Gar}}
\def\Ann{{\rm Ann}}
\def\Cl{{\rm Cl}}
\def\d{{\rm d}}
\def\st{{\rm st}}
\title{Transfer of characters for discrete series representations of the unitary groups in the equal rank case via the Cauchy-Harish-Chandra integral}
\author{Allan Merino}
\address{ Department of Mathematics \\ National University of Singapore \\ Block S17 \\ 10, Lower Kent Ridge Road \\ Singapore 119076 \\ Republic of Singapore}
\email{matafm@nus.edu.sg}
\keywords{Howe correspondence, Characters, Cauchy--Harish-Chandra integral, Orbital Integrals, Discrete Series Representations}
\subjclass[2010]{Primary: 22E45; Secondary: 22E46, 22E30.}
\date{}
\begin{document}

\maketitle

\begin{abstract}

As conjectured by T. Przebinda, the transfer of characters in the Howe's correspondence should be obtained via the Cauchy-Harish-Chandra integral. In this paper, we prove that the conjecture holds for the dual pair $(\G = \U(p, q), \G' = \U(r, s))$, $p+q = r+s$, starting with a discrete series representation $\Pi$ of $\widetilde{\U}(p, q)$.

\end{abstract}

\tableofcontents

\section{Introduction}

Let $\W$ be a finite dimensional vector space over $\mathbb{R}$ endowed with a non-degenerate, skew-symmetric, bilinear form $\langle\cdot,\cdot\rangle$, $\Sp(\W)$ be the corresponding group of isometries, $\widetilde{\Sp}(\W)$ be the metaplectic cover of $\Sp(\W)$ (see \cite[Definition~4.18]{TOM6}) and $(\omega, \mathscr{H})$ be the corresponding Weil representation (see \cite[Section~4.8]{TOM6}). For every irreducible reductive dual pair $(\G, \G')$ in $\Sp(\W)$, R. Howe proved (see \cite[Theorem~1]{HOW1}) that there is a bijection between $\mathscr{R}(\widetilde{\G}, \omega)$ and $\mathscr{R}(\widetilde{\G'}, \omega)$ whose graph is $\mathscr{R}(\widetilde{\G}\cdot\widetilde{\G'}, \omega)$ (where $\mathscr{R}(\widetilde{\G}, \omega)$ is defined in Section \ref{SectionCauchyHarishChandra}). More precisely, to every $\Pi \in \mathscr{R}(\widetilde{\G}, \omega)$, we associate a representation finitely generated admissible representation $\Pi'_{1}$ of $\widetilde{\G'}$ which has a unique irreducible quotient $\Pi'$ such that $\Pi \otimes \Pi' \in \mathscr{R}(\widetilde{\G}\cdot\widetilde{\G'}, \omega)$. We denote by $\theta: \mathscr{R}(\widetilde{\G}, \omega) \ni \Pi \to \Pi' = \theta(\Pi) \in \mathscr{R}(\widetilde{\G'}, \omega)$ the corresponding bijection.

\noindent As proved by Harish-Chandra (see \cite[Section~5]{HAR3} or Section \ref{TransferEigenDistributions}), all the representations $\Pi$ of $\widetilde{\G}$ (resp. $\Pi'$ of $\widetilde{\G'}$) appearing in the correspondence admit a character, i.e. a $\widetilde{\G}$-invariant distribution $\Theta_{\Pi}$ on $\widetilde{\G}$ (in the sense of Laurent Schwartz) given by a locally integrable function $\Theta_{\Pi}$ on $\widetilde{\G}$ which is analytic on $\widetilde{\G}^{\reg}$ (the set of regular elements of $\widetilde{\G}$). The character $\Theta_{\Pi}$ determines the representation $\Pi$. In particular, one way to understand the Howe correspondence, i.e. to make the map $\theta$ explicit, is to understand the transfer of characters.

\noindent In his paper \cite{TOM1}, T. Przebinda conjectured that the correspondence of characters should be obtained via the so-called Cauchy-Harish-Chandra integral that he introduced in \cite{TOM1}. We recall briefly the construction of this integral. Let $\T: \widetilde{\Sp}(\W) \to \S^{*}(\W)$ be the embedding of the metaplectic group inside the space of tempered distributions on $\W$ as in \cite[Definition~4.23]{TOM6} (see also Remark \ref{RemarkDistributionT}) and $\H_{1}, \ldots, \H_{n}$ be a maximal set of non-conjugate Cartan subgroups of $\G$ which are $\iota$-invariant, where $\iota$ is a Cartan involution on $\G$. Every Cartan subgroup $\H_{i}$ can be decomposed as $\H_{i} = \T_{i}\A_{i}$, with $\T_{i}$ maximal compact in $\H_{i}$. Let $\A'_{i}$ and $\A''_{i}$ be the subgroups of $\Sp(\W)$ defined by $\A'_{i} = \C_{\Sp(\W)}(\A_{i})$ and $\A''_{i} = \C_{\Sp(\W)}(\A'_{i})$. One can easily check that $(\A'_{i}, \A''_{i})$ form a dual pair in $\Sp(\W)$, which is not irreducible in general. For every function $\varphi \in \mathscr{C}^{\infty}_{c}(\widetilde{\A'_{i}})$, we define $\Chc(\varphi)$ by
\begin{equation*}
\Chc(\varphi) = \displaystyle\int_{\A''_{i} \setminus \W_{\A''_{i}}} \T(\varphi)(w) \overline{dw},
\end{equation*}
where $\overline{dw}$ is a measure on the manifold $\A''_{i} \setminus \W_{\A''_{i}}$ defined in \cite[Section~1]{TOM1}. As mentioned in \cite[Section~2]{TOM1} (see also Section \ref{SectionCauchyHarishChandra}), $\Chc(\Psi)$ is well-defined and the corresponding map $\Chc: \mathscr{C}^{\infty}_{c}(\widetilde{\A'_{i}}) \to \mathbb{C}$ is a distribution on $\widetilde{\A'_{i}}$. For every regular element $\tilde{h}_{i} \in \widetilde{\H_{i}}^{\reg}$, we denote by $\Chc_{\tilde{h}_{i}}$ the pull-back of $\Chc$ through the map $\widetilde{\G'} \ni \tilde{g}' \to \tilde{h}_{i}\tilde{g}' \in \widetilde{\A'_{i}}$. 

\noindent Assume now that $\rk(\G) \leq \rk(\G')$. In \cite{TOM4} (see also Section \ref{TransferEigenDistributions}), F. Bernon and T. Przebinda defined a map:
\begin{equation*}
\Chc^{*}: \mathscr{D}'(\widetilde{\G})^{\widetilde{\G}} \to \mathscr{D}(\widetilde{\G'})^{\widetilde{\G'}}, 
\end{equation*}
where $\mathscr{D}'(\widetilde{\G})^{\widetilde{\G}}$ is the set of $\widetilde{\G}$-invariant distributions on $\widetilde{\G}$. More precisely, if $\Theta$ is a $\widetilde{\G}$-invariant distribution given by a locally integrable function $\Theta$ on $\widetilde{\G}$, then, for every $\varphi \in \mathscr{C}^{\infty}_{c}(\widetilde{\G'})$, we get:
\begin{equation*}
\Chc^{*}(\Theta)(\varphi) = \sum\limits_{i=1}^{n} \cfrac{1}{|\mathscr{W}(\H_{i})|} \displaystyle\int_{\widetilde{\H_{i}}^{\reg}} \Theta(\tilde{h}_{i})|\det(1-\Ad(\tilde{h}^{-1}_{i}))_{\mathfrak{g}/\mathfrak{h}_{i}}| \Chc_{\tilde{h}_{i}}(\varphi) d\tilde{h}_{i}.
\end{equation*}
The conjecture can be stated as follows:

\begin{conj}

Let $\G_{1}$ and $\G'_{1}$ be the Zariski identity components of $\G$ and $\G'$ respectively. Let $\Pi \in \mathscr{R}(\widetilde{\G}, \omega)$ satisfying ${\Theta_{\Pi}}_{|_{\widetilde{\G}/\widetilde{\G_{1}}}} = 0$ if $\G = \O(\V)$, where $\V$ is an even dimensional vector space over $\mathbb{R}$ or $\mathbb{C}$. Then, up to a constant, $\Chc^{*}(\Theta_{\Pi}) = \Theta_{\Pi'_{1}}$ on $\widetilde{\G'_{1}}$.

\end{conj}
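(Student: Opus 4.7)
The plan is to establish $\Chc^*(\Theta_\Pi) = c\cdot\Theta_{\Pi'_1}$ as an identity of locally integrable $\widetilde{\G'_1}$-invariant functions that are analytic on the regular set. By Harish-Chandra's regularity theorem it suffices to verify equality pointwise on $\widetilde{\H'_j}^{\reg}$ as $\H'_j$ ranges over a system of representatives of conjugacy classes of Cartan subgroups of $\G'_1$. In the equal rank Howe correspondence for unitary groups, $\Pi'_1$ is itself a discrete series (or a limit thereof) of $\widetilde{\G'}$, so Harish-Chandra's explicit character formula is available on every Cartan of $\widetilde{\G'_1}$.

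The first step is a localization argument: for a test function $\varphi \in \mathscr{C}^\infty_c(\widetilde{\G'_1})$ supported near a regular element $\tilde{h}'_j \in \widetilde{\H'_j}^{\reg}$, combine the Weyl integration formula on $\widetilde{\G'}$ with the defining expression for $\Chc^*(\Theta_\Pi)$ to convert the distributional identity into a pointwise one, reducing the problem to an explicit evaluation of $\Chc_{\tilde{h}_i}$ for each Cartan $\H_i$ of $\G$ and its subsequent integration against $\Theta_\Pi|_{\widetilde{\H_i}}$. Here the natural matching of Cartans in the equal rank case intervenes: if $\H_i = \T_i\A_i$ is an $\iota$-stable Cartan of $\G$, then $\C_{\G'}(\A_i)$ contains a Cartan $\H'_i = \T'_i\A_i$ of $\G'$ with the same vector part, and conversely. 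I expect, as is typical for Cauchy-Harish-Chandra kernels, that $\Chc_{\tilde{h}_i}$ regarded as a distribution on $\widetilde{\G'_1}$ is supported on $\widetilde{\G'_1}\cdot\widetilde{\H'_i}$, so only the matching index contributes. The evaluation on the matching Cartan then exploits the factorization of the reducible dual pair $(\A'_i,\A''_i)$, which according to the Witt type of $\A_i$ decomposes as a product of an irreducible compact-type dual pair (coming from the anisotropic part) and several general linear dual pairs (coming from the split part); the Weil representation factorizes accordingly, and inserting Harish-Chandra's explicit formula for $\Theta_\Pi|_{\widetilde{\H_i}}$ together with the Weyl denominator identity should yield $\Theta_{\Pi'_1}$ on $\widetilde{\H'_i}^{\reg}$ up to a constant.

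The main obstacle is precisely the claimed support property of $\Chc_{\tilde{h}_i}$ and the control of singularities: the orbital integral over $\A''_i\setminus\W_{\A''_i}$ defining $\Chc_{\tilde{h}_i}$ has singularities along the walls of $\A_i$, which must be matched against the boundary behaviour of the discrete series character. A moment map/stationary phase analysis on $\A''_i\setminus\W_{\A''_i}$, together with Harish-Chandra's wave packet coefficients for discrete series characters on non-compact Cartans, should deliver both the vanishing outside the matching Cartan and the correct matching of Harish-Chandra parameters. The proportionality constant is fixed globally once the identity holds on one Cartan and can be computed by specialization at a convenient reference element, by tracking the Haar measure normalizations on $\widetilde{\H_i}$, the measure $\overline{dw}$ on $\A''_i\setminus\W_{\A''_i}$, and the genuine character of the metaplectic cover.
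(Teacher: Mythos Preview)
Your central structural assumption---that $\Chc_{\tilde{h}_i}$, viewed as a distribution on $\widetilde{\G'_1}$, is supported on the single orbit $\widetilde{\G'_1}\cdot\widetilde{\H'_i}$ of the ``matching'' Cartan---is false, and this is precisely the point where the paper's argument diverges from yours. Already for $\tilde{h}$ in the \emph{compact} Cartan $\H = \H(\emptyset)$ of $\G$, Theorem~\ref{TheoremTheta} expresses $\Chc_{\check{p}(\check{h})}(\varphi)$ as a sum over \emph{all} subsets $\S \subseteq \Psi'^{\st}_{n}$, i.e.\ over all conjugacy classes of Cartan subgroups of $\G'$, each term involving the Harish-Chandra transform $\mathscr{H}_{\S}\varphi$. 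Likewise, in the proof of Proposition~\ref{ImportantProposition2} one sees that for $\varphi$ supported on $\widetilde{\G'}\cdot\widetilde{\H'}(\S_{i})$ the value of $\Theta'_{\Pi}(\varphi)$ receives contributions from $\Chc_{\tilde{h}_j}$ for every $j$ with $0 \leq j \leq \min(i, p)$, not just $j = i$; the reduction formula \eqref{ChcNonCompact} feeds $\Chc_{\tilde{h}_j}$ through the Harish-Chandra transform $\varphi^{\widetilde{\K'}}_{\widetilde{\N'}(\S_j)}$ and an inner $\Chc_{\W_{0,j}}$ for the smaller dual pair $(\G(\V_{0,j}), \G(\U_j))$, which again spreads over all Cartans of $\G(\U_j)$. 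So there is no clean one-to-one matching of Cartans at the level of supports, and the ``only the matching index contributes'' step cannot be carried out.

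The paper avoids this difficulty entirely by not attempting Cartan-by-Cartan verification. Instead it invokes Harish-Chandra's characterization of discrete series characters (Theorem~\ref{TheoremDiscreteSeries2}, i.e.\ \cite[Lemma~44]{HAR5}): an invariant eigendistribution with the correct infinitesimal character, satisfying the growth bound $\sup_{\tilde{g}'}|\D(\tilde{g}')|^{1/2}|\Theta'_{\Pi}(\tilde{g}')| < \infty$, is determined by its restriction to the compact Cartan alone. Thus the paper only computes $\Theta'_{\Pi}$ explicitly on $\widetilde{\H'}^{\reg}$ (Proposition~\ref{ImportantProposition1}, via the Cauchy integrals of Lemma~\ref{LemmaComplexIntegrals}), establishes the growth bound on all Cartans (Proposition~\ref{ImportantProposition2}, where the mixing of Cartans you overlooked is handled but only at the level of estimates, not exact formulas), and takes the infinitesimal character from \cite[Theorem~1.3]{TOM4} (Lemma~\ref{ImportantLemma1}). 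Your program of matching exact character formulas on every non-compact Cartan would in addition require control of the Harish-Chandra coefficients $c(w)$ on those Cartans, which is substantially harder and is exactly what the characterization theorem is designed to bypass.
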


\noindent This result is well-known if the group $\G$ is compact and had been proved recently in \cite{TOM5} in the stable range. In this paper, we investigate the case $(\G, \G') = (\U(p, q), \U(r, s))$, $p+q = r+s$ ($p$ will always be assumed to be smaller or equal than $q$, in particular, the number of non-conjugate Cartan subgroups of $\G$ is p+1 (see Remark \ref{ExampleUnitary})) and $\Pi \in \mathscr{R}(\widetilde{\G}, \omega)$ a discrete series representation of $\widetilde{\G}$. Let $\lambda$ be the Harish-Chandra parameter of $\Pi$. In this case, using Li's result (see \cite[Proposition~2.4]{LI2} or Section \ref{CommutativeDiagram}), we get that $\Pi'_{1} = \Pi'$ and using \cite{PAUL2}, we know that $\Pi' = \theta(\Pi)$ is a discrete series representations of $\widetilde{\G'}$ (with Harish-Chandra parameter $\lambda'$), and the correspondence $\lambda \to \lambda'$ is known and explicit (see \cite[Theorem~2.7]{PAUL2}).

\noindent In order to prove that, up to a constant, $\Chc^{*}(\Theta_{\Pi}) = \Theta_{\Pi'_{1}} = \Theta_{\Pi'}$, we use a parametrisation of discrete series characters provided by Harish-Chandra (see \cite[Lemma~44]{HAR5}). More precisely, it follows from \cite{TOM4} and a result of Harish-Chandra (see \cite[Theorem~2]{HAR4}) that the distribution $\Chc^{*}(\Theta_{\Pi})$ is given by locally integrable function $\Theta'_{\Pi}$ analytic on $\widetilde{\G'}^{\reg}$. Using \cite[Theorem~2.2]{TOM4}, we proved in Proposition \ref{ImportantProposition1} that the value of $\Theta'_{\Pi}$ on $\widetilde{\H'}^{\reg}$, where $\H'$ is the compact Cartan subgroup of $\G'$, is of the form:
\begin{equation*}
\Delta(\check{h}') \Theta'_{\Pi}(\check{p}(\check{h}')) = \C \sum\limits_{\sigma \in \mathscr{S}_{r} \times \mathscr{S}_{s}} \varepsilon(\sigma) (\sigma \check{h}')^{\lambda_{\Pi}}, \qquad (\check{h}' \in \check{\H}'^{\reg}),
\end{equation*}
where $\C \in \mathbb{R}$, $\check{\H}'$ is a double cover of $\H'$ (see Section \ref{SectionCauchyExplicit}) chosen such that $\rho' = \frac{1}{2}\sum\limits_{\alpha > 0} \alpha$ is analytic integral, $\check{p}$ is a map from $\check{\H}'$ into $\widetilde{\H'}$ (which is not an isomorphism of double covers in general), and $\lambda_{\Pi}$ is a linear form on $i\mathfrak{h}'$ depending on $\Pi$ which is conjugated to $\lambda$ under $\mathscr{S}_{r+s}$. Moreover, using results of \cite{TOM2}, we proved in Proposition \ref{ImportantProposition2} that 
\begin{equation*}
\sup\limits_{\tilde{g}' \in \widetilde{\G'}^{\reg}} |\D(\tilde{g}')|^{\frac{1}{2}} |\Theta'_{\Pi}(\tilde{g}')| < \infty,
\end{equation*}
where $\D$ is the Weyl denominator defined in Notations \ref{NotationsD}. Finally, applying \cite[Theorem~1.3]{TOM4} to our particular dual pair, it follows that $z\Chc^{*}(\Theta_{\Pi}) = \chi_{\lambda_{\Pi}}(z)\Chc^{*}(\Theta_{\Pi})$ for every $z \in \Z(\mathscr{U}(\mathfrak{g}'_{\mathbb{C}}))$, where $\chi_{\lambda_{\Pi}}$ is the character of $\Z(\mathscr{U}(\mathfrak{g}'_{\mathbb{C}}))$ obtained via the linear form $\lambda_{\Pi}$ as in Remark \ref{RemarkInfinitesimalCharacter} and then, using a result of Harish-Chandra (see \cite[Lemma~44]{HAR5}) we get that $\Chc^{*}(\Theta_{\Pi})$ is the character of a discrete series representations of $\widetilde{\G'}$ with Harish-Chandra parameter $\lambda_{\Pi}$. 

\noindent In Section \ref{CommutativeDiagram}, we prove, by using results of \cite{TOM8} (see also \cite{LI2}), that $\T(\Theta_{\Pi})$ is a well-defined $\widetilde{\G}\cdot\widetilde{\G'}$-invariant distribution on $\S^{*}(\W)$ and we get in Corollary \ref{CorollarySection7} the following equality:
\begin{equation*}
\T(\overline{\Theta_{\Pi}}) = \C_{\Pi \otimes \Pi'} \T(\overline{\Chc^{*}(\Theta_{\Pi})}),
\end{equation*}
where $\C_{\Pi \otimes \Pi'}$ is a constant depending on $\Pi$ and $\Pi'$. In particular, we can hope that the following diagram often commutes (up to a constant):
\begin{equation*}\label{GroupsDiagram}
\xymatrix{ \mathscr{D}'(\widetilde{\G})^{\widetilde{\G}} \ar[rr]^{\Chc^{*}} \ar[rd]_{\T} && \mathscr{D}'(\widetilde{\G'})^{\widetilde{\G'}} \ar[ld]^{\T} \\ & \S^{*}(\W)^{\widetilde{\G}\cdot\widetilde{\G'}} }
\end{equation*}
Moreover, according to Li's result (see \cite{LI2} or Section \ref{CommutativeDiagram}), $\Pi$ can be embedded in $\omega$ as a subrepresentation, and by projecting onto the $\nu \otimes \Pi'$-isotypic component (where $\nu$ is the lowest $\widetilde{\K}$-type of $\Pi$ as in Theorem \ref{TheoremDiscreteSeries}), we get (see Equation \eqref{EquationIntroduction}) the following equality:
\begin{equation*}
\Chc^{*}(\Theta_{\Pi})(\varphi) = d_{\Pi} \tr\left(\displaystyle\int_{\widetilde{\K}} \displaystyle\int_{\widetilde{\G}} \displaystyle\int_{\widetilde{\G'}} \overline{\Theta_{\Pi_{\nu}}(\tilde{k})} \overline{\Theta_{\Pi}(\tilde{g})} \varphi(\tilde{g}')\omega(\tilde{k}\tilde{g}\tilde{g}') d\tilde{g}'d\tilde{g}d\tilde{k}\right),
\end{equation*}
where $\varphi \in \mathscr{C}^{\infty}_{c}(\widetilde{\G'})$ and $d_{\Pi}$ is the formal degree of $\Pi$ (see Remark \ref{RemarkFormalDegree}).

\bigskip

\noindent \textbf{Acknowledgements: } The motivation of this paper comes from a talk given by Wee Teck Gan at the Representation Theory and Number theory seminar at NUS in 2019. I would like to thank Tomasz Przebinda for the many useful discussions during the preparation of this paper. This research was supported by the MOE-NUS AcRF Tier 1 grants R-146-000-261-114 and R-146-000-302-114.

\section{Howe correspondence and Cauchy-Harish-Chandra integral}

\label{SectionCauchyHarishChandra}

Let $\W$ be a finite dimensional vector space over $\mathbb{R}$ endowed with a non-degenerate, skew-symmetric, bilinear form $\langle\cdot, \cdot\rangle$. We denote by $\Sp(\W)$ the corresponding group of isometries, i.e.
\begin{equation*}
\Sp(\W) = \left\{g \in \GL(\W), \langle g(w), g(w')\rangle = \langle w, w' \rangle, \thinspace (\forall w, w' \in \W)\right\},
\end{equation*}
and by $\widetilde{\Sp}(\W)$ the metaplectic group as in \cite[Definition~4.18]{TOM6}: it's a connected two-fold cover of $\Sp(\W)$. We will denote by $\pi: \widetilde{\Sp}(\W) \to \Sp(\W)$ the corresponding covering map.

\noindent We say that a pair of subgroup $(\G, \G')$ of $\Sp(\W)$ is a dual pair if $\G$ is the centralizer of $\G'$ in $\Sp(\W)$ and vice-versa. The dual pair is said to be reductive if both $\G$ and $\G'$ act reductively on $\W$ and irreducible if we cannot find an orthogonal decomposition of $\W = \W_{1} \oplus \W_{2}$ where both $\W_{1}$ and $\W_{2}$ are $\G\cdot\G'$-invariant. One can easily prove that the preimages $\widetilde{\G} = \pi^{-1}(\G)$ and $\widetilde{\G'} = \pi^{-1}(\G')$ in $\widetilde{\Sp}(\W)$ form a dual pair in $\widetilde{\Sp}(\W)$.

\noindent Let $(\omega, \mathscr{H})$ be the Weil representation of $\widetilde{\Sp}(\W)$ corresponding to a fixed unitary character of $\mathbb{R}$ and $(\omega^{\infty}, \mathscr{H}^{\infty})$ be the corresponding smooth representation (see \cite[Section~4.8]{TOM6}). For a subgroup $\widetilde{\H}$ of $\widetilde{\Sp}(\W)$, we denote by $\mathscr{R}(\widetilde{\H}, \omega)$ the set of conjugacy classes of irreducible admissible representations $(\Pi, \mathscr{H}_{\Pi})$ of $\widetilde{\H}$ which can be realized as a quotient of $\mathscr{H}^{\infty}$ by a closed $\omega^{\infty}(\widetilde{\H})$-invariant subspace.

\noindent As proved by R. Howe (see \cite[Theorem~1]{HOW1}), for every reductive dual pair $(\G, \G')$ of $\Sp(\W)$, we have a one-to-one correspondence between $\mathscr{R}(\widetilde{\G}, \omega)$ and $\mathscr{R}(\widetilde{\G}', \omega)$ whose graph is $\mathscr{R}(\widetilde{\G} \cdot \widetilde{\G'}, \omega)$. More precisely, if $\Pi \in \mathscr{R}(\widetilde{\G}, \omega)$, we denote by $\N(\Pi)$ the intersection of all the closed $\widetilde{\G}$-invariant subspaces $\mathscr{N}$ such that $\Pi \approx \mathscr{H}^{\infty} / \mathscr{N}$. Then, the space $\mathscr{H}(\Pi) = \mathscr{H}^{\infty} / \N(\Pi)$ is a $\widetilde{\G} \cdot \widetilde{\G'}$-module; more precisely, $\mathscr{H}(\Pi) = \Pi \otimes \Pi'_{1}$, where $\Pi'_{1}$ is a $\widetilde{\G'}$-module, not irreducible in general, but Howe's duality theorem says that there exists a unique irreducible quotient $\Pi'$ of $\Pi'_{1}$ with $\Pi' \in \mathscr{R}(\widetilde{\G'}, \omega)$ and $\Pi \otimes \Pi' \in \mathscr{R}(\widetilde{\G}\cdot\widetilde{\G'}, \omega)$.

\noindent We will denote by $\theta: \mathscr{R}(\widetilde{\G}, \omega) \to \mathscr{R}(\widetilde{\G'}, \omega)$ the corresponding bijection.

\begin{nota}

We use here the notations of \cite{TOM6}. We denote by $\S^{*}(\W)$ the space of tempered distributions on $\W$ and by 
\begin{equation*}
\T: \widetilde{\Sp}(\W) \to \S^{*}(\W)
\end{equation*} 
the injection of $\widetilde{\Sp}(\W)$ into $\S^{*}(\W)$ (see \cite[Definition~4.23]{TOM6}). We denote by $\Sp^{c}(\W)$ the subset of $\Sp(\W)$ given by 
\begin{equation*}
\left\{g \in \Sp(\W), \det(g-1) \neq 0\right\}
\end{equation*}
and by $\widetilde{\Sp}^{c}(\W)$ its preimage in $\widetilde{\Sp}(\W)$. 

\end{nota}

\begin{rema}

As explained in \cite{TOM6}, for every $\tilde{g} \in \widetilde{\Sp}^{c}(\W)$, the distribution $\T(\tilde{g})$ is defined by $\T(\tilde{g}) = \Theta(\tilde{g}) \chi_{c(g)} \mu_{\W}$, where $\Theta$ is the character of the Weil representation $(\omega, \mathscr{H})$ defined in \cite[Definition~4.23]{TOM6}, $\chi_{c(g)}: \W \to \mathbb{C}$ is the function on $\W$ given by $\chi_{c(g)}(w) = \chi\left(\frac{1}{4}\langle (g+1)(g-1)^{-1}w, w\rangle\right)$ with $g = \pi(\tilde{g})$ and $\mu_{\W}$ is the appropriately normalized Lebesgue measure on $\W$.

\noindent The map $\T$ can be extended to $\widetilde{\Sp(\W)}$ and to $\mathscr{C}^{\infty}_{c}(\widetilde{\Sp}(\W))$ by 
\begin{equation*}
\T(\varphi) = \displaystyle\int_{\widetilde{\Sp}(\W)} \varphi(\tilde{g})\T(\tilde{g}) d\tilde{g}, \qquad (\varphi \in \mathscr{C}^{\infty}_{c}(\widetilde{\Sp}(\W))),
\end{equation*}
where $d\tilde{g}$ is the Haar measure on $\widetilde{\Sp}(\W)$. As proved in \cite[Section~4.8]{TOM6}, for every $\varphi \in \mathscr{C}^{\infty}_{c}(\widetilde{\Sp}(\W))$, the distribution $\T(\varphi)$ on $\W$ is given by a Schwartz function on $\W$ still denoted by $\T(\varphi)$, i.e.
\begin{equation*}
\T(\varphi)(\phi) = \displaystyle\int_{\W} \T(\varphi)(w)\phi(w) d\mu_{\W}(w), \qquad (\phi \in \S(\W)).
\end{equation*}

\label{RemarkDistributionT}

\end{rema}

We now recall the construction of the Cauchy-Harish-Chandra integral introduced by T. Przebinda in \cite{TOM1}. We denote by $\H_{i}$, $1 \leq i \leq n$ be a maximal set of non-conjugate Cartan subgroups of $\G$. As explained in \cite[Section~2.3.6]{WAL}, for every $1 \leq i \leq n$, the Cartan subgroup $\H_{i}$ can be decomposed as $\H_{i} = \T_{i}\A_{i}$, with $\T_{i}$ maximal compact in $\H_{i}$ ($\A_{i}$ is called the split part of $\H_{i}$). For $1 \leq i \leq n$, we denote by $\A'_{i}$ and $\A^{''}_{i}$ the subgroups of $\Sp(\W)$ given by $\A'_{i} = \C_{\Sp(\W)}(\A_{i})$ and $\A^{''}_{i} = \C_{\Sp(\W)}(\A^{'}_{i})$. As recalled in \cite[Section~1]{TOM1}, there exists an open and dense subset $\W_{\A^{''}_{i}}$, which is $\A^{''}_{i}$-invariant and such that $\A^{''}_{i} \setminus \W_{\A^{''}_{i}}$ is a manifold, endowed with a measure $\overline{dw}$ such that for every $\phi \in \mathscr{C}^{\infty}_{c}(\W)$ such that $\supp(\phi) \subseteq \W_{\A^{''}_{i}}$, 
\begin{equation*}
\displaystyle\int_{\W_{\A^{''}_{i}}} \phi(w)d\mu_{\W}(w) = \displaystyle\int_{\A^{''}_{i} \setminus \W_{\A^{''}_{i}}} \displaystyle\int_{\A^{''}_{i}} \phi(aw)da\overline{dw}.
\end{equation*}
For every $\varphi \in \mathscr{C}^{\infty}_{c}(\widetilde{\A^{'}_{i}})$, we denote by $\Chc(\varphi)$ the following integral:
\begin{equation*}
\Chc(\varphi) = \displaystyle\int_{\A^{''}_{i} \setminus \W_{\A^{''}_{i}}} \T(\varphi)(w) \overline{dw}.
\end{equation*}

\noindent According to Remark \ref{RemarkDistributionT}, the previous integral is well-defined and as proved in \cite[Lemma~2.9]{TOM1}, the corresponding map $\Chc: \mathscr{C}^{\infty}_{c}(\widetilde{\A^{'}_{i}}) \to \mathbb{C}$ defines a distribution on $\widetilde{\A^{'}_{i}}$.

\noindent For every $\tilde{h}_{i} \in \widetilde{\H_{i}}$, we denote by $\tau_{\tilde{h}_{i}}$ the map:
\begin{equation*}
\tau_{\tilde{h}_{i}}: \widetilde{\G'} \ni \tilde{g}' \to \tilde{h}\tilde{g}' \in \widetilde{\A^{'}_{i}}
\end{equation*}
and, for $\tilde{h}_{i}$ regular, by $\Chc_{\tilde{h}_{i}} = \tau^{*}_{\tilde{h}_{i}}(\Chc)$, where $\tau^{*}_{\tilde{h}_{i}}$ is the pull-back of $\tau_{\tilde{h}_{i}}$ as defined in \cite[Theorem~8.2.4]{HOR}. In particular, for every $\tilde{h}_{i} \in \widetilde{\H_{i}}^{\reg}$, $\Chc_{\tilde{h}_{i}}$ is a well-defined distribution on $\widetilde{\G'}$.

\section{Explicit formulas of $\Chc$ for unitary groups}

\label{SectionCauchyExplicit}

Let $\V = \mathbb{C}^{p+q}$ and $\V' = \mathbb{C}^{r+s}$ be two complex vector spaces endowed with non-degenerate bilinear forms $\left(\cdot, \cdot\right)$ and $\left(\cdot, \cdot\right)'$ respectively, with $\left(\cdot, \cdot\right)$ hermitian and $\left(\cdot, \cdot\right)'$ skew-hermitian, and let $(p, q)$ (resp. $(r, s)$) be the signature of $\left(\cdot, \cdot\right)$ (resp. $\left(\cdot, \cdot\right)'$). We assume that $p+q \leq r+s$. Let $\mathscr{B}_{\V} = \left\{e_{1}, \ldots, e_{n}\right\}$, $n=p+q$ (resp. $\mathscr{B}_{\V'} = \left\{e'_{1}, \ldots, e'_{n'}\right\}$, $n' = r+s$) be a basis of $\V$ (resp. $\V'$) such that $\Mat(\left(\cdot, \cdot\right), \mathscr{B}_{\V}) = \Id_{p, q}$ (resp. $\Mat(\left(\cdot, \cdot\right)', \mathscr{B}_{\V'}) = i\Id_{r, s}$).
Let $\G$ and $\G'$ be the corresponding group of isometries, i.e.
\begin{equation*}
\G = \G(\V, \left(\cdot, \cdot\right)) \approx \left\{g \in \GL(n, \mathbb{C}), \overline{g}^{t}\Id_{p, q}g = \Id_{p, q}\right\}, \qquad \G' = \G(\V', \left(\cdot, \cdot\right)') \approx \left\{g \in \GL(n', \mathbb{C}), \overline{g}^{t}\Id_{r, s}g = \Id_{r, s}\right\}.
\end{equation*}
where $\approx$ is a Lie group isomorphism.

\noindent Let $\H$ and $\H'$ be the diagonal compact Cartan subgroups of $\G$ and $\G'$ respectively. By looking at the action of $\H$ on the space $\V$, we get a decomposition of $\V$ of the form:
\begin{equation*}
\V = \V_{1} \oplus \ldots \oplus \V_{n},
\end{equation*}
where the spaces $\V_{a}$ given by $\V_{a} = \mathbb{C}ie_{a}$ are irreducible $\H$-modules. We denote by $\J$ the element of $\mathfrak{h}$ such that $\J = i\Id_{\V}$ and let $\J_{j} = i\E_{j, j}$. Similarly, we write
\begin{equation*}
\V' = \V'_{1} \oplus \ldots \oplus \V'_{n'},
\end{equation*}
with $\V'_{b} = \mathbb{C}ie'_{b}$, $\J'$ the element of $\mathfrak{h}'$ given by $\J' = i\Id_{\V'}$ and $\J'_{j} = i\E_{j, j}$. Let $\W = \Hom_{\mathbb{C}}(\V', \V)$ endowed with the symplectic form $\langle\cdot, \cdot\rangle$ given by:
\begin{equation*}
\langle w_{1}, w_{2}\rangle = \tr_{\mathbb{C}/\mathbb{R}}(w^{*}_{2}w_{1}), \qquad (w_{1}, w_{2} \in \W),
\end{equation*}
where $w^{*}_{2}$ is the element of $\Hom(\V, \V')$ satisfying:
\begin{equation*}
\left(w^{*}_{2}(v'), v\right) = \left(v', w_{2}(v)\right)' \qquad (v \in \V, v' \in \V').
\end{equation*}
The space $\W$ can be seen as a complex vector space by
\begin{equation}
\i w = \J \circ w \qquad (w \in \W).
\label{WComplexSpace}
\end{equation}
We define a double cover $\widetilde{\GL}_{\mathbb{C}}(\W)$ of the complex group $\GL_{\mathbb{C}}(\W)$ by:
\begin{equation*}
\widetilde{\GL}_{\mathbb{C}}(\W) = \left\{\tilde{g} = (g, \xi) \in \GL_{\mathbb{C}}(\W) \times \mathbb{C}^{\times}, \xi^{2} = \det(g)\right\}.
\end{equation*}
Because $p+q \leq r+s$, we get a natural embedding of $\mathfrak{h}_{\mathbb{C}}$ into $\mathfrak{h}'_{\mathbb{C}}$ and we denote by $\Z' = \G'^{\mathfrak{h}}$ the centralizer of $\mathfrak{h}$ in $\G'$. Let $\H'_{\mathbb{C}}$ be the complexification of $\H'$ in $\GL_{\mathbb{C}}(\W)$. In particular, $\H'_{\mathbb{C}}$ is isomorphic to 
\begin{equation*}
\mathfrak{h}'_{\mathbb{C}} / \left\{\sum\limits_{j=1}^{n'}2\pi x_{j}\J_{j}, x_{j} \in \mathbb{Z}\right\}.
\end{equation*}
We denote by $\check{\H}'_{\mathbb{C}}$ the connected two-fold cover of $\H_{\mathbb{C}}$ isomorphic to
\begin{equation*}
\mathfrak{h}'_{\mathbb{C}}/\left\{\sum\limits_{j=1}^{n'}2\pi x_{j}\J'_{j}, \sum\limits_{j=1}^{n'}x_{j} \in 2\mathbb{Z}, x_{j} \in \mathbb{Z}\right\}.
\end{equation*}
Let $p: \check{\H}'_{\mathbb{C}} \to \H_{\mathbb{C}}$ the corresponding covering map. If $\check{\H}'_{\mathbb{C}}$ is isomorphic to $\widetilde{\H}'_{\mathbb{C}}$, we may choose an isomorphism $\check{p}: \check{\H}'_{\mathbb{C}} \to \widetilde{\H}'_{\mathbb{C}}$ so that $p = \tilde{p} \circ \check{p}$. Otherwise, $\widetilde{\H}'_{\mathbb{C}}$ coincides with the direct product $\H_{\mathbb{C}, 1} \times \{\pm 1\}$. In this case, we can define $\check{p}: \check{\H}'_{\mathbb{C}} \to \tilde{\H}_{\mathbb{C}}$ to be the composition of $p$ with the inclusion $\H_{\mathbb{C}} \to \H_{\mathbb{C}} \times \{\pm 1\}$. Then, again $p = \tilde{p} \circ \check{p}$.

\begin{rema}
\begin{enumerate}
\item Let $\Psi' := \Psi'(\mathfrak{g}'_{\mathbb{C}}, \mathfrak{h}'_{\mathbb{C}})$ be a set of positive roots corresponding to $(\mathfrak{g}'_{\mathbb{C}}, \mathfrak{h}'_{\mathbb{C}})$, $\Psi'(\mathfrak{k}')$ be a the set of compact roots in $\Psi'$, where $\mathfrak{k}$ is the Lie algebra of $\K = \U(r) \times \U(s)$, and $\Psi'_{n}$ be the set of non-compact roots of $\Psi'$, i.e. $\Psi'_{n} = \Psi' \setminus \Psi'(\mathfrak{k})$. The reason why we are considering the double cover $\check{\H}'_{\mathbb{C}}$ of $\H'_{\mathbb{C}}$ is to make the form $\rho' = \frac{1}{2} \sum\limits_{\alpha \in \Psi'} \alpha$ analytic integral. For every analytic integral form $\gamma$ on $\mathfrak{h}'_{\mathbb{C}}$, we will denote by $\check{h}' \to \check{h}'^{\gamma}$ the corresponding character on $\H'_{\mathbb{C}}$.
\item We know that, up to conjugation, the number of Cartan subgroups in $\U(r, s)$ is $\min(r, s)+1$. Those Cartan subgroups can be parametrized by some particular subsets of $\Psi'_{n}$. Let $\Psi'^{\st}_{n}$ be the set of strongly orthogonal roots in $\Psi'$ (see \cite[Section~2]{SCH}).

\noindent For every $\alpha \in \Psi'^{\st}_{n}$, we denote by $c(\alpha)$ the element of $\G'_{\mathbb{C}}$ given by:
\begin{equation*}
c(\alpha) = \exp\left(\frac{\pi}{4}(X_{-\alpha} - X_{\alpha})\right).
\end{equation*}
where $X_{\alpha}$ (resp. $X_{-\alpha}$) is in $\mathfrak{g}'_{\mathbb{C}, \alpha}$ (resp. $\mathfrak{g}'_{\mathbb{C}, -\alpha}$) and normalized as in \cite[Equation~2.7]{SCH}. For every subset $\S$ of $\Psi'^{\st}_{n}$, we denote by $c(\S)$ the following element of $\G'_{\mathbb{C}}$ defined by
\begin{equation*}
c(\S) = \prod\limits_{\alpha \in \S} c(\alpha),
\end{equation*}
and let 
\begin{equation*}
\mathfrak{h}'(\S) = \mathfrak{g}' \cap \Ad(c(\S))(\mathfrak{h}'_{\mathbb{C}}).
\end{equation*}
We denote by $\H'(\S)$ the analytic subgroup of $\G'$ whose Lie algebra is $\mathfrak{h}'(\S)$. Then, $\H'(\S)$ is a Cartan subgroup of $\G'$ and one can prove that all the Cartan subgroups are of this form (up to conjugation).

\noindent For every $\S \subseteq \Psi'^{\st}_{n}$, we will denote by $\H'_{\S}$ the subgroup of $\H'_{\mathbb{C}}$ given by:
\begin{equation*}
\H'_{\S} = c(\S)^{-1}\H'(\S)c(\S).
\end{equation*}

\noindent Assume that $r \leq s$. Then, we define  $\Psi' = \left\{e_{i} - e_{j}, 1 \leq i < j \leq r+s\right\}$, where $e_{i}$ is the linear form on $\mathfrak{h}'_{\mathbb{C}} = \mathbb{C}^{r+s}$ given by $e_{i}(\lambda_{1}, \ldots, \lambda_{r+s}) = \lambda_{i}$. In this case, the set $\Psi'^{\st}_{n}$ is equal to $\{e_{t} - e_{r+t}, 1 \leq t \leq r\}$. In particular, $\H'(\emptyset) = \H'$ and if $\S_{t} = \{e_{1} - e_{r+1}, \ldots, e_{t} - e_{r+t}\}$, we get:
\begin{equation}
\H'_{\S_{t}} = \left\{h = \diag(e^{iX_{1} - X_{r+1}}, \ldots, e^{iX_{t} - X_{r+t}}, e^{iX_{t+1}}, \ldots, e^{iX_{r}}, e^{iX_{1} + X_{r+1}}, \ldots, e^{iX_{t} + X_{r+t}}, e^{iX_{r+t+1}}, \ldots, e^{iX_{r+s}}), X_{j} \in \mathbb{R}\right\}.
\label{DiagonalHSi}
\end{equation}

\label{ExampleUnitary}

\end{enumerate}
\end{rema}

\noindent Fix a subset $\S \in \Psi'^{\st}_{n}$. We denote by $\check{\H}'_{\S}$ the preimage of $\H'_{\S}$ in $\check{\H}'_{\mathbb{C}}$. For every $\varphi \in \mathscr{C}^{\infty}_{c}(\widetilde{\G'})$, we denote by $\mathscr{H}_{\S}\varphi$ the function of $\check{\H}'_{\S}$ defined by:
\begin{equation*}
\mathscr{H}_{\S}\varphi(\check{h}') = \varepsilon_{\Psi'_{\S, \mathbb{R}}}(\check{h}') \check{h}'^{\frac{1}{2} \sum\limits_{\alpha \in \Psi'} \alpha} \prod\limits_{\alpha \in \Psi'} (1 - \check{h}'^{-\alpha}) \displaystyle\int_{\G' / \H'(\S)} \varphi(g' c(\S) \check{p}(\check{h}') c(\S)^{-1} g'^{-1}) dg'\H'(\S) \qquad (\check{h}' \in \check{\H}'_{\S}),
\end{equation*}
where $\Psi'_{\S, \mathbb{R}}$ is the subset of $\Psi'$ consisting of real roots for $\H'_{\S}$ and $\varepsilon_{\Psi'_{\S, \mathbb{R}}}$ is the function defined on $\check{\H}'^{\reg}_{\S}$ by
\begin{equation*}
\varepsilon_{\Psi'_{\S, \mathbb{R}}}(\check{h}') = \sign\left(\prod\limits_{\alpha \in \Psi'_{\S, \mathbb{R}}} (1 - \check{h}'^{-\alpha})\right),
\end{equation*}

\noindent To simplify the notations, we denote by $\Delta_{\Psi'}(\check{h}')$ the quantity
\begin{equation*}
\Delta_{\Psi'}(\check{h}') = \check{h}'^{\frac{1}{2} \sum\limits_{\alpha \in \Psi'} \alpha} \prod\limits_{\alpha \in \Psi'} (1 - \check{h}'^{-\alpha}) \qquad (\check{h}' \in \check{\H}'_{\S}).
\end{equation*}
We define $\Delta_{\Phi'}$ similarly, where $\Phi' = -\Psi'$.

\begin{rema}

\begin{enumerate}
\item For every $\check{h}' \in \check{\H}'^{\reg}_{\S}$,
\begin{equation*}
\Delta_{\Phi'}(\check{h}')\Delta_{\Psi'}(\check{h}') = \prod\limits_{\alpha \in \Psi'^{+}}(1 - \check{h}'^{\alpha})(1 - \check{h}'^{-\alpha}) 
\end{equation*}
Note that if $\S = \emptyset$, we get for every $\alpha \in \Psi'$ and $\check{h}' \in \check{\H}'$ that $\overline{\check{h}'^{\alpha}} = \check{h}'^{-\alpha}$. In particular, $\Delta_{\Phi'}(\check{h}')\Delta_{\Psi'}(\check{h}') = \prod\limits_{\alpha \in \Psi'}(1 - \check{h}'^{\alpha})\overline{(1 - \check{h}'^{\alpha})} = \prod\limits_{\alpha \in \Psi'}|1 - \check{h}'^{\alpha}|^{2} = |\det(\Id - \Ad(\check{h}'))_{\mathfrak{g}'/\mathfrak{h}'}|$. Similarly, if $\S \neq \emptyset$, we get that for every $\alpha \in \Psi'$ and $\check{h}' \in \check{\H}'$, there exists $\beta \in \Phi'$, independant on $\check{h}'$, such that $\overline{\check{h}'^{\alpha}} = \check{h}'^{\beta}$. In particular, we get $\Delta_{\Phi'}(\check{h}')\Delta_{\Psi'}(\check{h}') = \prod\limits_{\alpha \in \Psi'}|1 - \check{h}'^{\alpha}|^{2}$.

\noindent For every $\check{h}' \in \check{\H}'_{\S}$, we denote by $|\Delta_{\G'}(\check{h}')|^{2} = \Delta_{\Phi'}(\check{h}')\Delta_{\Psi'}(\check{h}')$.
\item One can easily check that two Cartan subalgebras $\mathfrak{h}'(\S_{1})$ and $\mathfrak{h}'(\S_{2})$, with $\S_{1}, \S_{2} \subseteq \Psi'^{\st}_{n}$, are conjugate if and only if there exists an element of $\sigma \in \mathscr{W}$ sending $\S_{1} \cup (-\S_{1})$ onto $\S_{2} \cup (-\S_{2})$ (see \cite[Proposition~2.16]{SCH}). 
\end{enumerate}

\label{DeltaSquare}

\end{rema}

\noindent The Weyl's integration formula can be written with the previous notations as follows

\begin{prop}[Weyl's Integration Formula]

For every $\varphi \in \mathscr{C}^{\infty}_{c}(\widetilde{\G'})$, we get:
\begin{equation}
\displaystyle\int_{\widetilde{\G'}} \varphi(\tilde{g}') d\tilde{g}' = \sum\limits_{\S \in \Psi'^{\st}_{n}} m_{\S} \displaystyle\int_{\check{\H}'_{\S}} \varepsilon_{\Psi'_{\S, \mathbb{R}}}(\check{h}') \Delta_{\Phi'}(\check{h}') \mathscr{H}_{\S}\varphi(\check{h}')d\check{h}'.
\label{Equation3}
\end{equation}
where $m_{\S}$ are complex numbers. Here, the subsets $\S$ of $\Psi'^{\st}_{n}$ are defined up to equivalence (see Remark \ref{DeltaSquare}).

\end{prop}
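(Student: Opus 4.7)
The plan is to derive the stated identity from the classical Weyl integration formula on $\widetilde{\G'}$ by transporting every Cartan integral back to the ``base'' torus $\H'_{\mathbb C}$ via the Cayley transforms $c(\S)$ and then lifting to the double cover $\check\H'_{\S}$. The flexibility built into the proposition (the constants $m_\S$ are allowed to be arbitrary) means that the core of the argument is simply a careful change of variables, and no precise normalisation constant has to be pinned down.

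First, I would recall the standard Weyl integration formula on $\widetilde{\G'}$: denoting by $\widetilde{\H'(\S)}$ the preimages of the representatives $\H'(\S)$ of the conjugacy classes of Cartan subgroups (which by Remark~\ref{DeltaSquare}(2) are indexed by the equivalence classes of subsets $\S\subseteq\Psi'^{\st}_{n}$), one has
\begin{equation*}
\int_{\widetilde{\G'}}\varphi(\tilde g')\,d\tilde g'
=\sum_{\S}\frac{1}{|\mathscr{W}(\H'(\S))|}\int_{\widetilde{\H'(\S)}}|\det(1-\Ad(\tilde h'))_{\mathfrak{g}'/\mathfrak{h}'(\S)}|\int_{\G'/\H'(\S)}\varphi(g'\tilde h'g'^{-1})\,dg'\H'(\S)\,d\tilde h'.
\end{equation*}

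Next, for each $\S$ I would change variables in the Cartan integral using the Cayley transform $c(\S)$, which conjugates $\H'_{\S}$ onto $\H'(\S)$, and then lift the resulting integral from $\widetilde{\H'_{\S}}$ to the double cover $\check\H'_{\S}$ via the map $\check p$. The latter step is what forces the double cover into the picture: only on $\check\H'_{\S}$ is the half-sum $\rho'=\tfrac12\sum_{\alpha\in\Psi'}\alpha$ analytic integral, so that the Weyl denominator
$\Delta_{\Psi'}(\check h')=\check h'^{\rho'}\prod_{\alpha\in\Psi'}(1-\check h'^{-\alpha})$
is a genuine function on $\check\H'_{\S}$. Both the Jacobian of $c(\S)$, the factor $1/|\mathscr{W}(\H'(\S))|$ and the covering degree of $\check\H'_{\S}\to\H'_{\S}$ are then absorbed into the constants $m_\S$.

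The heart of the computation is then to rewrite the Jacobian $|\det(1-\Ad(\tilde h'))_{\mathfrak{g}'/\mathfrak{h}'(\S)}|$ in terms of $\Delta_{\Phi'}$ and $\Delta_{\Psi'}$. By Remark~\ref{DeltaSquare}(1) one has $|\Delta_{\G'}(\check h')|^{2}=\Delta_{\Phi'}(\check h')\Delta_{\Psi'}(\check h')=|\det(1-\Ad(\check h'))_{\mathfrak{g}'/\mathfrak{h}'}|$, and the Cayley transform preserves this absolute value. Multiplying and dividing by $\varepsilon_{\Psi'_{\S,\mathbb{R}}}(\check h')$ (whose square is $1$) and combining a factor $\varepsilon_{\Psi'_{\S,\mathbb{R}}}\Delta_{\Psi'}$ with the orbital integral produces precisely $\mathscr{H}_{\S}\varphi(\check h')$, leaving the remaining $\varepsilon_{\Psi'_{\S,\mathbb{R}}}\Delta_{\Phi'}$ outside as in the stated formula.

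The only delicate point, and what I would flag as the main obstacle, is bookkeeping around the two-fold cover: one must check that after all these substitutions the integrand $\varepsilon_{\Psi'_{\S,\mathbb{R}}}(\check h')\Delta_{\Phi'}(\check h')\mathscr{H}_{\S}\varphi(\check h')$ is genuinely a function on $\check\H'_{\S}$ (i.e.\ insensitive to the non-trivial deck transformation after the factor $2$ coming from the covering degree is absorbed into $m_\S$), and that the sign corrections from the real roots match between the two sides. Once this is verified, the equality \eqref{Equation3} follows by summing over the equivalence classes of $\S$ in $\Psi'^{\st}_{n}$.
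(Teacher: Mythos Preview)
Your sketch is essentially correct and is indeed how this version of the Weyl integration formula is derived: start from the classical formula, conjugate each Cartan back to $\H'_{\mathbb C}$ via $c(\S)$, lift to $\check\H'_{\S}$ so that $\rho'$ becomes analytic integral, and use Remark~\ref{DeltaSquare}(1) to split the Jacobian as $\Delta_{\Phi'}\Delta_{\Psi'}$, absorbing all normalisation constants into $m_{\S}$. The paper itself does not give a proof at all; it simply refers to \cite[Section~2,~Page~3830]{TOM4}, where this exact reformulation is carried out. So your proposal is not an alternative route but rather a reconstruction of the argument that the paper chose to outsource, and it matches what one finds in that reference.
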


\begin{proof}

See \cite[Section~2,~Page~3830]{TOM4}.

\end{proof}

\begin{rema}

In particular, if we fix $\S \subseteq \Psi'^{\st}_{n}$ and $\varphi \in \mathscr{C}^{\infty}_{c}(\widetilde{\G'})$ such that $\supp(\varphi) \subseteq \widetilde{\G'}\cdot\widetilde{\H'}(\S)^{\reg}$, the previous formula can be written as follow:
\begin{equation*}
\displaystyle\int_{\widetilde{\G'}} \varphi(\tilde{g}') d\tilde{g}' = m_{\S} \displaystyle\int_{\check{\H}'_{\S}} \varepsilon_{\Psi'_{\S, \mathbb{R}}}(\check{h}') \Delta_{\Phi'}(\check{h}') \mathscr{H}_{\S}\varphi(\check{h}')d\check{h}'.
\end{equation*}
\label{RemarkWeylIntegration2}
\end{rema}

\noindent Let $\H_{\mathbb{C}}, \G'_{\mathbb{C}} \subseteq \GL_{\mathbb{C}}(\W)$ the complexifications of $\H$ and $\G'$. We denote by $\G'^{\i}_{\mathbb{C}}$ the subgroup of $\G'_{\mathbb{C}}$ consisting of elements commuting with the element $\i$ introduced in Equation \eqref{WComplexSpace}.

\noindent As proved in \cite[Section~2]{TOM4}, the character $\Theta$ defined in \cite[Definition~4.23]{TOM6} extends to a rational function on $\widetilde{\H}_{\mathbb{C}}\cdot\widetilde{\G'^{\i}_{\mathbb{C}}}$ given by
\begin{equation*}
\Theta(\tilde{h}\tilde{g}') = (-1)^{u} \cfrac{\det^{\frac{1}{2}}(\tilde{h}\tilde{g}')}{\det(1 - hg')} \qquad (\tilde{h} \in \widetilde{\H}_{\mathbb{C}}, \tilde{g}' \in \widetilde{\G'^{\i}_{\mathbb{C}}}),
\end{equation*}
where $2u$ is the maximal dimension of a real subspace of $\W$ on which the symmetric form $\langle \J\cdot, \cdot\rangle$ is negative definite. More precisely, according to \cite[Proposition~2.1]{TOM4}, we get:

\begin{prop}

\label{PropositionTheta}

For every $\check{h} \in \check{\H}_{\mathbb{C}}$ and $\check{h}' \in \check{\H}'_{\mathbb{C}}$, we get:
\begin{equation*}
\det^{\frac{k}{2}}(\check{h})_{\W^{\mathfrak{h}}} \Delta_{\Psi}(\check{h})\Theta(\check{p}(\check{h})\check{p}(\check{h}'))\Delta_{\Phi}(\check{h}') = \sum\limits_{\sigma \in \mathscr{W}(\H'_{\mathbb{C}})} (-1)^{u+\alpha} \cfrac{\sign(\sigma)}{|\mathscr{W}(\Z'_{\mathbb{C}}, \H'_{\mathbb{C}})|}\cfrac{\det^{-\frac{k}{2}}(\sigma^{-1}(\check{h}'))_{\W^{\mathfrak{h}}} \Delta_{\Phi(\Z')}(\sigma^{-1}(\check{h}'))}{\det(1-p(h)p(h'))_{\sigma\W^{\mathfrak{h}}}},
\end{equation*}
where $\alpha \in \{0, -1\}$ depends only on the choice of the positive roots $\Psi$ and $\Phi'$, $k \in \{0, -1\}$ is defined by 
\begin{equation*}
k = \begin{cases} -1 & \text{ if } n'-n \in 2\mathbb{Z} \\ 0 & \text{ otherwise } \end{cases}
\end{equation*}
and $\W^{\mathfrak{h}}$ is the set of elements of $\W$ commuting with $\mathfrak{h}$.

\end{prop}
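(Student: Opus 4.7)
The plan is to start from the explicit rational form of the Weil character,
$$\Theta(\check{p}(\check{h})\check{p}(\check{h}')) = (-1)^{u} \frac{\det^{1/2}(\check{p}(\check{h})\check{p}(\check{h}'))}{\det(1 - p(h)p(h'))_{\W}},$$
valid on $\widetilde{\H}_{\mathbb{C}} \cdot \widetilde{\G'^{\i}_{\mathbb{C}}}$, and to expand both numerator and denominator through the joint weight decomposition of $\W = \Hom_{\mathbb{C}}(\V', \V)$ under $\H_{\mathbb{C}} \times \H'_{\mathbb{C}}$. Since $\V = \bigoplus_{j=1}^{n} \V_{j}$ and $\V' = \bigoplus_{b=1}^{n'} \V'_{b}$ with each $\V_{j}$ and $\V'_{b}$ one-dimensional, one has $\W = \bigoplus_{j, b} \Hom_{\mathbb{C}}(\V'_{b}, \V_{j})$, and on the $(j, b)$-piece $(h, h')$ acts by a single character depending only on the weights $e_{j}$ and $e_{b}$. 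Consequently $\det(1 - p(h)p(h'))_{\W} = \prod_{j, b}(1 - \check{h}^{e_{j}}\check{h}'^{-e_{b}})$, and the left-hand side becomes a rational expression in the variables $\check{h}^{e_{j}}$ and $\check{h}'^{e_{b}}$. The subspace $\W^{\mathfrak{h}}$ is precisely the direct sum of those $(j, b)$-pieces for which the embedding $\mathfrak{h} \hookrightarrow \mathfrak{h}'$ identifies $e_{j}$ with $e_{b}$, which singles out an explicit matching of $n$ coordinates of $\H'$ with those of $\H$.

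Once this decomposition is in place, I would multiply both sides by $\Delta_{\Psi}(\check{h}) \Delta_{\Phi}(\check{h}')$ and apply a Cauchy-type symmetrisation identity. The crucial observation is that the rational function
$$\frac{\Delta_{\Psi}(\check{h}) \Delta_{\Phi}(\check{h}')}{\prod_{j, b}(1 - \check{h}^{e_{j}}\check{h}'^{-e_{b}})}$$
is in fact a finite sum of characters. Expanding $\Delta_{\Psi}(\check{h}) = \sum_{w \in \mathscr{W}(\H_{\mathbb{C}})} \sign(w) \check{h}^{w\rho}$ and the analogous expression for $\Delta_{\Phi}(\check{h}')$, and then antisymmetrising over $\mathscr{W}(\H'_{\mathbb{C}})$, reorganises it into a sum over $\sigma \in \mathscr{W}(\H'_{\mathbb{C}})$ of terms in which exactly $n$ of the $n'$ coordinates of $\mathfrak{h}'$ are matched with $\mathfrak{h}$. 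These matched directions constitute the subspace $\sigma\W^{\mathfrak{h}}$ appearing in the denominator of the right-hand side, while the $n' - n$ unmatched directions produce the Weyl denominator $\Delta_{\Phi(\Z')}(\sigma^{-1}(\check{h}'))$ of the centraliser $\Z' = \G'^{\mathfrak{h}}$. The division by $|\mathscr{W}(\Z'_{\mathbb{C}}, \H'_{\mathbb{C}})|$ accounts for the redundancy that two Weyl elements differing by an element of this small Weyl group yield the same matching and hence the same summand.

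The main obstacle I expect is not the analytic content, which is essentially the Cauchy identity in disguise, but the bookkeeping of signs and half-integer exponents. The integer $k \in \{0, -1\}$ records the parity of $n' - n$ and encodes how the factor $\det^{1/2}(\check{p}(\check{h})\check{p}(\check{h}'))$ distributes between $\W^{\mathfrak{h}}$ and its $\sigma$-transported complement; this is precisely why the double cover $\check{\H}'_{\mathbb{C}}$ is indispensable, as it makes the characters $\check{h}^{k/2}$ and $\check{h}'^{k/2}$ well defined even when $k$ is odd. The sign $(-1)^{u}$ comes intrinsically from the definition of $\Theta$ on $\widetilde{\Sp}^{c}(\W)$, while $(-1)^{\alpha}$ with $\alpha \in \{0, -1\}$ records the discrepancy between the half-sums $\rho, \rho'$ attached to $\Psi, \Phi'$ and those that emerge naturally from the antisymmetrisation. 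Verifying that all these signs align consistently for any choice of positive systems is the delicate step; once it is settled, comparing coefficients of Weyl-group basis elements yields the claimed identity.
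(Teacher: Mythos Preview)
The paper does not supply its own proof of this proposition: it is quoted verbatim from \cite[Proposition~2.1]{TOM4}, as the sentence immediately preceding the statement makes explicit. So there is no in-paper argument to compare against.

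That said, your sketch is the natural strategy and is essentially what the cited reference carries out. The weight decomposition $\W = \bigoplus_{j,b} \Hom_{\mathbb{C}}(\V'_{b}, \V_{j})$ factorises $\det(1 - p(h)p(h'))_{\W}$ as a product over pairs $(j,b)$, and the identity you are aiming for is then a Cauchy-determinant identity of the type
\[
\frac{\prod_{i<j}(x_{i}-x_{j})\prod_{a<b}(y_{b}-y_{a})}{\prod_{i,b}(x_{i}-y_{b})}
\;=\;
\sum_{\sigma} \sign(\sigma)\,\frac{(\text{residual Vandermonde in the unmatched } y\text{-variables})}{\prod_{i}(x_{i}-y_{\sigma(i)})},
\]
with the sum running over injections of an $n$-set into an $n'$-set, rewritten as a sum over $\mathscr{W}(\H'_{\mathbb{C}})$ modulo the stabiliser $\mathscr{W}(\Z'_{\mathbb{C}}, \H'_{\mathbb{C}})$. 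Your identification of $\Delta_{\Phi(\Z')}$ as the residual Vandermonde coming from the $n'-n$ unmatched coordinates, and of the $|\mathscr{W}(\Z'_{\mathbb{C}}, \H'_{\mathbb{C}})|^{-1}$ as correcting the overcount, is exactly right. You are also correct that the only genuinely delicate point is the bookkeeping of the half-integer powers and the global sign $(-1)^{\alpha}$; this is where the passage to the covers $\check{\H}_{\mathbb{C}}$, $\check{\H}'_{\mathbb{C}}$ and the parity integer $k$ enter. None of this is conceptually hard, but turning your outline into a proof would require writing out the Cauchy identity in the rectangular case $n \leq n'$ and tracking the normalisations of $\det^{1/2}$ under $\check{p}$ explicitly, which you have not done.
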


\begin{rema}

One can easily check that the space $\W^{\mathfrak{h}}$ is given by 
\begin{equation*}
\W^{\mathfrak{h}} = \sum\limits_{i=1}^{n} \Hom(\V'_{i}, \V_{i}).
\end{equation*}

\end{rema}

\noindent For every $\S \subseteq \Psi'^{\st}_{n}$, we denote by $\underline{\S}$ the subset of $\{1, \ldots, r+s\}$ given by $\underline{\S} = \left\{j, \exists \alpha \in \S \text{ such that } \alpha(\J'_{j}) \neq 0\right\}$. Let $\sigma \in \mathscr{S}_{n'}$ and $\S \subseteq \Psi'^{\st}_{n}$, we denote by $\Gamma_{\sigma, \S}$ the subset of $\mathfrak{h}'$ defined as
\begin{equation}
\Gamma_{\sigma, \S} = \left\{Y \in \mathfrak{h}', \langle Y\cdot, \cdot\rangle_{\sigma\W^{\mathfrak{h}} \cap \sum\limits_{j \notin \underline{\S}} \Hom(\V'_{j}, \V)} > 0\right\},
\label{SpaceEGamma}
\end{equation}
and let $\E_{\sigma, \S} = \widetilde{\exp}(i\Gamma_{\sigma, \S})$ the corresponding subset of $\widetilde{\H}'_{\mathbb{C}}$, where $\widetilde{\exp}$ is a choice of exponential map $\widetilde{\exp}: \mathfrak{h}'_{\mathbb{C}} \to \widetilde{\H}'_{\mathbb{C}}$ obtained by choosing an element $\widetilde{1}$ in $\pi^{-1}\left\{1\right\}$.

\begin{theo}

\label{TheoremTheta}

For every $\check{h} \in \check{\H} = \check{\H}_{\emptyset}$ and $\varphi \in \mathscr{C}(\widetilde{\G'})$, we get:
\begin{equation*}
\det^{\frac{k}{2}}(\check{h})_{\W^{\mathfrak{h}}} \Delta_{\Psi}(\check{h}) \displaystyle\int_{\widetilde{\G'}} \Theta(\check{p}(\check{h})\tilde{g}') \varphi(\tilde{g}')d\tilde{g}' =  
\end{equation*}
\begin{equation*}
\sum\limits_{\sigma \in \mathscr{W}(\H'_{\mathbb{C}})} \sum\limits_{\S \subseteq \Psi'^{\st}_{n}} \M_{\S}(\sigma)\lim\limits_{\underset{r \to 1}{r \in \E_{\sigma, \S}}} \displaystyle\int_{\check{\H}'_{\S}}\cfrac{\det^{-\frac{k}{2}}(\sigma^{-1}(\check{h}'))_{\W^{\mathfrak{h}}} \Delta_{\Phi'(\Z')}(\sigma^{-1}(\check{h}'))}{\det(1-p(h)rp(h'))_{\sigma\W^{\mathfrak{h}}}} \varepsilon_{\Phi'_{\S, \mathbb{R}}}(\check{h}') \mathscr{H}_{\S}(\varphi)(\check{h}') d\check{h}',
\end{equation*}
where $\M_{\S}(\sigma) = \cfrac{(-1)^{u}\varepsilon(\sigma)m_{\S}}{|\mathscr{W}(\Z'_{\mathbb{C}}, \H'_{\mathbb{C}})|}$.

\end{theo}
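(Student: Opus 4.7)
The plan is to combine Weyl's integration formula \eqref{Equation3} with the rational identity of Proposition \ref{PropositionTheta}, the main technical point being a regularisation along the cones $\E_{\sigma,\S}$ that reconciles a pointwise-singular sum with an absolutely convergent integral.

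I begin by applying the Weyl integration formula to the function $\tilde g'\mapsto\Theta(\check p(\check h)\tilde g')\varphi(\tilde g')$. Because $\Theta$ is a class function on $\widetilde{\Sp}(\W)$ and $\check p(\check h)$ commutes with every element of $\widetilde{\G'}$ by the dual-pair property, the factor $\Theta(\check p(\check h)\cdot)$ can be pulled out of the orbital integral defining $\mathscr H_{\S}$, giving
\begin{equation*}
\mathscr H_{\S}\bigl[\Theta(\check p(\check h)\cdot)\varphi\bigr](\check h')=\Theta\bigl(\check p(\check h)\,c(\S)\check p(\check h')c(\S)^{-1}\bigr)\,\mathscr H_{\S}\varphi(\check h').
\end{equation*}
A further conjugation by $c(\S)\in\G'_{\mathbb C}$, permissible again by the dual-pair property, rewrites the character factor as $\Theta(\check p(\check h)\check p(\check h'))$, which is exactly the object controlled by Proposition \ref{PropositionTheta}. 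Multiplying through by $\det^{k/2}(\check h)_{\W^{\mathfrak h}}\Delta_{\Psi}(\check h)$ and substituting the explicit expansion of that proposition produces the finite sum over $\sigma\in\mathscr W(\H'_{\mathbb C})$, with the combinatorial constants collected into $\M_{\S}(\sigma)=(-1)^{u}\varepsilon(\sigma)m_{\S}/|\mathscr W(\Z'_{\mathbb C},\H'_{\mathbb C})|$, and the sign $\varepsilon_{\Phi'_{\S,\mathbb R}}$ arising from $\varepsilon_{\Psi'_{\S,\mathbb R}}$ together with the $\Psi'\leftrightarrow\Phi'$ swap between Weyl integration and the proposition.

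The principal obstacle is that Proposition \ref{PropositionTheta} is a rational identity: while the full $\sigma$-sum is smooth on the regular set, each individual summand has a pole along $\{\det(1-p(h)p(h'))_{\sigma\W^{\mathfrak h}}=0\}\cap\check\H'_{\S}$, preventing a naive term-by-term interchange of the finite Weyl-group sum with the integral. This is resolved by the shift $\check p(\check h')\mapsto r\,\check p(\check h')$ with $r=\widetilde{\exp}(iY)\in\E_{\sigma,\S}$: the positivity condition $\langle Y\cdot,\cdot\rangle>0$ on $\sigma\W^{\mathfrak h}\cap\sum_{j\notin\underline{\S}}\Hom(\V'_j,\V)$ from Equation \eqref{SpaceEGamma} forces the spectral radius of $p(h)\,r\,p(h')$ on that subspace to stay strictly less than one, so that $\det(1-p(h)rp(h'))_{\sigma\W^{\mathfrak h}}$ is bounded away from zero on $\check\H'_{\S}$ and each $\sigma$-summand becomes absolutely integrable.

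Finally, exploiting the compact support of $\mathscr H_{\S}\varphi$ on $\check\H'_{\S}$, I would invoke dominated convergence to ensure that the $r\to 1$ limit can be taken after integration and then pushed through the finite Weyl-group sum: the sum of the regularised integrals reassembles into the original convergent integral, and summing over $\S\subseteq\Psi'^{\st}_n$ as prescribed by the Weyl integration formula produces the announced identity.
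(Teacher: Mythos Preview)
The paper does not supply its own proof of this theorem; it is quoted as a known result (it is \cite[Theorem~2.2]{TOM4}, as the introduction indicates when invoking it for Proposition~\ref{ImportantProposition1}). Your outline correctly identifies the two structural ingredients---Weyl integration on $\widetilde{\G'}$ followed by the rational expansion of Proposition~\ref{PropositionTheta}---and correctly isolates the $\E_{\sigma,\S}$-regularisation as the device that gives each $\sigma$-term a separate meaning.

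The gap is in your last paragraph. Dominated convergence does not justify the passage $r\to 1$: for fixed $\sigma$ the integrand $\det(1-p(h)rp(h'))_{\sigma\W^{\mathfrak h}}^{-1}$ is bounded on $\supp\mathscr H_{\S}\varphi$ only while $r$ stays strictly inside $\E_{\sigma,\S}$, and that bound degenerates as $r$ approaches the unit. The individual limits exist for a different reason: each integral is the boundary value of a function holomorphic in $r$ on the tube over $\Gamma_{\sigma,\S}$, and the limit is a distributional boundary value---computed in practice by residues, exactly as the paper does later via Lemma~\ref{LemmaComplexIntegrals}. A second, related issue is that the cones $\E_{\sigma,\S}$ depend on $\sigma$, so one cannot pick a single $r$ valid for all terms, sum, and then let $r\to1$; the statement that the various boundary values reassemble into the smooth integral on the left is a Plemelj/edge-of-the-wedge type assertion, not a consequence of compact support plus dominated convergence. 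That analytic step is the real content of \cite{TOM4}, and your sketch stops just short of it.
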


\noindent The theorem \ref{TheoremTheta} tells us how to compute $\Chc_{\tilde{h}}$ for an element $\tilde{h}$ in the compact Cartan $\H = \H(\emptyset)$. Using \cite{TOM2}, it follows that the value of $\Chc$ on the other Cartan subgroups can be computed explicitely by knowing how to do it for the compact Cartan (we will assume, without loss of generality, that $p \leq q$, in particular, the number of Cartan subgroups of $\G$, up to conjugation, is $p+1$).

\begin{nota}

\noindent For every $i \in \max(p, \min(r, s))$, we define the set $\S_{i}$ 
\begin{equation*}
\S_{i} = \begin{cases} \left\{e_{1} - e_{\alpha+1}, \ldots, e_{i} - e_{\alpha+i}\right\} & \text{ if } r \leq s \\ \left\{e_{1} - e_{\beta+1}, \ldots, e_{i} - e_{\beta+i}\right\} & \text{ otherwise } \end{cases},
\end{equation*}
where $\alpha = \begin{cases} p & \text{ if } r \leq p \\ r & \text{ otherwise } \end{cases}$ and $\beta = \begin{cases} p & \text{ if } s \leq p \\ s & \text{ otherwise } \end{cases}$.

\label{NotationsCartanSI}

\end{nota}

\noindent For every $i \in [|0, p|]$ and $j \in [|0, \min(r, s)|]$, we denote by $\H(\S_{i})$ and $\H'(\S_{j})$ the Cartan subgroups of $\G$ and $\G'$ respectively and let $\H(\S_{i}) = \T(\S_{i})\A(\S_{i})$ and $\H'(\S_{j}) = \T'(\S_{j})\A'(\S_{j})$ be the decompositions of $\H(\S_{i})$ and $\H'(\S_{j})$ as in \cite[Section~2.3.6]{WAL}. In particular, $\H(\S_{k}) = \H'(\S_{k})$ for every $k \in [|0, \min(p, \min(r, s))|]$.

\noindent To simplify, we assume that $r \leq s$. We denote by $\V_{0, i}$ the subspace of $\V$ on which $\A(\S_{i})$ acts trivially and by $\V_{1, i}$ the orthogonal complement of  $\V_{0, i}$ in $\V$. Let  $\V_{1, i} = \X_{i} \oplus \Y_{i}$ be a complete polarization of $\V_{1, i}$. We assume that we have a natural embedding of $\V_{1, i}$ into $\V'$ such that $\X_{i} \oplus \Y_{i}$ is a complete polarization with respect to $\left(\cdot, \cdot\right)'$ (i.e $i \leq r$). Let $\U_{i}$ be the orthogonal complement of $\V_{1, i}$ in $\V'$; in particular, we get a natural embedding:
\begin{equation*}
\GL(\X_{i}) \times \G(\U_{i}) \subseteq \G' = \U(r, s).
\end{equation*}
We denote by $\T_{1}(\S_{i})$ the maximal subgroup of $\T(\S_{i})$ which acts trivially on $\V_{0, i}$ and let $\T_{2}(\S_{i})$ the subgroup of $\T(\S_{i})$ such that $\T(\S_{i}) = \T_{1}(\S_{i}) \times \T_{2}(\S_{i})$ with $\T_{2}(\S_{i}) \subseteq \G(\V_{0, i})$. In particular,
\begin{equation}
\H(\S_{i}) = \T_{1}(\S_{i}) \times \A(\S_{i}) \times \T_{2}(\S_{i}).
\label{DecompositionHSi}
\end{equation}
Similarly, we get a decomposition of $\H'(\S_{i})$ os the form:
\begin{equation}
\H'(\S_{i}) = \T'_{1}(\S_{i}) \times \A'(\S_{i}) \times \T'_{2}(\S_{i}).
\label{DecompositionHSi2}
\end{equation}

\noindent Let $\eta(\S_{i})$ and $\eta'(\S_{i})$ be the nilpotent Lie subalgebras of $\mathfrak{u}(p, q)$ and $\mathfrak{u}(r, s)$ respectively given by
\begin{equation*}
\eta(\S_{i}) = \Hom(\X_{i}, \V_{0, i}) \oplus \Hom(\X_{i}, \Y_{i}) \cap \mathfrak{u}(p, q), \qquad \eta'(\S_{i}) = \Hom(\U_{i}, \X_{i}) \oplus \Hom(\X_{i}, \Y_{i}) \cap \mathfrak{u}(r, s).
\end{equation*}
We will denote by $\W_{0, i}$ the subspace of $\W$ defined by $\Hom(\U_{i}, \V_{0, i})$ and by $\P(\S_{i})$ and $\P'(\S_{i})$ the parabolic subgroups of $\G$ and $\G'$ respectively whose Levi factors $\L(\S_{i})$ and $\L'(\S_{i})$ are given by 
\begin{equation*}
\L(\S_{i}) = \GL(\X_{i}) \times \G(\V_{0, i}) \qquad \L'(\S_{i}) = \GL(\X_{i}) \times \G(\U_{i}),
\end{equation*}
and by $\N(\S_{i}) := \exp(\eta(\S_{i}))$ and $\N'(\S_{i}) := \exp(\eta'(\S_{i}))$ the unipotent radicals of $\P(\S_{i})$ and $\P'(\S_{i})$ respectively.

\begin{rema}

One can easily check that the forms on $\V_{0, i}$ and $\U_{i}$ have signature $(p-i, q-i)$ and $(r-i, s-i)$ respectively.

\end{rema}

\noindent As proved in \cite[Theorem~0.9]{TOM2}, for every $\tilde{h} = \tilde{t}_{1}\tilde{a}\tilde{t}_{2} \in \widetilde{\H}(\S_{i})^{\reg}$ (using the decomposition of $\H(\S_{i})$ given in Equation \eqref{DecompositionHSi}) and $\varphi \in \mathscr{C}^{\infty}_{c}(\widetilde{\G'})$, we get:
\begin{equation}
|\det(\Ad(\tilde{h}) - \Id)_{\eta(\S_{i})}| \Chc_{\tilde{h}}(\varphi) = 
\label{ChcNonCompact}
\end{equation}
\begin{equation*}
\C\d_{\S_{i}}(\tilde{h}) \varepsilon(\tilde{t}_{1}\tilde{a}) \displaystyle\int_{\GL(\X_{i})/\T_{1}(\S_{i}) \times \A(\S_{i})} \displaystyle\int_{\widetilde{\G}(\U_{i})} \varepsilon(\tilde{t}_{1} \tilde{a} \tilde{y}) \Chc_{\W_{0, i}}(\tilde{t}_{2}\tilde{y})\d'_{\S_{i}}(g\tilde{t}_{1}\tilde{a}g^{-1}\tilde{y}) \varphi^{\widetilde{\K'}}_{\widetilde{\N'}(\S_{i})}(g\tilde{t}_{1}\tilde{a}g^{-1}\tilde{y}) d\tilde{y} \overline{dg},
\end{equation*}
where $\C$ is a constant defined in \cite[Theorem~0.9]{TOM2}, $\varepsilon$ is the character defined in \cite[Lemma~6.3]{TOM2}, $\d_{\S_{i}}: \widetilde{\L}(\S_{i}) \to \mathbb{R}$ and $\d'_{\S_{i}}: \widetilde{\L'}(\S_{i}) \to \mathbb{R}$ are given by 
\begin{equation*}
\d_{\S_{i}}(\tilde{l}) = |\det(\Ad(\tilde{l})_{\eta(\S_{i})})|^{\frac{1}{2}}, \qquad  \d'_{\S_{i}}(\tilde{l}') = |\det(\Ad(\tilde{l}')_{\eta'(\S_{i})})|^{\frac{1}{2}}, \qquad (\tilde{l} \in \widetilde{\L}(\S_{i}), \tilde{l}' \in \widetilde{\L'}(\S_{i})),
\label{MapGamma}
\end{equation*}
and $\varphi^{\widetilde{\K'}}_{\widetilde{\N'}(\S_{i})}$ is the Harish-Chandra transform of $\varphi$, i.e. the function on $\widetilde{\L'}(\S_{i})$ defined by:
\begin{equation*}
\varphi^{\widetilde{\K'}}_{\widetilde{\N'}(\S_{i})}(\tilde{l}') = \displaystyle\int_{\widetilde{\N'}(\S_{i})} \displaystyle\int_{\widetilde{\K'}} \varphi(\tilde{k}\tilde{l}'\tilde{n}\tilde{k}^{-1}) d\tilde{k}d\tilde{n}, \qquad (\tilde{l}' \in \widetilde{\L'}(\S_{i})).
\end{equation*}
One can easily check that $(\G(\V_{0, i}), \G(\U_{i}))$ is an irreducible dual pair in $\Sp(\W_{0, i})$ of the same type of $\G, \G'$. Moreover, the element $\tilde{t}_{2}$ is contained in the compact Cartan of $\G(\V_{0, i})$. In particular, it follows from Theorem \ref{TheoremTheta} that the integral 
\begin{equation*}
\displaystyle\int_{\widetilde{\G}(\U_{i})} \varepsilon(\tilde{t}_{1}\tilde{a}\tilde{y}) \Chc_{\W_{0, i}}(\tilde{t}_{2}\tilde{y})\d'_{\S_{i}}(g\tilde{t}_{1}\tilde{a}g^{-1}\tilde{y}) \varphi^{\widetilde{\K'}}_{\widetilde{\N'}(\S_{i})}(g\tilde{t}_{1}\tilde{a}g^{-1}\tilde{y}) d\tilde{y}
\end{equation*}
can be seen as a finite sum of integrals, where the test function $\varphi$ is replaced by $\varepsilon(\tilde{y})\d'_{\S_{i}}(g\tilde{t}_{1}\tilde{a}g^{-1}\tilde{y}) \varphi^{\widetilde{\K'}}_{\widetilde{\N'}(\S_{i})}(g\tilde{t}_{1}\tilde{a}g^{-1}\tilde{y}), \tilde{y} \in \widetilde{\G}(\U_{i})$.

\begin{nota}

For every $j \in \{1, \ldots, p\}$ and $k \in \{0, \ldots, p-j\}$, we denote by $\S^{j}_{k} = \{e_{j+1} - e_{p+j+1}, \ldots, e_{j+k} - e_{p+j+k}\}$ the subset of $\Psi^{\st}_{n}(\mathfrak{g}(V_{0, j})_{\mathbb{C}}, \mathfrak{t}_{2}(\S_{j})_{\mathbb{C}})$ and by $\H(\S^{j}_{k})$ the corresponding Cartan subgroup of $\G(\V_{0, j})$. By convention, $\H(\S^{j}_{0}) = \T_{2}(\S_{j})$ is the compact Cartan subgroup of $\G(\V_{0, j})$.

\noindent Assume that $r \leq s$. For $j \in \{1, \ldots, r\}$, and $k \in \{0, \ldots, r-j\}$, we denote by $\S^{j}_{k} = \{e_{j+1} - e_{r+j+1}, \ldots, e_{j+k} - e_{r+j+k}\}$ the subset of $\Psi'^{\st}_{n}(\mathfrak{g}(\U_{j})_{\mathbb{C}}, \mathfrak{t}'_{2}(\S_{j})_{\mathbb{C}})$ and by $\H'(\S^{j}_{k})$ the corresponding Cartan subgroup of $\G(\U_{j})$. By convention, $\H'(\S^{j}_{0}) = \T'_{2}(\S_{j})$ is the compact Cartan subgroup of $\G(\U_{j})$.

\label{NotationGUi}

\end{nota}

\begin{rema}

Let $i \in \{0, \ldots, \min(p, r)\}$ and $j \leq i$. We denote by $\X_{j}, \Y_{j}, \X_{i}, \Y_{i}$ the subspaces of $\V$ as before. There exists subspaces $\widetilde{\X}_{j, i}$ and $\widetilde{\Y}_{j, i}$ such that $\X_{i} = \X_{j} \oplus \widetilde{\X}_{j, i}$ and $\Y_{i} = \Y_{j} \oplus \widetilde{\X}_{j, i}$. In particular, $\U_{j} = \U_{i} \oplus \widetilde{\X}_{j, i} \oplus \widetilde{\Y}_{j, i}$.

\noindent The Cartan subgroup $\H'(\S_{i})$ is included in Levi factor $\L'(\S_{j})$ of the parabolic $\P'(\S_{j})$ of $\G$. Let $\H'(\S^{j}_{i-j})$ be the Cartan subgroup of $\G(\U_{j})$ as in Notation \ref{NotationGUi}. As in Equation \eqref{DecompositionHSi}, we have:
\begin{equation*}
\H'(\S^{j}_{i-j}) = \T'_{1}(\S^{j}_{i-j}) \times \A'(\S^{j}_{i-j}) \times \T'_{2}(\S^{j}_{i-j}),
\end{equation*}
and then we get the following decomposition of $\H'(\S_{i})$:
\begin{equation}
\H'(\S_{i}) = \underbrace{\T'_{1}(\S_{i}) \times \A'(\S_{i})}_{\subseteq \GL(\X_{i})} \times \underbrace{\T'_{2}(\S_{i})}_{\subseteq \G(\U_{i})} = \underbrace{\underbrace{\T'_{1}(\S_{j}) \times \A'(\S_{j})}_{\subseteq \GL(\X_{j})} \times \underbrace{\underbrace{\T'_{1}(\S^{j}_{i-j}) \times \A'(\S^{j}_{i-j})}_{\subseteq \GL(\widetilde{\X}_{j, i})} \times \underbrace{\T'_{2}(\S_{i})}_{\subseteq \G(\U_{i})}}_{\subseteq \GL(\widetilde{\X}_{j, i}) \times \G(\U_{i}) \subseteq \G(\U_{j})}}_{\subseteq \GL(\X_{j}) \times \G(\U_{j}) = \L'(\S_{j})}.
\label{DecompositionHiandHj}
\end{equation}

\noindent Finally, one can see easily that $\T'_{2}(\S_{i}) = \T'_{2}(\S^{j}_{i-j})$ and then, 
\begin{equation*}
\H'(\S_{i}) = \T'_{1}(\S_{j}) \times \A'(\S_{j}) \times \H'(\S^{j}_{i-j}).
\end{equation*}

\label{RemarkDecompositionHSij}

\end{rema}

\noindent We finish this section with a technical lemma which will be useful in Section \ref{ProofConjectureDS}.

\begin{lemme}

For every $f \in \mathscr{C}^{\infty}_{c}(\widetilde{\G'})$ and $\tilde{h}' \in \widetilde{\H'}(\S_{i})$, we get:
\begin{equation*}
\displaystyle\int_{\GL(\X_{j})/\T'_{1}(\S_{j}) \times \A'(\S_{j})} \displaystyle\int_{\G(\U_{j})/\H'(\S^{j}_{i-j})} f^{\widetilde{\K'}}_{\widetilde{\N'}(\S_{j})}(g_{1}g_{2}\tilde{h}'g^{-1}_{2}g^{-1}_{1}) \overline{dg_{2}} \overline{dg_{1}} 
\end{equation*}
\begin{equation*}
= \cfrac{\D_{\L'(\S_{i})}(\tilde{h})}{\D_{\L'(\S_{j})}(\tilde{h})} \displaystyle\int_{\GL(\X_{i})/\T'_{1}(\S_{i}) \times \A'(\S_{i})} \displaystyle\int_{\G(\U_{i})/\T'_{2}(\S_{i})} f^{\widetilde{\K'}}_{\widetilde{\N'}(\S_{i})}(g_{1}g_{2}\tilde{h}'g^{-1}_{2}g^{-1}_{1}) \overline{dg_{2}} \overline{dg_{1}},
\end{equation*}
where $\D_{\L'(\S_{j})}$ and $\D_{\L'(\S_{i})}$ are given by:
\begin{equation*}
\D_{\L'(\S_{j})}(\tilde{h}') = |\det(\Id - \Ad(\tilde{h}')^{-1})_{\mathfrak{l}'(\S_{j})/\mathfrak{h}'(\S_{j})}|^{\frac{1}{2}}, \qquad \D_{\L'(\S_{i})}(\tilde{h}') = |\det(\Id - \Ad(\tilde{h}')^{-1})_{\mathfrak{l}'(\S_{i})/\mathfrak{h}'(\S_{i})}|^{\frac{1}{2}}.
\end{equation*}

\label{IntegrationLemmaProposition67}

\end{lemme}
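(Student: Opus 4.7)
The plan is to recognize both sides of the asserted identity as invariant orbital integrals on the Levi subgroups $\widetilde{\L'}(\S_j)$ and $\widetilde{\L'}(\S_i)$ of $\widetilde{\G'}$, and to invoke Harish-Chandra's parabolic descent formula for orbital integrals twice to identify both sides with the full $\widetilde{\G'}$-orbital integral of $f$ at $\tilde h'$. The ratio $\D_{\L'(\S_i)}(\tilde h')/\D_{\L'(\S_j)}(\tilde h')$ then arises as the quotient of the two Weyl-type normalizing factors appearing in the descent.

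First I would unpack the iterated integrals. By Remark \ref{RemarkDecompositionHSij}, we have $\widetilde{\H'}(\S_i)=(\T'_1(\S_j)\times\A'(\S_j))\times\H'(\S^j_{i-j})$ sitting as a Cartan subgroup of $\widetilde{\L'}(\S_j)=\GL(\X_j)\times\G(\U_j)$, and the product factorization of the Levi respects that of the Cartan. Hence, via Fubini and a standard normalization of the quotient Haar measures, the left-hand side of the lemma equals
\begin{equation*}
\int_{\widetilde{\L'}(\S_j)/\widetilde{\H'}(\S_i)} f^{\widetilde{\K'}}_{\widetilde{\N'}(\S_j)}(l\,\tilde h'\,l^{-1})\,\overline{dl}.
\end{equation*}
The same manipulation, applied to the decomposition $\widetilde{\H'}(\S_i)=(\T'_1(\S_i)\times\A'(\S_i))\times\T'_2(\S_i)$ inside $\widetilde{\L'}(\S_i)=\GL(\X_i)\times\G(\U_i)$, identifies the iterated integral on the right-hand side (stripped of its prefactor) with
\begin{equation*}
\int_{\widetilde{\L'}(\S_i)/\widetilde{\H'}(\S_i)} f^{\widetilde{\K'}}_{\widetilde{\N'}(\S_i)}(l\,\tilde h'\,l^{-1})\,\overline{dl}.
\end{equation*}

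Next I would invoke Harish-Chandra's parabolic descent formula (see e.g.\ the arguments used in \cite[Section~2]{TOM4}): for $\tilde h'$ regular in $\widetilde{\G'}$ and lying in the Levi $\widetilde{L}$ of a parabolic $P=LN$,
\begin{equation*}
\D_{\G'}(\tilde h')\int_{\widetilde{\G'}/\widetilde{\H'}(\S_i)} f(g\tilde h' g^{-1})\,\overline{dg} = \D_{L}(\tilde h')\int_{\widetilde L/\widetilde{\H'}(\S_i)} f^{\widetilde{\K'}}_{\widetilde N}(l\tilde h' l^{-1})\,\overline{dl}.
\end{equation*}
Applying this identity once with $(P,L,N)=(\P'(\S_j),\L'(\S_j),\N'(\S_j))$ and once with $(P,L,N)=(\P'(\S_i),\L'(\S_i),\N'(\S_i))$ (valid because $\tilde h'\in\widetilde{\H'}(\S_i)$ lies in both Levis and is regular in $\widetilde{\G'}$), and equating the two resulting descent expressions, one divides by $\D_{\L'(\S_j)}(\tilde h')$ to conclude precisely the identity claimed in the lemma.

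The principal technical point is the careful bookkeeping of Haar-measure normalizations: one must verify that the product measure $\overline{dg_1}\,\overline{dg_2}$ in the statement corresponds, under the identifications above, to the canonical invariant measure on $\widetilde{\L'}(\S_j)/\widetilde{\H'}(\S_i)$ used in the descent formula, and similarly on $\widetilde{\L'}(\S_i)$. A secondary matter is to interpret $\D_{\L'(\S_j)}(\tilde h')$ as the intrinsic Weyl-type denominator of $\tilde h'$ in $\widetilde{\L'}(\S_j)$; since the Cartan of $\tilde h'$ in $\mathfrak l'(\S_j)$ is actually $\mathfrak h'(\S_i)$, conjugation-invariance of the Weyl denominator makes $|\det(1-\Ad(\tilde h')^{-1})_{\mathfrak l'(\S_j)/\mathfrak h'(\S_j)}|^{1/2}$ and $|\det(1-\Ad(\tilde h')^{-1})_{\mathfrak l'(\S_j)/\mathfrak h'(\S_i)}|^{1/2}$ coincide, so the two viewpoints agree. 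Finally, the metaplectic double cover causes no essential trouble because the parabolics $\P'(\S_i),\P'(\S_j)$ split over $\widetilde{\G'}$, so the descent formula transfers verbatim to the cover.
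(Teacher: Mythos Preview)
Your proposal is correct and follows essentially the same approach as the paper. The paper's proof likewise rewrites each iterated integral as a Levi orbital integral $\int_{\L'(\S_k)/\H'(\S_i)} f^{\widetilde{\K'}}_{\widetilde{\N'}(\S_k)}(l'\tilde h' l'^{-1})\,\overline{dl'}$ for $k=i,j$, applies the parabolic descent formula (citing \cite[Appendix~A]{TOM2} rather than \cite{TOM4}) to relate each of these to the full $\widetilde{\G'}$-orbital integral of $f$ at $\tilde h'$, and then equates the two expressions.
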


\begin{proof}

As explained in \cite[Appendix~A]{TOM2}, we have:
\begin{eqnarray*}
\displaystyle\int_{\widetilde{\G'}/\widetilde{\H'}(\S_{i})} f(g\tilde{h}g^{-1})\overline{dg} & = & \cfrac{\D_{\L'(\S_{i})}(\tilde{h})}{\D_{\L'(\S_{0})}(\tilde{h})} \displaystyle\int_{\L'(\S_{i})/\H'(\S_{i})} f^{\widetilde{\K'}}_{\widetilde{\N'}(\S_{i})}(l'\tilde{h}l'^{-1}) \overline{dl'} \\ 
 & = & \cfrac{\D_{\L'(\S_{i})}(\tilde{h})}{\D_{\L'(\S_{0})}(\tilde{h})} \displaystyle\int_{\GL(\X_{i})/\T'_{1}(\S_{i}) \times \A'(\S_{i})} \displaystyle\int_{\G(\U_{i}) / \T_{2}(\S_{i})} f^{\widetilde{\K'}}_{\widetilde{\N'}(\S_{i})}(g_{1}g_{2}\tilde{h}g^{-1}_{2}g^{-1}_{1})\overline{dg_{2}}\overline{dg_{1}}
\end{eqnarray*}
where $\D_{\L'(\S_{0})}(\tilde{h}) = \D_{\G'}(\tilde{h}') = |\det(\Ad(\tilde{h})^{-1} - \Id)_{\mathfrak{g}'/\mathfrak{h}'(\S_{i})}|^{\frac{1}{2}}$. Similarly, using that $\H'(\S_{i}) \subseteq \P'(\S_{j})$, we get:
\begin{eqnarray*}
\displaystyle\int_{\widetilde{\G'}/\widetilde{\H'}(\S_{j})} f(g\tilde{h}g^{-1})\overline{dg} & = & \cfrac{\D_{\L'(\S_{j})}(\tilde{h})}{\D_{\L'(\S_{0})}(\tilde{h})} \displaystyle\int_{\L'(\S_{j})/\H'(\S_{i})} f^{\widetilde{\K'}}_{\widetilde{\N'}(\S_{j})}(l'\tilde{h}l'^{-1}) \overline{dl'} \\ 
 & = & \cfrac{\D_{\L'(\S_{j})}(\tilde{h})}{\D_{\L'(\S_{0})}(\tilde{h})} \displaystyle\int_{\GL(\X_{j})/\T'_{1}(\S_{j}) \times \A'(\S_{j})} \displaystyle\int_{\G(\U_{j}) / \H'(\S^{j}_{i-j})} f^{\widetilde{\K'}}_{\widetilde{\N'}(\S_{j})}(g_{1}g_{2}\tilde{h}g^{-1}_{2}g^{-1}_{1})\overline{dg_{2}}\overline{dg_{1}},
\end{eqnarray*}
and the lemma follows.

\end{proof}

\section{Transfer of invariant eigendistributions}

\label{TransferEigenDistributions}

We start this section by recalling the notion of invariant eigendistributions. We keep the notations of Appendix \ref{AppendixA}. Let $\G$ be a connected real reductive Lie group, $\mathscr{D}'(\G)$ be the space of distributions of $\G$, i.e. the continuous linear forms on $\mathscr{C}^{\infty}_{c}(\G)$ and $\D^{\G}_{\G}(\G)$ the space of bi-invariant differential operators on $\G$ as in Notations \ref{NotationDGG}.

\noindent For every $f \in \mathscr{C}^{\infty}_{c}(\G)$ and $g \in \G$, we denote by $f^{g}$ the function of $\mathscr{C}^{\infty}_{c}(\G)$ defined by $f^{g}(x) = f(gxg^{-1}), x \in \G$. We say that $T \in \mathscr{D}'(\G)$ is a $\G$-invariant distribution if $T(f^{g}) = T(f)$ for every $f \in \mathscr{C}^{\infty}_{c}(\G)$ and $g \in \G$.

\begin{defn}

A distribution $T$ on $\G$ is an eigendistribution if there exists an algebra homomorphism $\chi_{T}: \D^{\G}_{\G}(\G) \to \mathbb{C}$ such that $D(T) = \chi_{T}(D)T$ for every $D \in \D^{\G}_{\G}(\G)$.

\end{defn}

\noindent As proved by Harish-Chandra (see \cite[Theorem~2]{HAR4}), for every invariant eigendistribution $T$ on $\G$, there exists a locally integrable function $f_{T}$ on $\G$ which is analytic on $\G^{\reg}$ such that $T = f_{T}$, i.e. for every $\varphi \in \mathscr{C}^{\infty}_{c}(\G)$, 
\begin{equation*}
T(\varphi) = \displaystyle\int_{\G} f_{T}(g)\varphi(g)dg.
\end{equation*}

\begin{rema}
\begin{enumerate}
\item Using the isomorphism defined in Appendix \ref{AppendixA}, Theorem \ref{IsomorphismEnveloppingDGG}, an eigendistribution $T$ is an invariant distribution such that there exists a character $\chi_{T}$ of $\Z(\mathscr{U}(\mathfrak{g}_{\mathbb{C}}))$ such that $zT = \chi_{T}(z)T$ for every $z \in \Z(\mathscr{U}(\mathfrak{g}_{\mathbb{C}}))$.
\item Let $(\Pi, \mathscr{H})$ be a representation of $\G$. Following \cite{HAR1}, we say that the representation $\Pi$ is permissible if $\Pi(z)$ is a scalar multiple of the unit operator for every $z \in \Z(\G) \cap \D$, where $\D$ is the analytic subgroup of $\G$ corresponding to $\Z(\mathfrak{k})$ ($\mathfrak{k}$ being the Lie algebra of a maximal compact subgroup $\K$ of $\G$). A permissible representation is said quasi-simple if there exists an homomorphism $\chi$ of $\Z(\mathscr{U}(\mathfrak{g}_{\mathbb{C}}))$ into $\mathbb{C}$ such that $d\Pi(z)(\eta) = \chi(z)\eta$ for every $z \in \Z(\mathscr{U}(\mathfrak{g}_{\mathbb{C}}))$ and $\eta$ in the Garding space $\Gar(\Pi, \mathscr{H})$ (for the definition of $\Gar(\Pi, \mathscr{H})$, see \cite[Part~II]{HAR1}). In particular, for such representations, Harish-Chandra proved that for every $\varphi \in \mathscr{C}^{\infty}_{c}(\G)$, the operator $\Pi(\varphi)$ is a trace class operator (see \cite[Section~5]{HAR3}) and the corresponding map $\Theta_{\Pi}: \mathscr{C}^{\infty}_{c}(\G) \ni \varphi \to \tr(\Pi(\varphi)) \in \mathbb{C}$ is a distribution in the sense of Laurent Schwartz (see \cite[Section~5]{HAR3}); the map $\Theta_{\Pi}$ is called the global character of $\Pi$. Using that $\Theta_{\Pi}$ is an invariant eigendistribution, it follows that there exists a locally integrable function $\Theta_{\Pi}$ on $\G$, analytic on $\G^{\reg}$, such that 
\begin{equation*}
\Theta_{\Pi}(\varphi) = \displaystyle\int_{\G} \Theta_{\Pi}(g) \varphi(g) dg, \qquad (\varphi \in \mathscr{C}^{\infty}_{c}(\G)).
\end{equation*}
The function $\Theta_{\Pi}$ is the character of $\Pi$. As proved in \cite{MAUT}, every irreducible unitary representation is quasi-simple, in particular, every discrete series representations (see Section \ref{SectionDiscreteSeries}) has a character, whose value on $\H^{\reg}$ is explicit (see Theorem \ref{TheoremDiscreteSeries}).
\end{enumerate}
\end{rema}

\begin{nota}

\noindent For every reductive group $\G$, we denote by $\mathscr{I}(\G)$ the space of orbital integrals on $\G$ as in \cite[Section~3]{BOU}. Roughly speaking, the set $\mathscr{I}(\G)$ is a subset of $\mathscr{C}^{\infty}(\G^{\reg})^{\G}$ satisfying 4 conditions (see \cite[Pages~579-580]{BOU}). This space is endowed with a natural topology (see \cite[Section~3.3]{BOU}). We denote by $\J_{\G}$ the map $\J_{\G}: \mathscr{C}^{\infty}_{c}(\G) \to \mathscr{I}(\G)$ given as follow: for every $\gamma \in \G^{\reg}$, there exists a unique, up to conjugation, Cartan subgroups $\H(\gamma)$ of $\G$ such that $\gamma \in \H(\gamma)$, and for every $\varphi \in \mathscr{C}^{\infty}_{c}(\G)$, we define $\J_{\G}(\varphi)(\gamma)$ by:
\begin{equation*}
\J_{\G}(\varphi)(\gamma) = |\det(\Id - \Ad(\gamma^{-1}))_{\mathfrak{g}/\mathfrak{h}(\gamma)}|^{\frac{1}{2}} \displaystyle\int_{\G/\H(\gamma)} \varphi(g\gamma g^{-1}) \overline{dg}.
\end{equation*}

\end{nota}

\noindent As proved in \cite[Theorem~3.2.1]{BOU}, the map:
\begin{equation*}
\J_{\G}: \mathscr{C}^{\infty}_{c}(\G) \to \mathscr{I}(\G)
\end{equation*}
is well-defined and surjective. We denote by $\mathscr{I}'(\G)$ the set of continuous linear forms on $\mathscr{I}(\G)$ and by $\J^{t}_{\G}: \mathscr{I}'(\G) \to \mathscr{D}'(\G)$ the transpose of $\J_{\G}$ defined by
\begin{equation*}
\J^{t}_{\G}(T)(\varphi) = T(\J_{\G}(\varphi)), \qquad (T \in \mathscr{I}'(\G), \varphi \in \mathscr{C}^{\infty}_{c}(\G)).
\end{equation*}
As proved in \cite[Theorem~3.2.1]{BOU}, $\J^{t}_{\G}(T)$ is a $\G$-invariant distribution on $\G$ and the corresponding map:
\begin{equation*}
\J^{t}_{\G}: \mathscr{I}'(\G) \to \mathscr{D}'(\G)^{\G}
\end{equation*}
is bijective.

\bigskip

\noindent Let $(\G, \G')$ be an irreducible dual pair in $\Sp(W)$ such that $\rk(\G) \leq \rk(\G')$ and $(\mathscr{I}(\widetilde{\G}), \J_{\widetilde{\G}})$, $(\mathscr{I}(\widetilde{\G'}), \J_{\widetilde{\G'}})$ be the corresponding space of orbital integrals on $\widetilde{\G}$ and $\widetilde{\G'}$ respectively. To simplify, we assume that both $\G$ and $\G'$ are connected). For every function $\varphi \in \mathscr{C}^{\infty}_{c}(\widetilde{\G'})$, we denote by $\Chc(\varphi)$ the $\widetilde{\G}$-invariant function on $\widetilde{\G}^{\reg}$ given by:
\begin{equation*}
\Chc(\varphi)(\tilde{h}_{i}) = \Chc_{\tilde{h}_{i}}(\varphi), \qquad (\tilde{h}_{i} \in \widetilde{\H_{i}}^{\reg}).
\end{equation*}
In \cite{TOM4}, the authors proved the following results:

\begin{theo}

For every $\varphi \in \mathscr{C}^{\infty}_{c}(\widetilde{\G'})$, $\Chc(\varphi) \in \mathscr{I}(\widetilde{\G})$ and the corresponding map 
\begin{equation*}
\Chc: \mathscr{C}^{\infty}_{c}(\widetilde{\G'}) \to \mathscr{I}(\widetilde{\G})
\end{equation*}
is continuous. Moreover, if $\J_{\widetilde{\G'}}(\varphi) = 0$, we get that $\Chc(\varphi) = 0$, i.e. the map $\Chc: \mathscr{C}^{\infty}_{c}(\widetilde{\G'}) \to \mathscr{I}(\widetilde{\G})$ factors through $\mathscr{I}(\widetilde{\G'})$ and we get a transfer of orbital integrals:
\begin{equation*}
\Chc: \mathscr{I}(\widetilde{\G'}) \to \mathscr{I}(\widetilde{\G}).
\end{equation*}

\end{theo}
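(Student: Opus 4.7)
The plan is to verify, for each $\varphi \in \mathscr{C}^{\infty}_{c}(\widetilde{\G'})$, the four defining conditions of $\mathscr{I}(\widetilde{\G})$ (as listed in [BOU, Section~3]) by exploiting the two explicit formulas of Section~\ref{SectionCauchyExplicit}. The strategy is an induction on $\rk(\G)$: Theorem \ref{TheoremTheta} controls $\Chc_{\tilde{h}}(\varphi)$ on the compact Cartan $\H = \H(\emptyset)$ directly, while Equation \eqref{ChcNonCompact} reduces the value on any non-compact Cartan $\widetilde{\H}(\S_i)^{\reg}$ to a Cauchy-Harish-Chandra integral of the smaller dual pair $(\G(\V_{0, i}), \G(\U_i))$ applied at $\tilde{t}_2$, in its compact Cartan, to the Harish-Chandra transform $\varphi^{\widetilde{\K'}}_{\widetilde{\N'}(\S_i)}$. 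Since the smaller pair is of the same type and strictly smaller rank, the induction reduces everything to the compact Cartan situation.

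Given this reduction, smoothness of $\Chc(\varphi)$ on $\widetilde{\G}^{\reg}$ and continuity of the map $\varphi \mapsto \Chc(\varphi)$ follow from the continuity of the Harish-Chandra transform and the analyticity of the rational kernel $1/\det(1 - p(h)rp(h'))_{\sigma\W^{\mathfrak{h}}}$ away from the singular locus. Proper support and $\widetilde{\K}$-invariance are inherited from $\varphi$. The genuinely non-trivial fourth condition is the matching/jump relation at semi-regular elements: one must analyse the boundary limit $r \to 1$ in Theorem \ref{TheoremTheta} and show that the jumps produced by the poles of the kernel along walls $\check{h}^{\alpha} = 1$ are exactly those demanded by Harish-Chandra's matching conditions for $\mathscr{I}(\widetilde{\G})$, with the correct signs supplied by $\varepsilon_{\Psi'_{\S, \mathbb{R}}}$ and by the alternating sum over $\sigma \in \mathscr{W}(\H'_{\mathbb{C}})$.

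For the factorization through $\mathscr{I}(\widetilde{\G'})$, the observation is that after applying Lemma \ref{IntegrationLemmaProposition67} (and its iterate along the chain described in Remark \ref{RemarkDecompositionHSij}), the test function $\varphi$ enters every formula of Section \ref{SectionCauchyExplicit} only through expressions of the form
\begin{equation*}
\displaystyle\int_{\widetilde{\L'}(\S)/\widetilde{\H'}(\S)} \varphi^{\widetilde{\K'}}_{\widetilde{\N'}(\S)}(g\tilde{h}'g^{-1}) \overline{dg},
\end{equation*}
which, up to the explicit Jacobian $\D_{\G'}(\tilde{h}')/\D_{\L'(\S)}(\tilde{h}')$ appearing in the proof of that lemma, equals the orbital integral $\J_{\widetilde{\G'}}(\varphi)(\tilde{h}')$. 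Hence $\J_{\widetilde{\G'}}(\varphi) = 0$ forces every such expression to vanish on the relevant Cartan and therefore $\Chc(\varphi) = 0$. Continuity of the induced map $\Chc: \mathscr{I}(\widetilde{\G'}) \to \mathscr{I}(\widetilde{\G})$ then follows from the open mapping theorem combined with the bijectivity of $\J^{t}_{\widetilde{\G'}}$ recorded in [BOU, Theorem~3.2.1]. The hardest part of the argument is the boundary jump analysis: one has to distinguish carefully between compact-imaginary, non-compact-imaginary and real roots, and verify that the cancellations produced by $\varepsilon_{\Psi'_{\S, \mathbb{R}}}$ together with the Weyl-group alternating sum promote the \emph{a priori} smooth-only invariant function on $\widetilde{\G}^{\reg}$ into an actual element of $\mathscr{I}(\widetilde{\G})$.
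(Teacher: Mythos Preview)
The paper does not prove this theorem: it is stated in Section~\ref{TransferEigenDistributions} as a result of Bernon--Przebinda \cite{TOM4}, with no argument given beyond the citation. There is therefore no proof in the paper to compare your proposal against.

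Your outline is a reasonable summary of the strategy actually used in \cite{TOM4} and \cite{TOM2}: reduce to the compact Cartan via the descent formula \eqref{ChcNonCompact}, control the compact Cartan contribution through Theorem~\ref{TheoremTheta}, and observe that the test function enters only through orbital integrals to obtain the factorization. However, what you have written is a plan, not a proof. You explicitly flag the jump relations at semi-regular points as ``the hardest part'' and then do not carry out any of that analysis; likewise the verification that the limits $r\to 1$ in $\E_{\sigma,\S}$ produce exactly the Harish-Chandra matching conditions, with correct signs across all root types, is asserted rather than demonstrated. In \cite{TOM4} this step occupies the bulk of the work and requires a careful case-by-case treatment that your sketch does not supply. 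If your intention is simply to cite the result, as the paper does, that is fine; but the proposal as written does not constitute an independent proof.
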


\noindent By dualizing the previous map, we get $\Chc^{t}: \mathscr{I}'(\widetilde{\G}) \to \mathscr{I}'(\widetilde{\G'})$ given by
\begin{equation*}
\Chc^{t}(\tau)(\phi) = \tau(\Chc(\phi)) \qquad (\tau \in \mathscr{I}'(\widetilde{\G}), \phi \in \mathscr{I}(\widetilde{\G'})).
\end{equation*}
By using the isomorphisms $\J^{t}_{\widetilde{\G}}$ and $\J^{t}_{\widetilde{\G'}}$, we get a map $\Chc^{*}: \mathscr{D}'(\widetilde{\G})^{\widetilde{\G}} \to \mathscr{D}'(\widetilde{\G'})^{\widetilde{\G'}}$ given by $\Chc^{*} = \J^{t}_{\widetilde{\G'}} \circ \Chc^{t} \circ (\J^{t}_{\widetilde{\G}})^{-1}$.

\noindent We denote by $\Eig(\widetilde{\G})$ (resp. $\Eig(\widetilde{\G'}))$ the set of invariant eigendistributions on $\widetilde{\G}$ (resp. $\widetilde{\G'}$).

\begin{theo}

The map $\Chc^{*}: \mathscr{D}'(\widetilde{\G})^{\widetilde{\G}} \to \mathscr{D}'(\widetilde{\G'})^{\widetilde{\G'}}$ sends $\Eig(\widetilde{\G})^{\widetilde{\G}}$ into $\Eig(\widetilde{\G'})^{\widetilde{\G'}}$.

\label{TheoremTransferOfEigendistributions}

\end{theo}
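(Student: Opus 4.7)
The plan is to show that $\Chc^{*}$ intertwines the actions of the two centers $\Z(\mathscr{U}(\mathfrak{g}_{\mathbb{C}}))$ and $\Z(\mathscr{U}(\mathfrak{g}'_{\mathbb{C}}))$, so that the eigendistribution property is automatically transported from $\widetilde{\G}$ to $\widetilde{\G'}$. The starting point is the infinitesimal correspondence carried by the Weil representation: since $\widetilde{\G}$ and $\widetilde{\G'}$ form a dual pair in $\widetilde{\Sp}(\W)$, the image of $\Z(\mathscr{U}(\mathfrak{g}_{\mathbb{C}}))$ under $d\omega$ coincides with the image of $\Z(\mathscr{U}(\mathfrak{g}'_{\mathbb{C}}))$. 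I would therefore produce a map $\phi: \Z(\mathscr{U}(\mathfrak{g}'_{\mathbb{C}})) \to \Z(\mathscr{U}(\mathfrak{g}_{\mathbb{C}}))$, well defined modulo the annihilator of $\omega$, such that $d\omega(z') = d\omega(\phi(z'))$ for every $z' \in \Z(\mathscr{U}(\mathfrak{g}'_{\mathbb{C}}))$.

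Next, using that $\T$ intertwines the regular action of $\widetilde{\Sp}(\W)$ with the Weil representation, together with the standard rule $\T(z\varphi) = d\omega(z)\T(\varphi)$ for bi-invariant differential operators, I would derive at the level of Schwartz functions on $\W$ the identity $\T(z' \varphi) = \T(\phi(z') \varphi)$, where on the two sides $z'$ and $\phi(z')$ act as bi-invariant differential operators on the respective metaplectic groups. Integrating along the fibers of $\A''_{i} \setminus \W_{\A''_{i}}$, and using that the bi-invariant operator $\phi(z')$ commutes with integration along $\A''_{i}$-orbits, then pulling back via $\tau^{*}_{\tilde{h}_{i}}$, I would obtain an intertwining identity of the form $\Chc(z' \varphi) = \phi(z') \Chc(\varphi)$ as functions on $\widetilde{\G}^{\reg}$. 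Transposing through the orbital-integral isomorphism $\J^{t}_{\widetilde{\G}}$, this yields
\begin{equation*}
z' \cdot \Chc^{*}(T) = \Chc^{*}\bigl(\phi(z') \cdot T\bigr) \qquad (T \in \mathscr{D}'(\widetilde{\G})^{\widetilde{\G}}).
\end{equation*}

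From here the conclusion is immediate: if $T$ is an invariant eigendistribution with character $\chi_{T}$, then $z' \cdot \Chc^{*}(T) = \chi_{T}(\phi(z'))\Chc^{*}(T)$, so $\Chc^{*}(T)$ is an invariant eigendistribution on $\widetilde{\G'}$ with infinitesimal character $z' \mapsto \chi_{T}(\phi(z'))$. The main technical obstacle is the intertwining step: one must carefully justify that the distributional pushforward defining $\Chc$ commutes with bi-invariant differential operators through the Weil representation, despite the delicate behavior of $\T(\varphi)$ near the vanishing locus of $\det(g-1)$ and at the boundary of $\W_{\A''_{i}}$. This is essentially the content of \cite[Theorem~1.3]{TOM4}, which I would invoke here rather than re-prove from scratch.
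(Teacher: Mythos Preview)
Your proposal is correct and aligns with how the paper handles this theorem. The paper does not give its own proof of Theorem~\ref{TheoremTransferOfEigendistributions}; it states the result as one of the theorems proved in \cite{TOM4}, and when the intertwining property is actually needed (in the Introduction and in Lemma~\ref{ImportantLemma1}) the paper invokes \cite[Theorem~1.3]{TOM4} and \cite[Theorem~1.4]{TOM4} directly. Your sketch---correspondence of centers through $d\omega$, intertwining at the level of $\T$, passage to $\Chc$ and then to $\Chc^{*}$ by transposition---is precisely the architecture of the argument in \cite{TOM4}, and you correctly identify \cite[Theorem~1.3]{TOM4} as the place where the delicate analytic step (commuting bi-invariant operators past the distributional pushforward defining $\Chc$) is carried out.
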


\begin{rema}

If $\Theta$ is a distribution on $\widetilde{\G}$ given by a locally integrable function $\Theta$ on $\widetilde{\G'}$, we get for every $\varphi \in \mathscr{C}^{\infty}_{c}(\widetilde{\G'})$ the following equality:
\begin{equation*}
\Chc^{*}(\Theta)(\varphi) = \sum\limits_{i=1}^{n} \cfrac{1}{|\mathscr{W}(\H_{i})|} \displaystyle\int_{\widetilde{\H_{i}}^{\reg}} \Theta(\tilde{h}_{i})|\det(1-\Ad(\tilde{h}^{-1}_{i}))_{\mathfrak{g}/\mathfrak{h}_{i}}| \Chc(\varphi)(\tilde{h}_{i}) d\tilde{h}_{i}.
\end{equation*}
where $\H_{1}, \ldots, \H_{n}$ is a maximal set of non-conjugate Cartan subgroups of $\G$.

\label{ValueChcThetaPi}

\end{rema}

\noindent We recall the following conjecture.

\begin{conj}

Let $\G_{1}$ and $\G'_{1}$ be the Zariski identity components of $\G$ and $\G'$ respectively. Let $\Pi \in \mathscr{R}(\widetilde{\G}, \omega)$ satisfying ${\Theta_{\Pi}}_{|_{\widetilde{\G}/\widetilde{\G_{1}}}} = 0$ if $\G = \O(\V)$, where $\V$ is an even dimensional vector space over $\mathbb{R}$ or $\mathbb{C}$. Then, up to a constant, $\Chc^{*}(\Theta_{\Pi}) = \Theta_{\Pi'_{1}}$ on $\widetilde{\G'_{1}}$.

\label{ConjectureHC}

\end{conj}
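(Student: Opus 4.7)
\section{Proof Proposal}

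The plan is to follow the strategy already outlined in the introduction and exhibit $\Chc^{*}(\Theta_{\Pi})$ as a tempered invariant eigendistribution on $\widetilde{\G'}$ whose restriction to the compact Cartan has the shape of a discrete-series character with parameter $\lambda_{\Pi}$; Harish-Chandra's parametrisation \cite[Lemma~44]{HAR5} then identifies it with $\Theta_{\Pi'}$ up to a constant.

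\textbf{Step 1: Regularity of $\Chc^{*}(\Theta_{\Pi})$.} By Theorem \ref{TheoremTransferOfEigendistributions} the distribution $\Chc^{*}(\Theta_{\Pi})$ is an invariant eigendistribution on $\widetilde{\G'}$. Applying Harish-Chandra's regularity theorem \cite[Theorem~2]{HAR4}, there exists a locally integrable function $\Theta'_{\Pi}$ on $\widetilde{\G'}$, analytic on $\widetilde{\G'}^{\reg}$, representing it. By the results of Section \ref{CommutativeDiagram} sketched in the introduction, $\Chc^{*}(\Theta_{\Pi})$ has infinitesimal character $\chi_{\lambda_{\Pi}}$, so $\Theta'_{\Pi}$ is a $\chi_{\lambda_{\Pi}}$-eigenfunction.

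\textbf{Step 2: Computation on the compact Cartan.} I would compute $\Theta'_{\Pi}$ on $\widetilde{\H'}^{\reg}$ starting from the formula in Remark \ref{ValueChcThetaPi}, which expresses $\Chc^{*}(\Theta_{\Pi})(\varphi)$ as a sum over the non-conjugate Cartan subgroups $\H(\S_{i})$ of $\G$ of integrals of $\Theta_{\Pi}(\tilde{h})\,|\det(1-\Ad(\tilde{h}^{-1}))_{\mathfrak{g}/\mathfrak{h}(\S_{i})}|\,\Chc_{\tilde{h}}(\varphi)$. For $\S_{0}=\emptyset$ the contribution is handled directly by Theorem \ref{TheoremTheta}, inserting Harish-Chandra's explicit formula for the discrete-series character on the compact Cartan and carrying out the Weyl integration over $\check{\H}'$. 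For the non-compact Cartans $\H(\S_{i})$, $i\geq 1$, I would use the reduction formula \eqref{ChcNonCompact} to express $\Chc_{\tilde{h}}$ in terms of the compact-Cartan Cauchy-Harish-Chandra integral for the smaller dual pair $(\G(\V_{0,i}), \G(\U_{i}))$, together with Harish-Chandra's formula for $\Theta_{\Pi}$ on $\widetilde{\H(\S_{i})}^{\reg}$ (which again reduces to the compact-Cartan expression via the standard limit formulas for discrete series). The test function $\varphi$ is replaced by its Harish-Chandra transform on $\widetilde{\L'}(\S_{i})$, and by choosing $\varphi$ supported in $\widetilde{\G'}\cdot\widetilde{\H'}(\S_{j})^{\reg}$ (Remark \ref{RemarkWeylIntegration2}) one can isolate the restriction of $\Theta'_{\Pi}$ to each Cartan of $\G'$. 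Lemma \ref{IntegrationLemmaProposition67} is used to transfer integration over $\GL(\X_{j})\times\G(\U_{j})/\H'(\S^{j}_{i-j})$ to integration over $\GL(\X_{i})\times\G(\U_{i})/\T'_{2}(\S_{i})$ so that the various contributions can be matched, and Theorem \ref{TheoremTheta} applied to the inner dual pair. Collecting the Weyl-group sums, one arrives at
\begin{equation*}
\Delta(\check{h}')\,\Theta'_{\Pi}(\check{p}(\check{h}')) = \C \sum_{\sigma \in \mathscr{S}_{r}\times\mathscr{S}_{s}} \varepsilon(\sigma)\,(\sigma\check{h}')^{\lambda_{\Pi}}, \qquad \check{h}'\in\check{\H}'^{\reg},
\end{equation*}
which is Proposition \ref{ImportantProposition1}.

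\textbf{Step 3: Temperedness bound.} To apply Harish-Chandra's parametrisation I need the estimate $\sup_{\tilde{g}'\in\widetilde{\G'}^{\reg}} |\D(\tilde{g}')|^{1/2}|\Theta'_{\Pi}(\tilde{g}')|<\infty$ stated as Proposition \ref{ImportantProposition2}. I would obtain it by bounding $|\D(\tilde{g}')|^{1/2}\Theta'_{\Pi}$ on each Cartan subgroup $\widetilde{\H'}(\S_{j})^{\reg}$ separately: substitute the formula for $\Chc$ on $\H(\S_{i})$ from \cite{TOM2} (Equation \eqref{ChcNonCompact}) into Remark \ref{ValueChcThetaPi}, invoke the growth estimates for discrete series characters on non-compact Cartans, and use the uniform bounds on $\Chc$ proved in \cite{TOM2}. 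The $|\det(\Ad(\tilde{h})-\Id)_{\eta(\S_{i})}|$ factors combine with the Weyl denominators to produce exactly the $|\D|^{1/2}$ normalisation.

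\textbf{Step 4: Identification via Harish-Chandra's Lemma 44.} The invariant eigendistribution $\Chc^{*}(\Theta_{\Pi})$ has infinitesimal character $\chi_{\lambda_{\Pi}}$, satisfies the tempered bound of Step 3, and its value on the compact Cartan $\widetilde{\H'}$ is the antisymmetrisation of $\check{h}'\mapsto(\check{h}')^{\lambda_{\Pi}}$ up to $\Delta$. By \cite[Lemma~44]{HAR5}, any such distribution is, up to a scalar, the character of the discrete series representation of $\widetilde{\G'}$ with Harish-Chandra parameter $\lambda_{\Pi}$. Since by \cite[Proposition~2.4]{LI2} we have $\Pi'_{1}=\Pi'$, and by \cite[Theorem~2.7]{PAUL2} the parameter of $\Pi'=\theta(\Pi)$ is $\mathscr{S}_{r+s}$-conjugate to $\lambda_{\Pi}$, this discrete series is precisely $\Pi'$, completing the proof.

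\textbf{Main obstacle.} The technically delicate step is Step 2: assembling the contributions from the $p+1$ non-compact Cartan subgroups of $\G$ into a single antisymmetric expression on $\widetilde{\H'}$. Each non-compact Cartan forces a cascade through the reduction formula \eqref{ChcNonCompact} to a smaller unitary dual pair, and one must keep careful track of the signs $\varepsilon_{\Phi'_{\S,\mathbb{R}}}$, the Harish-Chandra transforms and the Weyl-group combinatorics (the decomposition in Remark \ref{RemarkDecompositionHSij}) so that the sum over $\mathscr{S}_{r}\times\mathscr{S}_{s}$ in the final formula emerges cleanly with the correct sign character. Lemma \ref{IntegrationLemmaProposition67} is precisely the bookkeeping tool that makes the Cartan-by-Cartan contributions compatible.
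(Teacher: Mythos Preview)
Your four-step strategy is correct and matches the paper's architecture exactly. However, your Step~2 is substantially overcomplicated, and the machinery you invoke there actually belongs in Step~3.

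The paper's key observation for Proposition~\ref{ImportantProposition1} (the compact-Cartan computation) is that when $\supp(\varphi)\subseteq\widetilde{\G'}\cdot\widetilde{\H'}^{\reg}$, \emph{only} the compact Cartan $\H(\emptyset)$ of $\G$ contributes to $\Chc^{*}(\Theta_{\Pi})(\varphi)$: for $i\geq 1$ the reduction formula~\eqref{ChcNonCompact} evaluates $\varphi^{\widetilde{\K'}}_{\widetilde{\N'}(\S_{i})}$ at elements of $\L'(\S_{i})$ with non-trivial split component $\tilde a$, so the Harish-Chandra transform vanishes there. Once reduced to a single integral over $\check{\H}$, the paper proceeds by a direct residue computation: the basic Cauchy formula (Lemma~\ref{LemmaComplexIntegrals}), an explicit description of the domain $\E_{\sigma,\emptyset}$ (Lemma~\ref{LemmaESigma}), and a combinatorial argument identifying the set of $(\sigma,\beta)\in\mathscr{S}_{r+s}\times(\mathscr{S}_{p}\times\mathscr{S}_{q})$ for which the integral survives with a coset $(\mathscr{S}_{r}\times\mathscr{S}_{s})\cdot\tau_{a,b}$, producing the permutation $\tau_{a,b}$ and the final antisymmetric sum. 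You do not mention this residue/combinatorial step, which is where the actual work in Step~2 lies; there is no ``cascade'' over the non-compact Cartans of $\G$ here.

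The reduction formula~\eqref{ChcNonCompact}, Lemma~\ref{IntegrationLemmaProposition67}, the smaller dual pairs $(\G(\V_{0,j}),\G(\U_{j}))$, and the decomposition of Remark~\ref{RemarkDecompositionHSij} that you cite are exactly the tools the paper deploys for Step~3 (Proposition~\ref{ImportantProposition2}): there one must compute $\Theta'_{\Pi}$ on each non-compact Cartan $\H'(\S_{i})$ of $\G'$, and now \emph{all} Cartans $\H(\S_{j})$ of $\G$ with $j\leq\min(i,p)$ contribute. The paper expands $\Theta_{\Pi}(c(\S_j)\check{p}(\cdot)c(\S_j)^{-1})\Delta_{\Psi'}$ on $\check{\H}_{\S_j}$ as a finite sum of exponentials via \cite[Theorems~10.35 and~10.48]{KNA2}, applies Lemma~\ref{LemmaComplexIntegrals} once more to bound the resulting integrals, and concludes that $|\Delta_{\Psi'}(\check h')\Theta'_{\Pi}|$ is a finite sum of bounded exponentials. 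Your sketch ``invoke the growth estimates for discrete series characters and the uniform bounds on $\Chc$ from \cite{TOM2}'' is correct in spirit but hides this rather lengthy explicit computation. In short: you have the right ingredients, but Step~2 is much simpler than you anticipate, and the difficulty you flag as the ``main obstacle'' is really the content of Step~3.
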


\noindent In few cases, the conjecture is well-known: if $\G$ is compact (see \cite{TOM1}) and if $(\G, \G')$ is in the stable range (see \cite{TOM5}). In this paper, we are investigating the case $\rk(\G) = \rk(\G')$, with $\Pi$ a discrete series representation of $\widetilde{\G}$. We will focus our attention on the dual pair of unitary groups satisfying $\rk(\G) = \rk(\G')$, using some results of A. Paul that we recall in the next section.

\section{Discrete series representations and a result of A. Paul}

\label{SectionDiscreteSeries}

Let $\G$ be a connected real reductive Lie group.

\begin{defn}

We say that an irreducible representation $(\Pi, (\mathscr{H}, \langle\cdot,\cdot\rangle))$ is a discrete series representation if all the functions $\tau_{u, v}, u, v \in \mathscr{H}$, are in $\L^{2}(\G)$, where 
\begin{equation*}
\tau_{u, v}: \G \ni g \to \langle g(u), v \rangle \in \mathbb{C}.
\end{equation*}

\end{defn}

\begin{rema}

One can prove that the condition given in the previous definition is equivalent to say that the representation $(\Pi, \mathscr{H})$ is equivalent with a direct summand of the right regular representation of $\G$ on $\L^{2}(\G)$.

\noindent Moreover, as recalled in \cite[Section~9.3]{KNA2}, for such a representation $(\Pi, \mathscr{H})$, there exists a positive number $d_{\Pi}$ ( depending on the Haar measure $dg$ on $\G$), called the formal degree of $\Pi$, such that for every $u_{1}, u_{2}, v_{1}, v_{2} \in \mathscr{H}$,
\begin{equation*}
\displaystyle\int_{\G} \langle\Pi(g)u_{1}, v_{1}\rangle\overline{\langle\Pi(g)u_{2}, v_{2}\rangle}dg = \cfrac{\langle u_{1}, u_{2}\rangle \overline{\langle v_{1}, v_{2}} \rangle}{d_{\Pi}}.
\end{equation*}

\label{RemarkFormalDegree}

\end{rema}

\noindent In his papers \cite{HAR5} and \cite{HAR2}, Harish-Chandra gave a classification of the discrete series representations of $\G$. First of all, he proved that $\G$ has discrete series if and only if $\G$ has a compact Cartan subgroup (see \cite[Theorem~13]{HAR2}). Let $\K$ be a maximal compact subgroup of $\G$ and $\H$ a Cartan subgroup of $\K$. He also proved that the set of discrete series is indexed by a lattice of $i\mathfrak{h}^{*}$. We say few words about this now. Let $\Psi = \Psi(\mathfrak{g}_{\mathbb{C}}, \mathfrak{h}_{\mathbb{C}})$ be the set of roots of $\mathfrak{g}$, $\Psi(\mathfrak{k}) = \Psi(\mathfrak{k}_{\mathbb{C}}, \mathfrak{h}_{\mathbb{C}})$ be the set of compact roots of $\mathfrak{g}$, $\rho = \frac{1}{2} \sum\limits_{\alpha \in \Psi^{+}} \alpha$ and $\rho(\mathfrak{k}) = \frac{1}{2} \sum\limits_{\alpha \in \Phi^{+}(\mathfrak{k})} \alpha$.

\begin{nota}

For every $g \in \G$, we denote by $\D_{g}$ the function on $\mathbb{R}$ given by
\begin{equation*}
\D_{g}(t) = \det((t+1)\Id_{\mathfrak{g}} - \Ad(g)) \qquad (t \in \mathbb{R}).
\end{equation*}
In particular, $\D_{g}(t) = \sum\limits_{i=0}^{n} t^{i}\D_{i}(g)$, with $n = \dim(\G)$. The $\D_{i}'s$ are analytic on $\G$ and let $l$ be the least integer such that $\D_{l} \neq 0$. The integer $l$ is the rank of $\mathfrak{g}$. We denote by $\D(g)$ the coefficient of $t^{l}$ in the previous polynomial and by $\G^{\reg}$ the set of $g \in \G$ such that $\D(g) \neq 0$.

\label{NotationsD}

\end{nota}

\begin{theo}

Let $\lambda$ be an element of $i\mathfrak{h}^{*}$ such that $\lambda + \rho$ is analytic integral. Then, there exists a discrete series representation $(\Pi_{\lambda}, \mathscr{H}_{\lambda})$ of $\G$ such that:
\begin{enumerate}
\item The representation $\Pi_{\lambda}$ has infinitesimal character $\chi_{\lambda}$ as in Remark \ref{RemarkInfinitesimalCharacter},
\item The linear form $\nu = \lambda + \rho - 2\rho(\mathfrak{k})$ is the highest weight of the lowest $\K$-type for ${\Pi_{\lambda}}_{|_{\K}}$ and the multiplicity of the corresponding representation $\Pi_{\nu}$ in ${\Pi_{\lambda}}_{|_{\K}}$ is one.
\end{enumerate}
The parameter $\lambda$ is called the Harish-Chandra parameter of $\Pi_{\lambda}$. Moreover, if we denote by $\Theta_{\lambda}$ the distribution character of $\Pi$ and by $\Theta_{\lambda}$ the corresponding locally integrable function on $\G^{\reg}$, we get that the restriction of $\Theta_{\lambda}$ of $\Pi$ to $\H^{\reg}$ is given by the following formula
\begin{equation*}
\Theta_{\lambda}(\exp(X)) = (-1)^{\frac{\dim(\G) - \dim(\K)}{2}}\sum\limits_{w \in \mathscr{W}(\mathfrak{k})} \varepsilon(w) \cfrac{e^{(w\lambda)(X)}}{\prod\limits_{\alpha > 0} (e^{\frac{\alpha(X)}{2}} - e^{-\frac{\alpha(X)}{2}})}, \qquad (X \in \mathfrak{h}^{\reg}).
\end{equation*}

\label{TheoremDiscreteSeries}

\end{theo}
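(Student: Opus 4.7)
The statement is Harish-Chandra's classification of the discrete series, and in practice it is invoked as a reference (\cite{HAR2}, \cite{HAR5}) rather than reproven. The plan is therefore to sketch the three main steps: first, construct an invariant eigendistribution $\Theta_{\lambda}$ on $\G$ whose restriction to $\H^{\reg}$ is the alternating sum written in the statement; second, identify $\Theta_{\lambda}$ as the global character of a discrete series representation $\Pi_{\lambda}$; and third, extract the infinitesimal character and the lowest $\K$-type from the explicit formula.

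For the first step, define $\Theta_{\lambda}$ on $\H^{\reg}$ by the prescribed alternating sum and extend it, through the Cayley transforms $c(\S)$ of Remark \ref{ExampleUnitary}, to the regular set of every non-compact Cartan subgroup of $\G$. The main obstacle of the whole argument lies here: one must arrange the signs so that the resulting function on $\G^{\reg}$ patches together into a locally integrable $\G$-invariant function which extends to an element of $\Eig(\G)$. This rests on Harish-Chandra's matching conditions across the singular walls (controlling the jumps of the invariant integral) together with his regularity theorem \cite[Theorem~2]{HAR4} applied to the candidate eigendistribution. Once $\Theta_{\lambda} \in \Eig(\G)$ is available, part (1) of the conclusion follows immediately: the exponents $e^{(w\lambda)(X)}$ in the character formula, combined with the Harish-Chandra isomorphism $\Z(\mathscr{U}(\mathfrak{g}_{\mathbb{C}})) \xrightarrow{\sim} \mathscr{U}(\mathfrak{h}_{\mathbb{C}})^{\mathscr{W}}$, yield $z \Theta_{\lambda} = \chi_{\lambda}(z) \Theta_{\lambda}$ for all $z \in \Z(\mathscr{U}(\mathfrak{g}_{\mathbb{C}}))$.

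The second step verifies that $\Theta_{\lambda}$ is tempered and that $|\D|^{1/2}|\Theta_{\lambda}|$ is bounded on $\G^{\reg}$; combined with Harish-Chandra's Plancherel theorem, this forces $\Theta_{\lambda}$ to be the character of a unique irreducible tempered representation $\Pi_{\lambda}$. The vanishing of $\Theta_{\lambda}$ on every non-compact Cartan subgroup, which is built into the construction, then guarantees by Harish-Chandra's criterion that $\Pi_{\lambda}$ has square-integrable matrix coefficients and so lies in the discrete series. For part (2), expanding $\Theta_{\lambda}|_{\K}$ through Weyl integration on $\K$ and comparing the result with the Weyl character formula shows that $\Pi_{\nu}$, with $\nu = \lambda + \rho - 2\rho(\mathfrak{k})$, occurs with multiplicity exactly one and that every other $\K$-type appearing has highest weight strictly larger than $\nu$ in the dominance order; this is the content of Blattner's formula, established by Hecht--Schmid. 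An alternative realization of $\Pi_{\lambda}$ as an $L^{2}$-cohomology space $H^{s}_{\bar{\partial}}(\G/\H, \mathscr{L}_{\lambda})$, due to Schmid, identifies the lowest $\K$-type geometrically via Borel--Weil--Bott on $\K/\H$, and either route completes the proof.
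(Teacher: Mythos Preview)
The paper does not prove this theorem at all: it is stated as a classical result of Harish-Chandra, with the surrounding text pointing to \cite{HAR5} and \cite{HAR2} for the construction and classification. Your proposal correctly anticipates this (``in practice it is invoked as a reference rather than reproven''), so in that sense you are aligned with the paper.

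Your sketch goes well beyond what the paper offers and is broadly faithful to the Harish-Chandra/Schmid picture. One small correction: you say that ``the vanishing of $\Theta_{\lambda}$ on every non-compact Cartan subgroup \ldots\ is built into the construction'', but this is not quite right. The construction via Cayley transforms extends $\Theta_{\lambda}$ to the non-compact Cartan subgroups by a nonzero combination of exponentials, and the relevant fact is not pointwise vanishing there but rather the bound $\sup_{g \in \G^{\reg}} |\D(g)|^{1/2}|\Theta_{\lambda}(g)| < \infty$ (exactly the estimate recorded in Remark~\ref{RemarkDiscreteSeries} and used in the converse direction in Theorem~\ref{TheoremDiscreteSeries2}). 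It is this boundedness, together with the eigendistribution property and the behaviour on the compact Cartan, that pins down the discrete series character; the non-compact Cartan values are typically nonzero.
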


\begin{rema}

As proved in \cite{HAR5}, for every discrete series $\Pi$ of $\G$ with Harish-Chandra parameter $\lambda$, we get:
\begin{equation*}
\sup\limits_{g \in \G^{\reg}} |\D(g)|^{\frac{1}{2}} |\Theta_{\lambda}(g)| < \infty.
\end{equation*}

\label{RemarkDiscreteSeries}

\end{rema}

\noindent The previous properties of $\Theta_{\lambda}$ characterize the discrete series characters inside the space of invariant distributions of $\G$. More precisely, as proved in \cite[Lemma~44]{HAR5}, we have the following result.

\begin{theo}

Let $\Theta_{\lambda}$ be $\G$-invariant distribution on $\G$ such that:
\begin{enumerate}
\item $z\Theta_{\lambda} = \gamma(\lambda)(z)\Theta_{\lambda}, z \in \Z(\mathscr{U}(\mathfrak{g}_{\mathbb{C}}))$,
\item $\sup\limits_{g \in \G^{\reg}} |\D(g)|^{\frac{1}{2}} |\Theta_{\lambda}(g)| < \infty$,
\item $\Theta_{\lambda} = 0$ pointwise on $\H^{\reg}$ .
\end{enumerate}
Then, $\Theta_{\lambda} = 0$.

\label{TheoremDiscreteSeries2}

\end{theo}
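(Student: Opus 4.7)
\medskip

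\noindent\textbf{Proof proposal.} The plan is to reduce the statement to the classical rigidity theory of invariant eigendistributions on a real reductive group. First, by Harish-Chandra's regularity theorem (\cite[Theorem~2]{HAR4}, already invoked in Section~\ref{TransferEigenDistributions}), hypothesis (1) implies that $\Theta_{\lambda}$ is represented by a locally integrable function on $\G$, analytic on $\G^{\reg}$, so that condition (3) makes literal sense. On the regular part of any Cartan subgroup $\H_{i} = \T_{i}\A_{i}$, the $\Z(\mathscr{U}(\mathfrak{g}_{\mathbb{C}}))$-eigenvalue equation together with $\G$-invariance forces $\Delta_{\H_{i}} \cdot \Theta_{\lambda}$ to be, on each connected component, a finite linear combination of exponentials
\begin{equation*}
h_{i} \longmapsto h_{i}^{w\lambda}, \qquad w \in \mathscr{W}(\mathfrak{g}_{\mathbb{C}}, \mathfrak{h}_{i, \mathbb{C}}),
\end{equation*}
with scalar coefficients, where $\Delta_{\H_{i}}$ denotes the usual Weyl denominator on $\H_{i}$.

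Next, the temperedness bound (2) severely restricts which exponentials can appear. Decomposing $h_{i} = t_{i}a_{i}$ with $t_{i} \in \T_{i}$ and $a_{i} \in \A_{i}$, the factor $|\D(h_{i})|^{\frac{1}{2}}$ grows at most polynomially along $\A_{i}$, while $a_{i}^{w\lambda}$ would grow exponentially whenever the real part of $(w\lambda)_{|\mathfrak{a}_{i}}$ is nonzero in some direction. Hence every such $w$ must contribute with coefficient zero, cutting the space of allowed tempered solutions on each Cartan down to a finite-dimensional space controlled essentially by the Weyl group $\mathscr{W}(\mathfrak{k}_{i, \mathbb{C}}, \mathfrak{t}_{i, \mathbb{C}})$ of the compact factor $\T_{i}$ (in the spirit of the discrete-series formula of Theorem \ref{TheoremDiscreteSeries}).

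Finally, condition (3) kills all the Fourier coefficients of this tempered expansion on the compact Cartan $\H$. The key remaining step is to propagate this vanishing from $\H$ to every other Cartan $\H_{i}$. This is achieved by Harish-Chandra's matching (jump) relations: crossing a singular wall in $\H$ corresponding to a noncompact imaginary root $\alpha$, the jump of $\Theta_{\lambda}$ is rigidly expressed, via the Cayley transform $c(\alpha)$, in terms of the restriction of $\Theta_{\lambda}$ to the adjacent Cartan $\H^{\{\alpha\}}$; combined with the temperedness reduction of the previous paragraph, vanishing on $\H$ forces vanishing of the tempered exponential coefficients on $\H^{\{\alpha\}}$. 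Iterating along chains of Cayley transforms reaches every conjugacy class of Cartan subgroups of $\G$. Since the union of regular parts of all Cartans is dense in $\G^{\reg}$ and $\Theta_{\lambda}$ is analytic there, we conclude that $\Theta_{\lambda} \equiv 0$ as a function, hence as a distribution.

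The main obstacle is the third step: the matching relations live on the singular set and require delicate control, and one must use the temperedness bound (2) to ensure that the matching system admits a unique solution once the boundary data on the compact Cartan have been prescribed to vanish. This is exactly the content of Harish-Chandra's \cite[Lemma~44]{HAR5}, which the above plan reproduces in outline.
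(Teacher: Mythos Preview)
The paper does not supply its own proof of this theorem; it merely records the statement and attributes it to Harish-Chandra \cite[Lemma~44]{HAR5}. Your proposal is therefore a sketch of Harish-Chandra's original argument rather than a comparison against anything in the paper, and as such it is a faithful outline: regularity, exponential expansion of $\Delta_{\H_i}\Theta_\lambda$ on each Cartan, temperedness to cut down the allowed exponents, and propagation of vanishing from the compact Cartan to all others via the matching conditions across Cayley transforms.

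One inaccuracy worth correcting: you write that $|\D(h_i)|^{1/2}$ ``grows at most polynomially along $\A_i$,'' but in fact $|\D(h_i)|^{1/2}$ is, up to a bounded factor, $|\Delta_{\H_i}(h_i)|$ and grows exponentially along the split directions. The correct reading of hypothesis~(2) is that the \emph{numerator} $\Delta_{\H_i}\cdot\Theta_\lambda$ is bounded on each Cartan; linear independence of characters then forces each exponential $h_i^{w\lambda}$ appearing with nonzero coefficient to be bounded on $\A_i$, which is the constraint you want. With that adjustment your sketch is sound.
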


\noindent The previous theorem will be central for us in Section \ref{ProofConjectureDS} to prove the conjecture \ref{ConjectureHC} for discrete series representations in the equal rank case. We now recall a key result of A. Paul for unitary groups. Let $(\G, \G') = (\U(p, q), \U(r, s))$ be a dual pair of unitary groups in $\Sp(2(p+q)(r+s), \mathbb{R})$. As explained in \cite[Section~1.2]{PAUL}, the double cover of $\widetilde{\U}(p, q)$ is isomorphic to
\begin{equation}
\widetilde{\U}(p, q) \approx \left\{(g, \xi) \in \U(p, q) \times \mathbb{C}^{*}, \xi^{2} = \det(g)^{r-s}\right\}.
\label{NatureDoubleCoverU}
\end{equation}
In particular, all the genuine admissible representations of $\widetilde{\U}(p, q)$ are the form $\Pi \otimes \det^{\frac{r-s}{2}}$, where $\det^{\frac{r-s}{2}}$ is the genuine character of $\widetilde{\U}(p, q)$ given by $\det^{\frac{r-s}{2}}(g, \xi) = \xi$ and $\Pi$ is an admissible representation of $\U(p, q)$. From now on, we fix $p$ and $q$ and let $r$ and $s$ vary under the condition that $p+q = r+s$. In particular, under this condition, it follows from Equation \eqref{NatureDoubleCoverU} that the double cover $\U(p, q)$ stays the same when $r$ and $s$ vary.

\noindent In \cite[Section~6]{PAUL}, A. Paul proved the following theorem:

\begin{theo}

For every genuine irreducible admissible representation $(\Pi, \mathscr{H}_{\Pi})$ of $\widetilde{\U}(p, q)$, there exists a unique pair of integers $(r, s) = (r_{\Pi}, s_{\Pi})$ such that $p+q = r+s$ with $\theta_{r, s}(\Pi) \neq 0$.

\end{theo}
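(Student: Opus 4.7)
My approach is to reduce both existence and uniqueness to a combinatorial inversion at the level of lowest $\widetilde{\K}$-types, via the Fock-model decomposition of the Weil representation. The starting observation is that, by Vogan's classification, every genuine irreducible admissible representation $\Pi$ of $\widetilde{\U}(p,q)$ is determined up to isomorphism by a lowest $\widetilde{\K}$-type $\nu_{\Pi}$ together with additional Langlands data; and that, by Howe's duality (Section~\ref{SectionCauchyHarishChandra}), the non-vanishing of $\theta_{r,s}(\Pi)$ is equivalent to $\Pi$ occurring as a $\widetilde{\U}(p,q)$-quotient of the smooth Weil representation $\omega^{\infty}_{r,s}$ attached to the pair $(\U(p,q),\U(r,s))$. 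I would fix $\nu_{\Pi}$ from the outset and encode the candidate pair $(r,s)$ as a function of its highest weight.

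The core step is the analysis of $\omega_{r,s}$ in the Fock model: as a $\widetilde{\K}\times\widetilde{\K}'$-module it decomposes, via the Howe--Kashiwara--Vergne formula, into joint harmonics indexed by pairs $(\nu,\nu')$ of dominant highest weights satisfying explicit interlacing inequalities whose form depends on $(p,q,r,s)$. If $\theta_{r,s}(\Pi)\neq 0$, then $\nu_{\Pi}$ must fit into one of these interlacing patterns with some dominant $\nu'$ for $\widetilde{\U}(r)\times\widetilde{\U}(s)$. I would then show that, with $r+s=p+q$ fixed and $\nu_{\Pi}$ given, this system admits exactly one pair $(r,s)$: once the coordinates of $\nu_{\Pi}$ are shifted by $\rho-2\rho(\mathfrak{k})$ and adjusted for the metaplectic twist $\det^{(r-s)/2}$ from \eqref{NatureDoubleCoverU}, the splitting of the shifted entries into positive and negative blocks forces the values of $r$ and $s$ uniquely, yielding uniqueness.

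For existence, starting from the unique candidate pair $(r,s)$ produced by this combinatorial inversion, I would construct a non-zero joint-harmonic intertwiner $\omega^{\infty}_{r,s}\to\nu_{\Pi}\boxtimes\nu'$ and extend it by Howe's theorem to a non-zero $\widetilde{\G}\cdot\widetilde{\G'}$-equivariant quotient $\Pi\otimes\Pi'_{1}$ of $\omega^{\infty}_{r,s}$, whence $\theta_{r,s}(\Pi)\neq 0$. The principal obstacle is the combinatorial inversion itself: the interlacing inequalities break into several cases depending on the signs of the shifted entries of $\nu_{\Pi}$, and unique solvability must be verified case by case, including for non-tempered $\Pi$ where one has to use Langlands data beyond the lowest $\widetilde{\K}$-type. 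The equal-rank hypothesis $p+q=r+s$ is essential here: it removes the slack that produces the whole range of pairs typical of ``first occurrence'' results in unequal rank, leaving exactly one valid signature $(r_{\Pi},s_{\Pi})$.
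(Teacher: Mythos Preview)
The paper does not prove this theorem; it merely quotes it as a result of A.~Paul, with the attribution ``In \cite[Section~6]{PAUL}, A.~Paul proved the following theorem.'' There is therefore no argument in the paper to compare your proposal against.

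That said, your outline is broadly in the spirit of Paul's original proof, which also proceeds via the joint-harmonic decomposition of $\omega_{r,s}$ restricted to $\widetilde{\K}\times\widetilde{\K}'$ and the resulting combinatorics of lowest $\widetilde{\K}$-types, combined with the Langlands classification and an induction principle. Your uniqueness sketch is on the right track: once the entries of the lowest $\widetilde{\K}$-type are shifted appropriately, the sign pattern does pin down $(r,s)$ in the equal-rank situation.

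Your existence argument, however, has a genuine gap. Producing a non-zero joint-harmonic intertwiner $\omega^{\infty}_{r,s}\to\nu_{\Pi}\boxtimes\nu'$ only shows that the $\widetilde{\K}$-type $\nu_{\Pi}$ occurs in $\omega_{r,s}$; it does not show that the specific representation $\Pi$ occurs as a $\widetilde{\G}$-quotient of $\omega^{\infty}_{r,s}$. Different irreducible admissible representations can share the same lowest $\widetilde{\K}$-type, and Howe's theorem does not let you ``extend'' a $\widetilde{\K}\times\widetilde{\K}'$-map to a $\widetilde{\G}\times\widetilde{\G}'$-quotient with prescribed $\widetilde{\G}$-constituent $\Pi$. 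Paul's actual argument requires substantially more: she uses the induction principle to reduce to smaller dual pairs and tracks the full Langlands data (not only the lowest $\widetilde{\K}$-type) through the correspondence. Your acknowledgement that ``for non-tempered $\Pi$ one has to use Langlands data beyond the lowest $\widetilde{\K}$-type'' is correct, but this is not a peripheral case---it is the heart of the matter, and the proposal as written does not indicate how that step would be carried out.
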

 
\noindent She also obtained more precise results for discrete series representations (see \cite[Theorem~6.1]{PAUL} or \cite[Theorem~2.7]{PAUL2}).

\begin{nota}

 We fix a basis $\{e_{1}, \ldots, e_{n}\}$ of $\mathfrak{h}^{*}$. In particular, every linear form $\lambda$ on $\mathfrak{h}$ can be written as $\lambda = \sum\limits_{i=1}^{n} \lambda_{i}e_{i}$ or also as $\lambda = (\lambda_{1}, \ldots, \lambda_{n})$.
 
 \end{nota}

\begin{theo}

Let $\Pi$ be a discrete series representation of $\widetilde{\U}(p, q)$, the corresponding representation $\theta_{r_{\Pi}, s_{\Pi}}(\Pi)$ is a discrete series representation of $\widetilde{\U}(r_{\Pi}, s_{\Pi})$.

\noindent More precisely, if the Harish-Chandra parameter of $\Pi$ is of the form 
\begin{equation*}
\lambda = \lambda_{a, b} = (\alpha_{1}, \ldots, \alpha_{a}, \beta_{1}, \ldots, \beta_{p-a}, \gamma_{1}, \ldots, \gamma_{b}, \delta_{1}, \ldots, \delta_{q-b}),
\end{equation*}
with $\alpha_{i}, \beta_{j}, \gamma_{k}, \delta_{l} \in \mathbb{Z} + \frac{1}{2}$ such that $\alpha_{1} > \ldots > \alpha_{a} > 0 > \beta_{1} > \ldots > \beta_{p-a}$ and $\gamma_{1} > \ldots > \gamma_{b} > 0 > \delta_{1} > \ldots > \delta_{q-b}$, then $(r_{\Pi}, s_{\Pi}) = (a + q - b, b + p - a)$ and the corresponding Harish-Chandra parameter $\lambda' = \lambda'_{a, b}$ of $\theta_{r_{\Pi}, s_{\Pi}}(\Pi)$ is of the form:
\begin{equation*}
\lambda'_{a, b} = (\alpha_{1}, \ldots, \alpha_{a}, \delta_{1}, \ldots, \delta_{q-b}, \gamma_{1}, \ldots, \gamma_{b}, \beta_{1}, \ldots, \beta_{p-a}).
\end{equation*}

\label{TheoremDSHCP}

\end{theo}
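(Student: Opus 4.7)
The plan is to verify that $\theta(\Pi)$ satisfies the three hypotheses of Harish-Chandra's uniqueness theorem (Theorem~\ref{TheoremDiscreteSeries2}) with parameter $\lambda'_{a,b}$: the correct infinitesimal character, the boundedness of $|\D|^{1/2}|\Theta|$, and matching of the character of $\Pi_{\lambda'_{a,b}}$ on the compact Cartan $\widetilde{\H'}^{\reg}$. Granting these three facts, the difference $\Theta_{\theta(\Pi)} - \Theta_{\Pi_{\lambda'_{a,b}}}$ vanishes, forcing the identification $\theta(\Pi) = \Pi_{\lambda'_{a,b}}$.

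For the infinitesimal character, one expects $\chi_{\theta(\Pi)} = \chi_{\lambda_{\Pi}}$ for some $\lambda_{\Pi}$ conjugate to $\lambda$ under $\mathscr{S}_{r+s}$ (this is essentially the dual-pair Casimir identity invoked in \cite[Theorem~1.3]{TOM4} and used in the introduction); the specific permutation is then pinned down by a direct computation relating the actions of $\Z(\mathscr{U}(\mathfrak{g}_{\mathbb{C}}))$ and $\Z(\mathscr{U}(\mathfrak{g}'_{\mathbb{C}}))$ on $\omega^{\infty}$, together with careful bookkeeping of the $\rho$- and $\rho'$-shifts. For the character on $\widetilde{\H'}^{\reg}$ and for identifying the lowest $\widetilde{\K'}$-type, I would use Li's embedding $\Pi \hookrightarrow \omega^{\infty}$ recalled in Section~\ref{CommutativeDiagram}, so that $\theta(\Pi)$ is realized as the $\Pi$-isotypic subrepresentation of $\omega^{\infty}|_{\widetilde{\G'}}$. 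The lowest $\widetilde{\K}$-type $\tau_{\nu}$ of $\Pi_{\lambda_{a,b}}$ (with $\nu = \lambda_{a,b} + \rho - 2\rho(\mathfrak{k})$ by Theorem~\ref{TheoremDiscreteSeries}) generates, via Howe duality for the compact dual pair $(\widetilde{\K}, \widetilde{\U}(r,s))$ entering the see-saw with the outer pair $(\widetilde{\U}(p,q), \widetilde{\U}(r) \times \widetilde{\U}(s))$, a unique irreducible $\widetilde{\K'}$-highest weight $\nu'$. Reading this off from the Littlewood--Richardson-type rule that governs the compact-side correspondence yields $\nu' = \lambda'_{a,b} + \rho' - 2\rho'(\mathfrak{k}')$: the positive entries $\alpha$ (length $a$) and negative entries $\delta$ (length $q-b$) recombine on the $\widetilde{\U}(r_{\Pi})$-factor with $r_{\Pi} = a + q - b$, and symmetrically $s_{\Pi} = b + p - a$. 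Temperedness of $\theta(\Pi)$ (hence the $|\D|^{1/2}|\Theta|$ bound) follows because its matrix coefficients are paired Gaussians against discrete-series matrix coefficients of $\Pi$, which are square-integrable on $\widetilde{\G'}$ in the equal-rank case.

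The main obstacle lies in the see-saw analysis of the $\tau_{\nu}$-isotypic component: one must rule out accidental cancellations in the joint Howe decomposition of $\omega$ that would shift the lowest $\widetilde{\K'}$-type away from $\tau_{\nu'}$, and one must carefully track the $\det^{(r-s)/2}$-twist in the cover $\widetilde{\U}(p,q)$ of Equation~\eqref{NatureDoubleCoverU}. These half-integer shifts are precisely what produce the $\mathbb{Z} + \frac{1}{2}$-regularity appearing in the statement and govern the compatibility of the two covers under the chosen lift $\check{p}: \check{\H}'_{\mathbb{C}} \to \widetilde{\H'}_{\mathbb{C}}$ used throughout Section~\ref{SectionCauchyExplicit}.
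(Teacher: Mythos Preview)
The paper does not prove this theorem: it is quoted verbatim from Paul's work (\cite[Theorem~6.1]{PAUL}, \cite[Theorem~2.7]{PAUL2}) and used as a black-box input for Section~\ref{ProofConjectureDS}. So there is no in-paper proof to compare against; you are effectively proposing an alternative to Paul's original argument.

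Your outline has two genuine gaps. First, identifying the lowest $\widetilde{\K'}$-type $\nu'$ via the see-saw does not determine $\Theta_{\theta(\Pi)}$ on $\widetilde{\H'}^{\reg}$: the character on the compact Cartan is a full alternating sum over $\mathscr{W}(\mathfrak{k}')$ as in Theorem~\ref{TheoremDiscreteSeries}, and knowing only the lowest $\widetilde{\K'}$-type does not pin this down unless you already know $\theta(\Pi)$ is a discrete series---which is exactly what you are trying to prove. So hypothesis~(3) of Theorem~\ref{TheoremDiscreteSeries2} is not verified. Second, your argument for hypothesis~(2) is not an argument: the phrase ``paired Gaussians against discrete-series matrix coefficients'' does not yield square-integrability of matrix coefficients of $\theta(\Pi)$ on $\widetilde{\G'}$, and Li's Theorem~\ref{JianShuLi} gives unitarity, not temperedness. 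In the equal-rank (non-stable) range there is no soft reason for the $L^2$ bound, and indeed Paul's proof does not proceed this way---she works through the Langlands classification and an induction-in-stages analysis of the theta lift rather than a character-theoretic characterization.

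It is also worth noting that your strategy mirrors what the paper does in Section~\ref{ProofConjectureDS}, but applied to the wrong object: the paper verifies hypotheses (1)--(3) of Theorem~\ref{TheoremDiscreteSeries2} for $\Chc^{*}(\Theta_{\Pi})$ (Propositions~\ref{ImportantProposition1}, \ref{ImportantProposition2}, Lemma~\ref{ImportantLemma1}), where the explicit integral formulas of Section~\ref{SectionCauchyExplicit} make the computation tractable, and only then invokes Paul's theorem to identify the resulting parameter with $\lambda'_{a,b}$. Attempting the same for $\Theta_{\theta(\Pi)}$ directly requires independent access to that character, which is precisely what Paul's theorem supplies.
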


\section{Proof of Conjecture \ref{ConjectureHC} for discrete series representations in the equal rank case}

\label{ProofConjectureDS}

In this section, we are interested in the dual pair $(\G, \G') = (\U(p, q), \U(r, s))$ such that $p+q = r+s$. Without loss of generality, we assume that $p \leq q$. In particular, the number of Cartan subgroups of $\G$, up to conjugation, is $p+1$. We denote by $n = p+q$. Let $(V = \mathbb{C}^{p+q}, \left(\cdot, \cdot\right))$ and $(V' = \mathbb{C}^{r+s}, \left(\cdot, \cdot\right)')$ be the hermitian and skew-hermitian spaces corresponding to $\G$ and $\G'$ respectively. In this case, the space $W = \Hom(V', V) = \M((r+s) \times (p+q), \mathbb{C})$ and for every $w \in W$, there exists a unique element $w^{*} \in \Hom(V, V') = \M((p+q) \times (r+s), \mathbb{C})$ such that:
\begin{equation*}
\left(w(v'), v\right) = \left(v', w^{*}(v)\right)', \qquad (v \in \V, v' \in \V').
\end{equation*}
One can prove that $w^{*} = i\Id_{p, q}\overline{w}^{t}\Id_{r, s}$ and the symplectic form $\langle\cdot, \cdot\rangle$ on $W$ 
\begin{equation*}
\langle w, w' \rangle = \Re(\tr(w'^{*}w)) = -\Im(\tr(\Id_{p, q}\overline{w'}^{t}\Id_{r, s}w)) \qquad (w, w' \in W).
\end{equation*}
Let $V_{i} = V'_{i} = \mathbb{C}e_{i}$. The subspaces $\mathfrak{h}$ and $\mathfrak{h}'$ of $\mathfrak{g}$ and $\mathfrak{g}'$ respectively given by:
\begin{equation*}
\mathfrak{h} = \mathfrak{h}' = \left\{y = (iX_{1}, \ldots, iX_{n}), X_{i} \in \mathbb{R}\right\}
\end{equation*}
are Cartan subalgebras. Moreover, we get:
\begin{equation*}
W^{\mathfrak{h}} = \bigoplus\limits_{i=1}^{n} \Hom(V_{i}, V'_{i}) = \bigoplus\limits_{i=1}^{n} i\mathbb{R}\E_{i, i}.
\end{equation*}

\noindent Let $\Pi$ be a discrete series of $\widetilde{\U}(p, q)$, $\Theta_{\Pi}$ be the corresponding element of $\mathscr{D}'(\widetilde{\G})^{\widetilde{\G}}$, $\Theta_{\Pi}$ the corresponding locally integrable function on $\widetilde{\G}$ such that $\Theta_{\Pi} = T_{\Theta_{\Pi}}$ and $\chi_{\Pi}$ the infinitesimal character of $\Pi$.

\noindent As recalled in Theorem \ref{TheoremTransferOfEigendistributions}, $\Chc^{*}(\Theta_{\Pi})$ is an element of $\Eig(\widetilde{\G'})^{\widetilde{\G'}}$. According to \cite[Theorem~2]{HAR4}, the distribution $\Theta'_{\Pi} = \Chc^{*}(\Theta_{\Pi})$ is given by a locally integrable function $\Theta'_{\Pi}$ on $\widetilde{\G'}$, analytic on $\widetilde{\G'}^{\reg}$.

\begin{nota}

From now on, we fix an element $\widetilde{-1}$ be an element of $\pi^{-1}(\{-1\})$. Let $c: \mathfrak{g}^{c} \to \G^{c}$ the Cayley transform, where $\mathfrak{g}^{c}$ and $\G^{c}$ are defined as in Section \ref{SectionCauchyHarishChandra}. As explained in \cite[Lemma~3.5]{TOM7}, there exists a unique smooth map $\tilde{c}: \mathfrak{g}^{c} \to \widetilde{\G}^{c}$ such that $\pi \circ \tilde{c} = c$ and $\tilde{c}(0) = \widetilde{-1}$.

\label{CayleyCovering}

\end{nota}

\begin{theo}

The value of $\Theta'_{\Pi}$ on the compact Cartan $\widetilde{\H'} = \widetilde{\H'}(\emptyset)$ is given by the following formula:
\begin{equation*}
\Delta_{\Psi'}(\check{h}') \Theta'_{\Pi}(\check{p}(\check{h}')) = \cfrac{\C}{|\mathscr{W}(\H)|} \sum\limits_{\sigma \in \mathscr{S}_{r+s}} \varepsilon(\sigma) \det^{\frac{1}{2}}(\sigma(\check{h}'))_{W^{\mathfrak{h}'}} \lim\limits_{\underset{r \in \E_{\sigma, \emptyset}}{r \to 1}} \displaystyle\int_{\check{\H}} \cfrac{\Theta_{\Pi}(\check{p}(\check{h}))\Delta(\check{h})\det^{\frac{1}{2}}(\check{h})}{\det(1 - p(\check{h})rp(\check{h}'))_{\sigma\W^{\mathfrak{h}}}} d\check{h} \qquad (\check{h}' \in \check{\H}'^{\reg}),
\end{equation*}
where $\H = \H(\emptyset)$ is the compact Cartan of $\G$ and $\C = \overline{\chi_{\Pi}(\widetilde{-1})}\Theta(\widetilde{-1})(-1)^{u}$.
\label{TheoremOmegaCompact}
\end{theo}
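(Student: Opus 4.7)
The plan is to evaluate $\Chc^*(\Theta_\Pi)$ against an arbitrary test function $\varphi\in\mathscr{C}^\infty_c(\widetilde{\G'})$ whose support is confined to the conjugacy neighborhood $\widetilde{\G'}\cdot\widetilde{\H'}^{\reg}$ of the compact Cartan of $\G'$, and to match the resulting expression against the Weyl integration formula on the $\widetilde{\G'}$-side. Starting from Remark \ref{ValueChcThetaPi} I expand
\begin{equation*}
\Chc^*(\Theta_\Pi)(\varphi)=\sum_{i=0}^{p}\frac{1}{|\mathscr{W}(\H(\S_i))|}\int_{\widetilde{\H}(\S_i)^{\reg}}\Theta_\Pi(\tilde h)|\det(1-\Ad(\tilde h^{-1}))_{\mathfrak{g}/\mathfrak{h}(\S_i)}|\Chc_{\tilde h}(\varphi)d\tilde h,
\end{equation*}
and the first task is to show that only the compact-Cartan term ($i=0$) contributes.

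The vanishing of the terms with $i\geq 1$ is obtained by substituting the explicit formula \eqref{ChcNonCompact} for $\Chc_{\tilde h}(\varphi)$. Its integrand contains the Harish-Chandra transform $\varphi^{\widetilde{\K'}}_{\widetilde{\N'}(\S_i)}$ evaluated at $g\tilde t_1\tilde a g^{-1}\tilde y\in\widetilde{\L'}(\S_i)$. Since the split part $\tilde a\in\widetilde{\A}(\S_i)$ embeds as a non-compact real-diagonal subgroup of $\GL(\X_i)\subseteq\L'(\S_i)\subseteq\G'$, the element $g\tilde t_1\tilde a g^{-1}\tilde y\tilde n$ has a non-trivial hyperbolic semisimple component for every $\tilde n\in\widetilde{\N'}(\S_i)$, hence is not $\widetilde{\G'}$-conjugate to any element of the compact torus $\widetilde{\H'}^{\reg}$ where $\varphi$ is supported; so $\varphi^{\widetilde{\K'}}_{\widetilde{\N'}(\S_i)}$ vanishes on the integration domain and the contribution is zero.

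For the surviving term $i=0$ the distribution $\Chc_{\tilde h}(\varphi)$ reduces, up to the normalization coming from $\A''_0=\{\pm 1\}$, to $\int_{\widetilde{\G'}}\Theta(\tilde h\tilde g')\varphi(\tilde g')d\tilde g'$, which is evaluated by Theorem \ref{TheoremTheta}. The support hypothesis on $\varphi$ combined with Remark \ref{RemarkWeylIntegration2} kills all the summands with $\S\neq\emptyset$ on the $\G'$-side as well, so only the $\sum_{\sigma\in\mathscr{W}(\H'_{\mathbb{C}})}\M_\emptyset(\sigma)$-terms remain. Using $n'-n=0\in 2\mathbb{Z}$ to fix $k=-1$ in Proposition \ref{PropositionTheta} and plugging the resulting expression for $\Chc_{\tilde h}(\varphi)$ into $I_0$, I obtain a double integral over $\check{\H}\times\check{\H}'$ with kernel $\det(1-p(\check h)rp(\check h'))_{\sigma\W^{\mathfrak{h}}}^{-1}$, carrying the factors $\Theta_\Pi(\check p(\check h))\Delta(\check h)\det^{1/2}(\check h)_{\W^{\mathfrak{h}}}$ on the $\G$-side and $\varepsilon_{\Phi'_{\emptyset,\mathbb{R}}}(\check h')\mathscr{H}_\emptyset\varphi(\check h')\det^{1/2}(\sigma(\check h'))_{\W^{\mathfrak{h}'}}$ on the $\G'$-side.

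To conclude I apply Weyl integration (Remark \ref{RemarkWeylIntegration2}) on $\widetilde{\G'}$ to the other side of the identity, writing
\begin{equation*}
\int_{\widetilde{\G'}}\Theta'_\Pi(\tilde g')\varphi(\tilde g')d\tilde g'=m_\emptyset\int_{\check{\H}'}\Theta'_\Pi(\check p(\check h'))\varepsilon_{\Psi'_{\emptyset,\mathbb{R}}}(\check h')\Delta_{\Phi'}(\check h')\mathscr{H}_\emptyset\varphi(\check h')d\check h'.
\end{equation*}
As $\varphi$ varies through compactly supported functions with the prescribed support, $\mathscr{H}_\emptyset\varphi$ exhausts a dense subspace of class functions on $\check{\H}'^{\reg}$, so identifying the two expressions pointwise and using the identity $\Delta_{\Phi'}(\check h')\Delta_{\Psi'}(\check h')=|\Delta_{\G'}(\check h')|^2$ of Remark \ref{DeltaSquare} rearranges the result into the stated formula. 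The principal technical obstacle is the bookkeeping of the sign factors $\varepsilon_{\Phi'_{\emptyset,\mathbb{R}}}$ versus $\varepsilon_{\Psi'_{\emptyset,\mathbb{R}}}$, of the Weyl-group sign $\sign(\sigma)$ hidden inside $\M_\emptyset(\sigma)$, and of the measure normalizations $m_\emptyset$ and $|\mathscr{W}(\H)|$; extracting the constant $\C=\overline{\chi_\Pi(\widetilde{-1})}\Theta(\widetilde{-1})(-1)^u$ requires careful tracking of the dependence on the lift $\widetilde{-1}$ from Notation \ref{CayleyCovering} via the central-character identity $\Theta_\Pi(\widetilde{-1}\tilde h)=\chi_\Pi(\widetilde{-1})\Theta_\Pi(\tilde h)$, while preserving the limit $r\to 1$ through $\E_{\sigma,\emptyset}$ throughout the substitution.
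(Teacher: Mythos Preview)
Your proposal is correct and follows essentially the same strategy as the paper: restrict to test functions supported in $\widetilde{\G'}\cdot\widetilde{\H'}^{\reg}$, reduce the sum over Cartan subgroups of $\G$ to the compact one, invoke Theorem~\ref{TheoremTheta} (with $k=-1$ since $n'=n$) to expand $\int_{\widetilde{\G'}}\Theta(\check p(\check h)\tilde g')\varphi(\tilde g')d\tilde g'$, and then match against the Weyl integration formula on the $\widetilde{\G'}$-side. The paper's own proof simply asserts the reduction to $i=0$ without comment, whereas you supply the justification via the parabolic-descent formula~\eqref{ChcNonCompact}: this is a welcome addition, and your eigenvalue argument (the block-triangular action of $\P'(\S_i)$ on $\V'$ forces $\tilde l'\tilde n$ to inherit the off-unit-circle eigenvalues of $\tilde t_1\tilde a$) is the right one, with the minor caveat that it holds only for $\tilde a\neq 1$, which suffices since that locus has full measure in $\widetilde{\A}(\S_i)$.
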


\begin{proof}

Let $\varphi$ be a function in $\mathscr{C}^{\infty}_{c}(\widetilde{\G'})$. According to Remark \ref{ValueChcThetaPi}, we get that 
\begin{equation*}
\Theta'_{\Pi}(\varphi) = \sum\limits_{i=0}^{p} \cfrac{1}{|\mathscr{W}(\H(\S_{i}))|} \displaystyle\int_{\widetilde{\H}(\S_{i})^{\reg}} \Theta_{\Pi}(\tilde{h}_{i})|\det(1-\Ad(\tilde{h}^{-1}_{i}))_{\mathfrak{g}/\mathfrak{h}_{i}}| \Chc(\varphi)(\tilde{h}_{i}) d\tilde{h}_{i}.
\end{equation*}
where $\H(\S_{i})$ is a set of Cartan subgroups as in Remark \ref{ExampleUnitary}, and let $\H = \H(\emptyset)$ the compact Cartan of $\G$. Now, if we assume that $\supp(\varphi) \subseteq \widetilde{\G'} \cdot \widetilde{\H'}$, then 
\begin{equation*}
\Theta'_{\Pi}(\varphi) = \cfrac{1}{|\mathscr{W}(\H)|} \displaystyle\int_{\widetilde{\H}^{\reg}} \Theta_{\Pi}(\tilde{h})|\det(1-\Ad(\tilde{h}^{-1}))_{\mathfrak{g}/\mathfrak{h}}| \Chc(\varphi)(\tilde{h}) d\tilde{h}
\end{equation*}
According to \cite[Equation~8]{TOM4} and Theorem \ref{TheoremTheta}, we get:
\begin{eqnarray*}
\Theta'_{\Pi}(\varphi) & = & \cfrac{1}{|\mathscr{W}(\H)|} \displaystyle\int_{\widetilde{\H}^{\reg}} \Theta_{\Pi}(\tilde{h})|\det(1-\Ad(\tilde{h}^{-1}_{i}))_{\mathfrak{g}/\mathfrak{h}}| \Chc(\varphi)(\tilde{h}) d\tilde{h} \\ 
      & = & \cfrac{(-1)^{u}\C}{|\mathscr{W}(\H)|} \displaystyle\int_{\check{\H}^{\reg}} \Theta_{\Pi}(\check{p}(\check{h}))|\Delta_{\G}(\check{h})|^{2} \left(\displaystyle\int_{\widetilde{\G'}} \Theta(\check{p}(\check{h})\tilde{g}')\varphi(\tilde{g}') d\tilde{g}'\right)d\check{h} \\
            & = & \cfrac{(-1)^{u+1}\C}{|\mathscr{W}(\H)|} \displaystyle\int_{\check{\H}^{\reg}} \Theta_{\Pi}(\check{p}(\check{h}))\Delta_{\Psi}(\check{h})\det^{\frac{1}{2}}(\check{h}) \left(\det^{-\frac{1}{2}}(\check{h})\Delta_{\Psi}(\check{h})\displaystyle\int_{\widetilde{\G'}} \Theta(\check{p}(\check{h})\tilde{g}')\varphi(\tilde{g}')d\tilde{g}' \right) d\check{h} \\
               & = & \cfrac{-\C m_{0}}{|\mathscr{W}(\H)|}  \sum\limits_{\sigma \in \mathscr{S}_{r+s}} \varepsilon(\sigma) \lim\limits_{\underset{r \in \E_{\sigma, \emptyset}}{r \to 1}} \displaystyle\int_{\check{\H}^{\reg}} \Theta_{\Pi}(\check{p}(\check{h}))\Delta_{\Psi}(\check{h})\det^{\frac{1}{2}}(\check{h}) \displaystyle\int_{\check{\H}'} \cfrac{\det^{\frac{1}{2}}(\sigma^{-1}(\check{h}'))_{W^{\mathfrak{h}}}}{\det(1-p(\check{h})rp(\check{h}'))_{\sigma\W^{\mathfrak{h}}}} \mathscr{H}_{\emptyset}(\varphi)(\check{h}')d\check{h}' d\check{h} 
\end{eqnarray*}      
With such assumptions on the support of $\varphi$, we get using Equation \eqref{Equation3}
\begin{eqnarray*}
\Theta'_{\Pi}(\varphi) & = & \displaystyle\int_{\widetilde{\G'}} \Theta'_{\Pi}(\tilde{g}') \varphi(\tilde{g}')d\tilde{g}' = m_{0}\displaystyle\int_{\check{\H}'} \overline{\Delta_{\Psi'}(\check{h}')} \mathscr{H}_{\emptyset}(\Theta'_{\Pi}\varphi)(\check{h}') d\check{h}' \\
           & = & -m_{0} \displaystyle\int_{\check{\H}'} \Theta'_{\Pi}(\check{p}(\check{h}')) \Delta_{\Psi'}(\check{h}') \mathscr{H}_{\emptyset}(\varphi)(\check{h}') d\check{h}'
\end{eqnarray*}
By identifications, we get, up to a constant, that:
\begin{equation*}
\Delta_{\Psi'}(\check{h}') \Theta'_{\Pi}(\check{p}(\check{h}')) = \cfrac{\C}{|\mathscr{W}(\H)|} \sum\limits_{\sigma \in \mathscr{S}_{r+s}} \varepsilon(\sigma) \det^{\frac{1}{2}}(\sigma^{-1}(\check{h}'))_{W^{\mathfrak{h}}} \lim\limits_{\underset{r \in \E_{\sigma, \emptyset}}{r \to 1}} \displaystyle\int_{\check{\H}^{\reg}} \cfrac{\Theta_{\Pi}(\check{p}(\check{h}))\Delta_{\Psi}(\check{h})\det^{\frac{1}{2}}(\check{h})}{\det(1-p(\check{h})rp(\sigma(\check{h}')))_{W^{\mathfrak{h}}}} d\check{h} 
\end{equation*}
and the theorem follows.
\end{proof}    

\noindent We know that the set of roots for $(\mathfrak{g}, \mathfrak{h})$ is given by
\begin{equation*}
\left\{\pm(e_{i} - e_{j}), 1 \leq i < j \leq n\right\}.
\end{equation*}
Let $\K = \U(p) \times \U(q)$ be a maximal compact subgroup of $\G$. Let $\Psi(\mathfrak{k}) = \Psi(\mathfrak{k}_{\mathbb{C}}, \mathfrak{h}_{\mathbb{C}})$ be a set of compact positive roots given by:
\begin{equation*}
\Psi(\mathfrak{k}) = \left\{e_{i} - e_{j}, 1 \leq i < j \leq p\right\} \cup \left\{e_{i} - e_{j}, p+1 \leq i < j \leq n\right\}.
\end{equation*}
The compact Weyl group $\mathscr{W}(\mathfrak{k}) = \mathscr{W}(\K, \H)$ is $\mathscr{S}_{p} \times \mathscr{S}_{q}$. Let $\lambda = \sum\limits_{i=1}^{p+q} \lambda_{i}e_{i}$ be the Harish-Chandra parameter of $\Pi$. Using Theorem \ref{TheoremDiscreteSeries}, the value of $\Theta_{\Pi}$ on $\widetilde{\H}^{\reg}$ is given by:
\begin{equation*}
\Theta_{\Pi}(\check{p}(\check{h})) = (-1)^{\alpha_{p, q}}\sum\limits_{\beta \in \mathscr{S}_{p} \times \mathscr{S}_{q}} \varepsilon(\beta) \cfrac{(\beta\check{h})^{\lambda}}{\prod\limits_{\alpha > 0} (\check{h}^{\frac{\alpha}{2}} - \check{h}^{-\frac{\alpha}{2}})}, \qquad (\check{h}' \in \check{\H}^{\reg}),
\end{equation*}
with $\alpha_{p, q} = \frac{\dim(\G) - \dim(\K)}{2} = pq$. Using that $W^{\mathfrak{h}} = \bigoplus\limits_{i=1}^{n} \Hom(V'_{i}, V_{i})$, we get:
\begin{equation*}
\det(1-p(h)rp(h'))_{\sigma\W^{\mathfrak{h}}} = \prod\limits_{i=1}^{n}\left(1 - h_{i}(rh')^{-1}_{\sigma(i)}\right) = (-1)^{n}\prod\limits_{i=1}^{n} (rh')^{-1}_{\sigma(i)} \prod\limits_{i=1}^{n} \left(h_{i} - (rh')_{\sigma(i)}\right),
\end{equation*}
and 
\begin{equation*}
\det^{\frac{1}{2}}(\sigma^{-1}(\check{h}'))_{W^{\mathfrak{h}}} = \prod\limits_{i=1}^{n} h'^{-\frac{1}{2}}_{\sigma(i)}, \qquad \det^{\frac{1}{2}}(\check{h})_{W^{\mathfrak{h}}} = \prod\limits_{i=1}^{n} h^{\frac{1}{2}}_{i}.
\end{equation*}
To simplify the notations, we will denote by $\xi$ the element of $\mathfrak{h}^{*}_{\mathbb{C}}$ given by $\xi = \sum\limits_{i=1}^{n} \frac{1}{2}e_{i}$. 
We recall a basic Cauchy integral formula.

\begin{lemme}

Let $k \in \mathbb{Z}$ and $a \in \mathbb{C}^{*} \setminus \S^{1}$. Then,
\begin{equation*}
\cfrac{1}{2i\pi}\displaystyle\int_{\S^{1}} \cfrac{z^{k}}{z-a}dz = \begin{cases} a^{k} & \text{ if } k \geq 0 \text{ and } |a| < 1 \\ -a^{k} & \text{ if } k < 0 \text{ and } |a| > 1 \\ 0 & \text{ otherwise }
\end{cases}
\end{equation*}

\label{LemmaComplexIntegrals}

\end{lemme}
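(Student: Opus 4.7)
The plan is to treat this as a direct application of residue calculus, separating the four cases by the sign of $k$ and whether $a$ lies inside or outside the unit disk. The cleanest route is to expand $\frac{1}{z-a}$ as a geometric series on $\S^{1}$ and integrate term by term, using the elementary fact that $\frac{1}{2i\pi} \int_{\S^{1}} z^{m} \, dz = \delta_{m, -1}$ for $m \in \mathbb{Z}$.

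First I would dispose of the two cases with $|a| < 1$. Here, on $\S^{1}$ one has $|a/z| = |a| < 1$, and so
\begin{equation*}
\cfrac{1}{z-a} = \cfrac{1}{z} \cdot \cfrac{1}{1 - a/z} = \sum\limits_{n \geq 0} \cfrac{a^{n}}{z^{n+1}},
\end{equation*}
with uniform convergence on the contour. Multiplying by $z^{k}$ and integrating term by term gives
\begin{equation*}
\cfrac{1}{2i\pi} \displaystyle\int_{\S^{1}} \cfrac{z^{k}}{z-a} \, dz = \sum\limits_{n \geq 0} a^{n} \cdot \delta_{k-n-1, -1} = \sum\limits_{n \geq 0} a^{n} \delta_{k, n},
\end{equation*}
which evaluates to $a^{k}$ when $k \geq 0$ and to $0$ when $k < 0$, matching the first and last cases of the statement.

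For the two cases with $|a| > 1$, I would instead use $|z/a| = 1/|a| < 1$ on $\S^{1}$ and expand
\begin{equation*}
\cfrac{1}{z-a} = -\cfrac{1}{a} \cdot \cfrac{1}{1 - z/a} = -\sum\limits_{n \geq 0} \cfrac{z^{n}}{a^{n+1}},
\end{equation*}
again with uniform convergence. Multiplying by $z^{k}$ and integrating yields
\begin{equation*}
\cfrac{1}{2i\pi} \displaystyle\int_{\S^{1}} \cfrac{z^{k}}{z-a} \, dz = -\sum\limits_{n \geq 0} \cfrac{\delta_{k+n, -1}}{a^{n+1}},
\end{equation*}
which equals $-a^{k}$ when $k < 0$ (the only surviving term is $n = -k-1 \geq 0$) and $0$ when $k \geq 0$, completing the remaining two cases. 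There is no serious obstacle here; the only point to be careful about is justifying the term-by-term integration, which follows from uniform convergence of the geometric series on the compact contour $\S^{1}$ in each regime.
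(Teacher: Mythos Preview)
Your argument is correct in all four cases; the geometric series expansions converge uniformly on $\S^{1}$ in the indicated regimes, so termwise integration is justified, and the orthogonality relation $\frac{1}{2i\pi}\int_{\S^{1}} z^{m}\,dz = \delta_{m,-1}$ picks out exactly the claimed values. The paper itself states this lemma without proof, treating it as a standard Cauchy integral formula, so your write-up actually supplies more detail than the paper does.
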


\noindent For every $\check{h}' \in \check{\H}'^{\reg}$, we get from Theorem \ref{TheoremOmegaCompact}:
\begin{eqnarray*}
\Delta_{\Psi'}(\check{h}') \Theta'_{\Pi}(\check{p}(\check{h}')) & = & \widetilde{\C} \sum\limits_{\sigma \in \mathscr{S}_{r+s}} \sum\limits_{\beta \in \mathscr{S}_{p} \times \mathscr{S}_{q}} \varepsilon(\sigma) \varepsilon(\beta) \prod\limits_{i=1}^{n} h'^{-\frac{1}{2}}_{\sigma(i)} \prod\limits_{i=1}^{n} h'_{\sigma(i)}\lim\limits_{\underset{r \in \E_{\sigma, \emptyset}}{r \to 1}} \displaystyle\int_{\check{\H}} \cfrac{(\beta\check{h})^{\lambda}\prod\limits_{i=1}^{n} h^{\frac{1}{2}}_{i}}{\prod\limits_{i=1}^{n} (h_{i} - (rh')_{\sigma(i)})} d\check{h} \\
    & = & \widetilde{\C} \sum\limits_{\sigma \in \mathscr{S}_{r+s}} \sum\limits_{\beta \in \mathscr{S}_{p} \times \mathscr{S}_{q}} \varepsilon(\sigma) \varepsilon(\beta) \prod\limits_{i=1}^{n} h'^{\frac{1}{2}}_{\sigma(i)}\lim\limits_{\underset{r \in \E_{\sigma, \emptyset}}{r \to 1}} \displaystyle\int_{\check{\H}} \cfrac{(\beta\check{h})^{\lambda+ \xi}}{\prod\limits_{i=1}^{n} (h_{i} - (rh')_{\sigma(i)})} d\check{h} \\
    & = &  2 \widetilde{\C} \sum\limits_{\sigma \in \mathscr{S}_{r+s}} \sum\limits_{\beta \in \mathscr{S}_{p} \times \mathscr{S}_{q}} \varepsilon(\sigma) \varepsilon(\beta) \prod\limits_{i=1}^{n}  h'^{\frac{1}{2}}_{\sigma(i)}\lim\limits_{\underset{r \in \E_{\sigma, \emptyset}}{r \to 1}} \displaystyle\int_{\H} \cfrac{\prod\limits_{i=1}^{n} h^{\lambda_{i} + \frac{1}{2}}_{\beta^{-1}(i)}}{\prod\limits_{i=1}^{n} (h_{i} - (rh')_{\sigma(i)})} dh \\ 
         & = & \cfrac{2 \widetilde{\C} \prod\limits_{i=1}^{n} h'^{\frac{1}{2}}_{i}}{(2i\pi)^{n}} \sum\limits_{\sigma \in \mathscr{S}_{r+s}} \sum\limits_{\beta \in \mathscr{S}_{p} \times \mathscr{S}_{q}} \varepsilon(\sigma) \varepsilon(\beta) \lim\limits_{\underset{r \in \E_{\sigma, \emptyset}}{r \to 1}} \prod\limits_{i=1}^{n} \displaystyle\int_{\S^{1}}\cfrac{z^{\lambda_{i} - \frac{1}{2}}}{z - (rh')_{\sigma(\beta^{-1}(i))}} dz\, ,
\end{eqnarray*}
where $\widetilde{\C} = \cfrac{(-1)^{pq}\C}{\mathscr{W}(\H)}$.

\begin{lemme}

For every $\sigma \in \mathscr{S}_{r+s}$, the space $\E_{\sigma, \emptyset}$ is given by
\begin{equation*}
\E_{\sigma, \emptyset} = \left\{h' = (e^{-X_{1}}, \ldots, e^{-X_{n}}) \in \H'_{\mathbb{C}}, \begin{cases} X_{\sigma(i)} > 0 & \text{ if } i \in \{1, \ldots, p\} \text{ and } \sigma(i) \in \{1, \ldots, r\} \\ X_{\sigma(i)} < 0 & \text{ if } i \in \{1, \ldots, p\} \text{ and } \sigma(i) \in \{r+1, \ldots, r+s\} \\ X_{\sigma(i)} < 0 & \text{ if } i \in \{p+1, \ldots, p+q\} \text{ and } \sigma(i) \in \{1, \ldots, r\} \\ X_{\sigma(i)} > 0 & \text{ if } i \in \{p+1, \ldots, p+q\} \text{ and } \sigma(i) \in \{r+1, \ldots, r+s\} \end{cases} \right\}
\end{equation*}
\label{LemmaESigma}

\end{lemme}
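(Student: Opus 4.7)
The plan is to unwind the definition of $\E_{\sigma, \emptyset}$ from \eqref{SpaceEGamma} in coordinates. For $\S = \emptyset$ we have $\underline{\S} = \emptyset$, so
\begin{equation*}
\Gamma_{\sigma, \emptyset} = \{Y \in \mathfrak{h}' : \langle Y \cdot, \cdot\rangle \text{ is positive definite on } \sigma\W^{\mathfrak{h}}\},
\end{equation*}
and $\E_{\sigma, \emptyset} = \widetilde{\exp}(i\Gamma_{\sigma, \emptyset})$. Using $\W^{\mathfrak{h}} = \bigoplus_{i=1}^{n} \Hom(\V'_i, \V_i)$ recalled at the start of Section \ref{ProofConjectureDS}, together with the fact that $\sigma \in \mathscr{S}_{r+s}$ permutes the one-dimensional $\H'_{\mathbb{C}}$-weight lines $\V'_j$, I would first obtain the $\mathfrak{h}'$-stable decomposition
\begin{equation*}
\sigma \W^{\mathfrak{h}} = \bigoplus_{i=1}^{n} \Hom(\V'_{\sigma(i)}, \V_i).
\end{equation*}
Each summand is one-dimensional over $\mathbb{C}$, and on it $Y = (iX_1, \ldots, iX_{r+s})$ acts by the scalar $-iX_{\sigma(i)}$, because $\mathfrak{g}'$ acts on $\W = \Hom(\V', \V)$ via $Y \cdot w = -w \circ Y$. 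Consequently these summands are mutually orthogonal for the symmetric form $\langle Y \cdot, \cdot\rangle$, and the positivity condition decouples into $n$ independent scalar sign conditions.

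For the sign on a single summand, I would pick a non-zero element $w = z\, e_i \otimes (e'_{\sigma(i)})^{*}$ of $\Hom(\V'_{\sigma(i)}, \V_i)$ and compute $\langle Y \cdot w, w\rangle$ by plugging into $\langle w_1, w_2\rangle = \tr_{\mathbb{C}/\mathbb{R}}(w_2^{*} w_1)$. With the matrix realizations $\Mat((\cdot, \cdot), \mathscr{B}_{\V}) = \Id_{p,q}$ and $\Mat((\cdot, \cdot)', \mathscr{B}_{\V'}) = i \Id_{r,s}$, and setting $\epsilon_i = +1$ for $i \leq p$, $\epsilon_i = -1$ for $i > p$, and analogously $\epsilon'_j = +1$ for $j \leq r$, $\epsilon'_j = -1$ for $j > r$, a direct computation yields
\begin{equation*}
\langle Y \cdot w, w\rangle = c\, X_{\sigma(i)}\, \epsilon_i\, \epsilon'_{\sigma(i)}\, |z|^{2},
\end{equation*}
with $c$ a positive constant independent of $i$, $\sigma$ and $z$. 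Hence positivity of $\langle Y \cdot, \cdot\rangle$ on $\sigma \W^{\mathfrak{h}}$ is equivalent to the system $X_{\sigma(i)}\, \epsilon_i\, \epsilon'_{\sigma(i)} > 0$ for $i = 1, \ldots, n$.

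To conclude, $\widetilde{\exp}(iY)$ projects in $\H'_{\mathbb{C}}$ to the diagonal element whose $k$-th entry is $e^{-X_k}$, so writing $h' = (e^{-X_1}, \ldots, e^{-X_{r+s}})$ and splitting on the four possible combinations of $(\epsilon_i, \epsilon'_{\sigma(i)}) \in \{\pm 1\}^{2}$ reproduces exactly the four cases listed in the statement. The only step that really requires care is the overall sign in the expression for $\langle Y \cdot w, w\rangle$: an error there would replace the asserted positive cone by its opposite. I would pin this down by checking against a definite test case, namely by taking $\sigma$ equal to the identity permutation and a signature for which $\sigma \W^{\mathfrak{h}}$ carries a genuinely positive definite hermitian form, so that the condition must collapse to $X_j > 0$ for all $j$; once this single sign is fixed the four cases follow mechanically.
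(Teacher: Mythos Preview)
Your proposal is correct and follows essentially the same route as the paper: both arguments unwind the definition of $\Gamma_{\sigma,\emptyset}$ by decomposing $\sigma\W^{\mathfrak{h}}$ into the one-dimensional pieces $\Hom(\V'_{\sigma(i)},\V_i)$ and computing $\langle Y\cdot w,w\rangle$ on each summand, obtaining exactly the four sign conditions $X_{\sigma(i)}\epsilon_i\epsilon'_{\sigma(i)}>0$. The paper carries out the matrix computation explicitly via $\langle w_1,w_2\rangle = -\Im(\tr(\Id_{p,q}\overline{w_2}^{t}\Id_{r,s}w_1))$ and thereby fixes the overall sign directly, whereas you propose to fix it by a test case; either way works, and once that sign is pinned down the rest is identical.
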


\begin{proof}

Let $w = \sum\limits_{i=1}^{n} w_{i} \E_{i, i} \in W^{\mathfrak{h}}$, $\sigma \in \mathscr{S}_{r+s}$ and $y = (iX_{1}, \ldots, iX_{n}) \in \mathfrak{h}'$, with $X_{j} \in \mathbb{R}$.
Then,
\begin{eqnarray*}
\langle y(\sigma(w)), \sigma(w)\rangle & = & \langle y\left(\sum\limits_{i=1}^{n} w_{i} \E_{i, \sigma(i)}\right), \sum\limits_{j=1}^{n} w_{j} \E_{j, \sigma(j)}\rangle = - \langle \sum\limits_{i=1}^{n} w_{i}y_{\sigma(i)}\E_{i, \sigma(i)}, \sum\limits_{j=1}^{n} w_{j} \E_{j, \sigma(j)j}\rangle \\ 
       & = & \sum\limits_{i=1}^{n}\sum\limits_{j=1}^{n} \Im(\tr(\overline{w_{j}}\Id_{p, q}\E_{\sigma(j), j}\Id_{r, s}w_{i}y_{\sigma(i)}\E_{i, \sigma(i)})) \\
       & = & \sum\limits_{i=1}^{n} \Im(\tr(\Id_{p, q}\overline{w_{i}}\E_{i, \sigma(i)}\Id_{r, s}w_{i}y_{\sigma(i)}\E_{\sigma(i), i})) \\
       & = & \sum\limits_{i=1}^{p} \Im(\tr(|w_{i}|^{2}y_{\sigma(i)} \E_{i, \sigma(i)}\Id_{r, s}\E_{\sigma(i), i})) - \sum\limits_{i=p+1}^{n} \Im(\tr(|w_{i}|^{2}y_{\sigma(i)}\E_{i, \sigma(i)}\Id_{r, s}\E_{\sigma(i), i})) \\
       & = & \sum\limits_{\underset{\sigma(i) \in \{1, \ldots, r\}}{i=1}}^{p} |w_{i}|^{2}X_{\sigma(i)} - \sum\limits_{\underset{\sigma(i) \in \{r+1, \ldots, n\}}{i=1}}^{p} |w_{i}|^{2}X_{\sigma(i)} - \sum\limits_{\underset{\sigma(i) \in \{1, \ldots, r\}}{i=p+1}}^{n} |w_{i}|^{2}X_{\sigma(i)} + \sum\limits_{\underset{\sigma(i) \in \{r+1, \ldots, n\}}{i=p+1}}^{n} |w_{i}|^{2}X_{\sigma(i)}
\end{eqnarray*}
In particular, using Equation \eqref{SpaceEGamma}, we get:
\begin{equation*}
\Gamma_{\sigma, \emptyset} = \left\{y = (iX_{1}, \ldots, iX_{n}) \in \mathfrak{h}', \begin{cases} X_{\sigma(i)} > 0 & \text{ if } i \in \{1, \ldots, p\} \text{ and } \sigma(i) \in \{1, \ldots, r\} \\ X_{\sigma(i)} < 0 & \text{ if } i \in \{1, \ldots, p\} \text{ and } \sigma(i) \in \{r+1, \ldots, n\} \\ X_{\sigma(i)} < 0 & \text{ if } i \in \{p+1, \ldots, n\} \text{ and } \sigma(i) \in \{1, \ldots, r\} \\ X_{\sigma(i)} > 0 & \text{ if } i \in \{p+1, \ldots, n\} \text{ and } \sigma(i) \in \{r+1, \ldots, n\} \end{cases}\right\}
\end{equation*}
The result follows using that $\E_{\sigma, \emptyset} = \exp(i\Gamma_{\sigma, \emptyset})$.

\end{proof}

\begin{prop}

Let $\Pi \in \mathscr{R}(\widetilde{\U}(p, q), \omega)$ be a discrete series representation of Harish-Chandra parameter $\lambda_{a, b}$ as in Theorem \ref{TheoremDSHCP} and let $(r, s) = (r_{\Pi}, s_{\Pi})$ the unique integers such that $\theta_{r, s}(\Pi) \neq 0$. The value of $\Theta'_{\Pi}$ on $\widetilde{\H'}^{\reg} = \widetilde{\H'(\emptyset)}^{\reg}$ is given by
\begin{equation*}
\Delta_{\Psi'}(\check{h}') \Theta'_{\Pi}(\check{p}(\check{h}')) = 2(-1)^{n-a-b}\varepsilon(\tau_{a, b})\widetilde{\C}\sum\limits_{\sigma \in \mathscr{S}_{r} \times \mathscr{S}_{s}} \varepsilon(\sigma) (\sigma \check{h}')^{\tau_{a, b} \lambda_{a, b}},
\end{equation*}
where $\tau_{a, b} \in \mathscr{S}_{r+s}$ is defined by:
\begin{itemize}
\item If $r \leq p$, $\tau_{a, b} = (a+1, p+b+1)(a+2, p+b+2) \ldots (r, p+q)$,
\item If $p+1 \leq r \leq p+b$, $\tau_{a, b} \in \Stab_{\mathscr{S}_{r+s}}\left(\{1, \ldots, a\} \cup \{r+1, \ldots, p+b\}\right)$ and satisfies:
\begin{equation*}
\tau_{a, b}(a+1) = p+b+1, \ldots, \tau_{a, b}(r) = r+s, \tau_{a, b}(p+b+1) = a+1, \ldots, \tau_{a, b}(p+q) = r.
\end{equation*}
\item If $r \geq p + b + 1$, $\tau_{a, b} \in \Stab_{\mathscr{S}_{r+s}}\left(\{1, \ldots, a\} \cup \{p+b+1, \ldots, r\}\right)$ and satisfies
\begin{equation*}
\tau_{a, b}(a+1) = r+1, \ldots, \tau_{a, b}(p+b) = r+s, \tau_{a, b}(r+1) = a+1, \ldots, \tau_{a, b}(r+s) = p+b+1.
\end{equation*}
\end{itemize}

\label{ImportantProposition1}

\end{prop}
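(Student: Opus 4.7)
The plan is to substitute the explicit discrete series character formula from Theorem \ref{TheoremDiscreteSeries} (with parameter $\lambda_{a,b}$) into the integral expression for $\Delta_{\Psi'}(\check{h}')\Theta'_\Pi(\check{p}(\check{h}'))$ derived just before the statement, and then evaluate the resulting Cauchy integrals explicitly. After the substitution the integrand on $\check{\H}$ reduces to a product of $n$ independent contour integrals of the form $\int_{\S^1} z^{\lambda_i-1/2}/(z-(rh')_{\sigma(\beta^{-1}(i))})\,dz$.

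The first step is to apply Lemma \ref{LemmaComplexIntegrals} to each contour integral. Since $\lambda_i \in \mathbb{Z}+\tfrac{1}{2}$, the sign of $\lambda_i - \tfrac{1}{2}$ coincides with that of $\lambda_i$, and each integral equals $0$, or $+2i\pi (rh')^{\lambda_i-1/2}_{\sigma(\beta^{-1}(i))}$ when $\lambda_i>0$ and $|(rh')_{\sigma(\beta^{-1}(i))}|<1$, or $-2i\pi (rh')^{\lambda_i-1/2}_{\sigma(\beta^{-1}(i))}$ when $\lambda_i<0$ and $|(rh')_{\sigma(\beta^{-1}(i))}|>1$. Lemma \ref{LemmaESigma} translates the limit $r\to 1$ from $\E_{\sigma,\emptyset}$ into an unambiguous sign condition on the $X_{\sigma(i)}$ and hence on the modulus of $(rh')_{\sigma(\beta^{-1}(i))}$. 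Combining this with the shape of $\lambda_{a,b}$ (positive precisely on $\{1,\dots,a\}\cup\{p+1,\dots,p+b\}$) and the fact that $\beta\in\mathscr{S}_p\times\mathscr{S}_q$ preserves the two blocks $\{1,\dots,p\}$ and $\{p+1,\dots,n\}$, the non-vanishing condition on $(\sigma,\beta)$ becomes purely combinatorial: writing $\tau:=\sigma\beta^{-1}$, the pair contributes iff
\begin{equation*}
\tau\bigl(\{1,\dots,a\}\cup\{p+b+1,\dots,p+q\}\bigr)\subseteq\{1,\dots,r\}
\end{equation*}
and (equivalently) the complementary indices map into $\{r+1,\dots,n\}$. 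The signs produced by Lemma \ref{LemmaComplexIntegrals} combine into the global factor $(-1)^{n-a-b}$, accounting for the $(p-a)+(q-b)$ indices where $\lambda_i<0$.

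The next step is to reparametrise the double sum. Since $\varepsilon(\sigma)\varepsilon(\beta)=\varepsilon(\tau)$ and the surviving summand depends only on $\tau$, the sum over $(\sigma,\beta)$ factors as $|\mathscr{W}(\H)|=p!q!$ times a sum over good $\tau$'s (the factor being absorbed into $\widetilde{\C}$, which contains $1/|\mathscr{W}(\H)|$). Fixing any particular good representative $\tau_{a,b}$, a second permutation $\tau$ is good iff $\tau\tau_{a,b}^{-1}$ preserves both $\{1,\dots,r\}$ and $\{r+1,\dots,n\}$, i.e. iff $\tau\in(\mathscr{S}_r\times\mathscr{S}_s)\tau_{a,b}$. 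Combining the $\prod h_i^{\prime\,1/2}$ prefactor with $\prod(h')^{\lambda_i-1/2}_{\sigma(\beta^{-1}(i))}$ and re-indexing yields $(\check{h}')^{\tau\lambda_{a,b}}$, and a change of variable $\sigma\mapsto\sigma^{-1}$ inside $\mathscr{S}_r\times\mathscr{S}_s$ converts $(\check{h}')^{\sigma\tau_{a,b}\lambda_{a,b}}$ into the notation $(\sigma\check{h}')^{\tau_{a,b}\lambda_{a,b}}$ used in the statement. Thus the whole expression reduces to $2(-1)^{n-a-b}\varepsilon(\tau_{a,b})\widetilde{\C}\sum_{\sigma\in\mathscr{S}_r\times\mathscr{S}_s}\varepsilon(\sigma)(\sigma\check{h}')^{\tau_{a,b}\lambda_{a,b}}$.

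It remains to produce an explicit representative of the coset and check that the three candidates exhibited in the statement do the job. The trichotomy reflects the relative position of $r$ with respect to $p$ and $p+b$: in the first case ($r\leq p$) a product of disjoint transpositions swapping $\{a+1,\dots,r\}$ with $\{p+b+1,\dots,p+q\}$ suffices; in the intermediate case ($p+1\leq r\leq p+b$) the same idea works but one must stabilise the larger block $\{1,\dots,a\}\cup\{r+1,\dots,p+b\}$; and in the last case ($r\geq p+b+1$) the roles of the blocks are interchanged and one must use the identities $r=a+q-b$ and $s=p-a+b$ to match sizes. In each case I would verify by inspection that $\tau_{a,b}$ has the correct image on $\{1,\dots,a\}\cup\{p+b+1,\dots,p+q\}$. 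The most error-prone step is the third case, where the two blocks that have to be interchanged lie in different halves and the careful bookkeeping of indices and of the sign $\varepsilon(\tau_{a,b})$ is the main technical obstacle.
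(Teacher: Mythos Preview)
Your proposal is correct and follows essentially the same route as the paper's proof: you substitute the discrete series character into the integral formula derived just before the proposition, evaluate the $n$ decoupled Cauchy integrals via Lemmas \ref{LemmaComplexIntegrals} and \ref{LemmaESigma}, read off the combinatorial non-vanishing condition on $\tau=\sigma\beta^{-1}$, and then recognise the set of good $\tau$'s as a single $(\mathscr{S}_r\times\mathscr{S}_s)$-coset, producing explicit representatives $\tau_{a,b}$ in the three cases. The paper organises the same computation slightly differently (it writes the non-vanishing condition as a union of products of bijection sets and then exhibits the coset decomposition case by case), but the substance---the sign count $(-1)^{n-a-b}$, the multiplicity $p!q!$ from the $(\sigma,\beta)\mapsto\tau$ fibre, and the identification of the coset representative---is identical.
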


\begin{nota}

\noindent For every subset $\{i_{1}, \ldots, i_{k}\}$ of $\{1, \ldots, p\}$ (resp. $\{p+1, \ldots, p+q\}$, $\{1, \ldots, r\}$ or $\{r+1, \ldots, r+s\}$), we denote by $\{i_{1}, \ldots, i_{k}\}^{c}$ the set $\{1, \ldots, p\} \setminus \{i_{1}, \ldots, i_{k}\}$ (resp. $\{p+1, \ldots, p+q\} \setminus \{i_{1}, \ldots, i_{k}\}$, $\{1, \ldots, r\} \setminus \{i_{1}, \ldots, i_{k}\}$ or $\{r+1, \ldots, r+s\} \setminus \{i_{1}, \ldots, i_{k}\}$).

\noindent For two subsets $\{a_{1}, \ldots, a_{w}\}$ and $\{b_{1}, \ldots, b_{w}\}$ of $\{1, \ldots, p+q\}$, we denote by $\mathscr{S}^{\{b_{1}, \ldots, b_{w}\}}_{\{a_{1}, \ldots, a_{w}\}}$ the groups of bijections between $\{a_{1}, \ldots, a_{w}\}$ and $\{b_{1}, \ldots, b_{w}\}$.

\noindent Similarly, for every $\beta \in \mathscr{S}_{p} \times \mathscr{S}_{q}$, we denote by $\mathscr{S}(\beta)^{\{b_{1}, \ldots, b_{w}\}}_{\{a_{1}, \ldots, a_{w}\}}$ the groups of bijections between $\{\beta(a_{1}), \ldots, \beta(a_{w})\}$ and $\{b_{1}, \ldots, b_{w}\}$. Obviously,
\begin{equation*}
\mathscr{S}_{p} \times \mathscr{S}_{q} = \bigcup\limits_{\{i_{1}, \ldots, i_{a}\} \subseteq \{1, \ldots, p\}} \bigcup\limits_{\{j_{1}, \ldots, j_{b}\} \subseteq \{p+1, \ldots, p+q\}} \mathscr{S}^{\{i_{1}, \ldots, i_{a}\}^{c}}_{\{1, \ldots, p-a\}} \times \mathscr{S}^{\{i_{1}, \ldots, i_{a}\}}_{\{p-a+1, \ldots, p\}} \times \mathscr{S}^{\{j_{1}, \ldots, j_{b}\}^{c}}_{\{p+1, \ldots, p+q-b\}} \times \mathscr{S}^{\{j_{1}, \ldots, j_{b}\}}_{\{p+q-b+1, \ldots, p+q\}}
\end{equation*}
for every $1 \leq t \leq p$.

\end{nota}

\begin{proof}

To simplify the notations, we will denote by $\R(\sigma, \lambda_{a, b}, \beta), \sigma \in \mathscr{S}_{p+q}, \beta \in \mathscr{S}_{p} \times \mathscr{S}_{q}$, the following term:
\begin{equation*}
\R(\sigma, \lambda_{a, b}, \beta) = \lim\limits_{\underset{r \in \E_{\sigma, \emptyset}}{r \to 1}} \prod\limits_{i=1}^{n} \displaystyle\int_{\S^{1}}\cfrac{z^{\lambda_{i} - \frac{1}{2}}}{z - (rh')_{\sigma(\beta^{-1}(i))}}dz
\end{equation*}

\noindent According to Lemmas \ref{LemmaComplexIntegrals} and \ref{LemmaESigma}, we get that $\R(\sigma, \lambda_{a, b}, \beta) \neq 0$ if and only if
\begin{equation*}
\sigma \circ \beta^{-1} \in \bigcup\limits_{\{i_{1}, \ldots, i_{q-b}\} \subseteq \{1, \ldots, r\}} \bigcup\limits_{\{j_{1}, \ldots, j_{p-a}\} \subseteq \{r+1, \ldots, r+s\}} \mathscr{S}^{\{i_{1}, \ldots, i_{q-b}\}^{c}}_{\{1, \ldots, a\}} \times \mathscr{S}^{\{j_{1}, \ldots, j_{p-a}\}}_{\{a+1, \ldots, p\}} \times \mathscr{S}^{\{j_{1}, \ldots, j_{p-a}\}^{c}}_{\{p+1, \ldots, p+b\}} \times \mathscr{S}^{\{i_{1}, \ldots, i_{q-b}\}}_{\{p+b+1, \ldots, p+q\}}.
\end{equation*}
We first assume that $r \leq p$. In this case, using that $\left\{a+1, \ldots, p\right\} = \left\{a+1, \ldots, r\right\} \cup \left\{r+1, \ldots, p\right\}$, we get
\begin{equation*}
\mathscr{S}_{r} \times \mathscr{S}_{s} = \left(\bigcup\limits_{\{i_{1}, \ldots, i_{q-b}\} \subseteq \{1, \ldots, r\}} \bigcup\limits_{\{j_{1}, \ldots, j_{p-a}\} \subseteq \{r+1, \ldots, r+s\}} \mathscr{S}^{\{i_{1}, \ldots, i_{q-b}\}^{c}}_{\{1, \ldots, a\}} \times \mathscr{S}^{\{j_{1}, \ldots, j_{p-a}\}}_{\{a+1, \ldots, p\}} \times \mathscr{S}^{\{j_{1}, \ldots, j_{p-a}\}^{c}}_{\{p+1, \ldots, p+b\}} \times \mathscr{S}^{\{i_{1}, \ldots, i_{q-b}\}}_{\{p+b+1, \ldots, p+q\}}\right) \circ \sigma_{1},
\end{equation*}
where $\sigma_{1} = (a+1, p+b+1)(a+2, p+b+2) \ldots (r, p+q)$. For every $\beta \in \mathscr{S}_{p} \times \mathscr{S}_{q}$, there exists exactly $r!s!$ elements in $\sigma \in \mathscr{S}_{r+s}$ such that $\sigma \circ \beta^{-1} \in \mathscr{S}_{r} \times \mathscr{S}_{s} \circ \sigma_{1}$.Then, 
\begin{eqnarray*}
& & \sum\limits_{\sigma \in \mathscr{S}_{r+s}} \sum\limits_{\beta \in \mathscr{S}_{p} \times \mathscr{S}_{q}}  \varepsilon(\sigma\beta) \lim\limits_{\underset{r \in \E_{\sigma, \emptyset}}{r \to 1}} \prod\limits_{i=1}^{p+q} \displaystyle\int_{\S^{1}}\cfrac{z^{\lambda_{i} - \frac{1}{2}}}{z - (rh')_{\sigma(\beta^{-1}(i))}} dz = (-1)^{p+q-a-b}(2i\pi)^{p+q}p!q! \sum\limits_{\tau \in \mathscr{S}_{r} \times \mathscr{S}_{s} \circ \sigma_{1}} \varepsilon(\tau) \prod\limits_{i=1}^{p+q} h'^{\lambda_{i} - \frac{1}{2}}_{\tau(i)} \\
& = & (-1)^{p+q-a-b}(2i\pi)^{p+q}p!q! \sum\limits_{\tau \in \mathscr{S}_{r} \times \mathscr{S}_{s}} \varepsilon(\tau \sigma^{-1}_{1}) \prod\limits_{i=1}^{p+q} h'^{\lambda_{i} - \frac{1}{2}}_{\tau(\sigma^{-1}_{1}(i))} = (-1)^{p+q-a-b}(2i\pi)^{p+q}p!q! \sum\limits_{\tau \in \mathscr{S}_{r} \times \mathscr{S}_{s}} \varepsilon(\tau \sigma^{-1}_{1})(\sigma_{1}\tau^{-1}\check{h}')^{\lambda_{a, b} - \xi} \\ 
& = & (-1)^{p+q-a-b}(2i\pi)^{p+q}p!q! \varepsilon(\sigma_{1})\sum\limits_{\tau \in \mathscr{S}_{r} \times \mathscr{S}_{s}} \varepsilon(\tau)(\tau\check{h}')^{\sigma_{1}(\lambda_{a, b}-\xi)},
\end{eqnarray*}
where $\xi = \sum\limits_{i=1}^{n} \frac{1}{2} e_{i}$, i.e.
\begin{equation*}
\Delta_{\Psi'}(\check{h}')\Theta'_{\Pi}(\check{p}(\check{h}')) = 2(-1)^{p+q-a-b}\varepsilon(\sigma_{1})\widetilde{\C} \sum\limits_{\tau \in \mathscr{S}_{r} \times \mathscr{S}_{s}} \varepsilon(\tau)(\tau\check{h}')^{\sigma_{1}(\lambda_{a, b})}.
\end{equation*}
Now assume that $r > p$. We distinguish two cases. If $p+1 \leq r \leq p+b$, then,
\begin{equation*}
\mathscr{S}_{r} \times \mathscr{S}_{s} = \left(\bigcup\limits_{\{i_{1}, \ldots, i_{q-b}\} \subseteq \{1, \ldots, r\}} \bigcup\limits_{\{j_{1}, \ldots, j_{p-a}\} \subseteq \{r+1, \ldots, r+s\}} \mathscr{S}^{\{i_{1}, \ldots, i_{q-b}\}^{c}}_{\{1, \ldots, a\}} \times \mathscr{S}^{\{j_{1}, \ldots, j_{p-a}\}}_{\{a+1, \ldots, p\}} \times \mathscr{S}^{\{j_{1}, \ldots, j_{p-a}\}^{c}}_{\{p+1, \ldots, p+b\}} \times \mathscr{S}^{\{i_{1}, \ldots, i_{q-b}\}}_{\{p+b+1, \ldots, p+q\}} \right) \circ \eta,
\end{equation*}
for every $\eta \in \mathscr{S}_{r+s}$ satisfying $\eta\{1, \ldots, a\} = \{1, \ldots, a\}, \thinspace \eta\{r+1, \ldots, p+b\} = \{r+1, \ldots, p+b\}, \thinspace \eta\{a+1, \ldots, r\} \subseteq \{p+b+1, \ldots, r+s\}$ and $\eta\{p+b+1, \ldots, p+q\} \subseteq \{a+1, \ldots, r\}$. Let $\sigma_{2}$ be the element of $\Stab_{\mathscr{S}_{r+s}}\left(\{1, \ldots, a\} \cup \{r+1, \ldots, p+b\}\right)$ given by
\begin{equation*}
\sigma_{2}(a+1) = p+b+1, \ldots, \sigma_{2}(r) = r+s, \sigma_{2}(p+b+1) = a+1, \ldots, \sigma_{2}(p+q) = r.
\end{equation*}
This element satisfy the previous conditions and we get:
\begin{equation*}
\Delta_{\Psi'}(\check{h}')\Theta'_{\Pi}(\check{p}(\check{h}')) = 2(-1)^{p+q-a-b}\varepsilon(\sigma_{2}) \widetilde{\C} \sum\limits_{\tau \in \mathscr{S}_{r} \times \mathscr{S}_{s}} \varepsilon(\tau)(\tau\check{h}')^{\sigma_{2}(\lambda_{a, b})}.
\end{equation*}
Similarly, $r \geq p+b+1$,
\begin{equation*}
\mathscr{S}_{r} \times \mathscr{S}_{s} = \left(\bigcup\limits_{\{i_{1}, \ldots, i_{q-b}\} \subseteq \{1, \ldots, r\}} \bigcup\limits_{\{j_{1}, \ldots, j_{p-a}\} \subseteq \{r+1, \ldots, r+s\}} \mathscr{S}^{\{i_{1}, \ldots, i_{q-b}\}^{c}}_{\{1, \ldots, a\}} \times \mathscr{S}^{\{j_{1}, \ldots, j_{p-a}\}}_{\{a+1, \ldots, p\}} \times \mathscr{S}^{\{j_{1}, \ldots, j_{p-a}\}^{c}}_{\{p+1, \ldots, p+b\}} \times \mathscr{S}^{\{i_{1}, \ldots, i_{q-b}\}}_{\{p+b+1, \ldots, p+q\}}\right) \circ \eta,
\end{equation*}
for every $\eta \in \mathscr{S}_{r+s}$ satisfying $\eta\{1, \ldots, a\} = \{1, \ldots, a\}, \thinspace \eta\{p+b+1, \ldots, r\} = \{p+b+1, \ldots, r\}, \thinspace \eta\{a+1, \ldots, p+b\} \subseteq \{r+1, \ldots, r+s\}$ and $\eta\{r+1, \ldots, r+s\} \subseteq \{a+1, \ldots, p+b+1\}$. Let $\sigma_{3}$ be the element of $\Stab_{\mathscr{S}_{r+s}}\left(\{1, \ldots, a\} \cup \{p+b+1, \ldots, r\}\right)$ given by
\begin{equation*}
\sigma_{3}(a+1) = r+1, \ldots, \sigma_{3}(p+b) = r+s, \sigma_{3}(r+1) = a+1, \ldots, \sigma_{3}(r+s) = p+b+1.
\end{equation*}
This element satisfies the previous conditions and we get:
\begin{equation*}
\Delta_{\Psi'}(\check{h}')\Theta'_{\Pi}(\check{p}(\check{h}')) = 2(-1)^{p+q-a-b}\varepsilon(\sigma_{3}) \widetilde{\C} \sum\limits_{\tau \in \mathscr{S}_{r} \times \mathscr{S}_{s}} \varepsilon(\tau)(\tau\check{h}')^{\sigma_{3}(\lambda_{a, b})}.
\end{equation*}

\end{proof}

\begin{prop}

For every $\Pi \in \mathscr{R}(\widetilde{\U}(p, q), \omega)$, we get
\begin{equation*}
\sup\limits_{\tilde{g}' \in \widetilde{\G'}^{\reg}} |\D(\tilde{g}')|^{\frac{1}{2}} |\Theta'_{\Pi}(\tilde{g}')| < \infty.
\end{equation*}

\label{ImportantProposition2}

\end{prop}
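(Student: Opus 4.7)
The plan is to exploit the $\widetilde{\G'}$-invariance of $|\D(\cdot)|^{1/2}|\Theta'_\Pi(\cdot)|$ and verify the bound separately on each Cartan subgroup $\widetilde{\H'}(\S_j)^{\reg}$, $j = 0, \ldots, \min(r,s)$. The case $j=0$ (compact Cartan) is handled directly from Proposition \ref{ImportantProposition1}: the formula
\[
\Delta_{\Psi'}(\check{h}') \Theta'_\Pi(\check{p}(\check{h}')) = 2(-1)^{n-a-b}\varepsilon(\tau_{a,b})\widetilde{\C}\sum_{\sigma \in \mathscr{S}_r \times \mathscr{S}_s} \varepsilon(\sigma) (\sigma\check{h}')^{\tau_{a,b}\lambda_{a,b}}
\]
expresses the left-hand side as a finite sum of characters of the compact double cover $\check{\H}'$. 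Since $\lambda_{a,b} \in i\mathfrak{h}'^*$ and $\check{h}'$ has coordinates of modulus one, each such character has modulus one, and the right-hand side is bounded by $r!\,s!\,|\widetilde{\C}|$. Combined with Remark \ref{DeltaSquare}, which identifies $|\Delta_{\Psi'}(\check{h}')|$ with $|\D(\check{p}(\check{h}'))|^{1/2}$ (up to a non-vanishing constant), this yields the desired uniform bound on the compact Cartan.

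For a non-compact Cartan $\widetilde{\H'}(\S_j)^{\reg}$ with $j \geq 1$, I would proceed by induction on $\min(p,q)$ (i.e.\ on the number of non-compact Cartan conjugacy classes on the $\G$-side). The base case $\min(p,q) = 0$ reduces $\G$ to a compact group, where the conjecture is already known. For the inductive step, one writes $\tilde{h}' = \tilde{t}'_1 \tilde{a}' \tilde{t}'_2$ via \eqref{DecompositionHSi2}, with $\tilde{t}'_2$ lying in the compact Cartan of the smaller group $\G(\U_j) = \U(r-j, s-j)$. Plugging the reduction formula \eqref{ChcNonCompact} into the identity of Remark \ref{ValueChcThetaPi}, restricted to test functions supported near $\widetilde{\G'}\cdot\widetilde{\H'}(\S_j)^{\reg}$, one recognizes the resulting expression as the $\Chc^*$-transfer for the smaller dual pair $(\G(\V_{0,j}), \G(\U_j)) = (\U(p-j,q-j), \U(r-j,s-j))$ applied to the parabolic descent of $\Theta_\Pi$ along $\P(\S_j)$, evaluated at $\tilde{t}'_2$, and then multiplied by a finite sum over Weyl group elements of characters depending on $\tilde{t}'_1\tilde{a}'$ on $\GL(\X_j)$. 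The inductive hypothesis furnishes the bound on the compact-Cartan part carrying $\tilde{t}'_2$, while the $\GL(\X_j)$-contribution is a uniformly bounded finite character sum on its compact real form.

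The main obstacle will be the accurate bookkeeping of all Weyl denominators and Jacobian-type factors in the reduction. Specifically, one needs to show that the factor $|\det(\Ad(\tilde{h}')-\Id)_{\eta'(\S_j)}|$ appearing on the left of \eqref{ChcNonCompact}, together with $\d_{\S_j}$, $\d'_{\S_j}$ and the Harish-Chandra transform $\varphi^{\widetilde{\K'}}_{\widetilde{\N'}(\S_j)}$ (controlled via Lemma \ref{IntegrationLemmaProposition67}), combine so that $|\D_{\G'}(\tilde{h}')|^{1/2}$ factorizes as the product of: a Weyl denominator for $\GL(\X_j)$ at $\tilde{t}'_1\tilde{a}'$, a mixing term from the parabolic radical $\eta'(\S_j)$, and $|\D_{\G(\U_j)}(\tilde{t}'_2)|^{1/2}$. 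Once this factorization is in place, the inductive bound for the smaller group and the trivial bound on characters of $\GL(\X_j)$ restricted to $\widetilde{\T'_1(\S_j)}$ combine to give a bound uniform in $(\tilde{t}'_1, \tilde{a}', \tilde{t}'_2) \in \widetilde{\H'}(\S_j)^{\reg}$. Summing over the finitely many Cartans of $\widetilde{\G'}$ completes the proof.
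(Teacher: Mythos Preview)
Your approach diverges from the paper's in a significant way, and as written it has two genuine gaps.

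First, the inductive scheme does not close. You want to view the contribution on $\widetilde{\H'}(\S_j)^{\reg}$ as the $\Chc^{*}$-transfer for the smaller pair $(\G(\V_{0,j}),\G(\U_j))$ applied to ``the parabolic descent of $\Theta_{\Pi}$ along $\P(\S_j)$''. But your inductive hypothesis is Proposition~\ref{ImportantProposition2} for the smaller pair, and that statement is only available for $\Pi$ a discrete series representation of $\widetilde{\G(\V_{0,j})}$. The function on $\T_2(\S_j)$ obtained by restricting $\Theta_{\Pi}$ to $\H(\S_j)$ and freezing the $\T_1(\S_j)\times\A(\S_j)$ variables is \emph{not} the character of a discrete series of $\widetilde{\G(\V_{0,j})}$, so the induction does not apply. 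To make an inductive argument work you would need a statement for a larger class of eigendistributions (those whose numerator $\Delta\cdot\Theta$ is a finite sum of exponentials with the appropriate decay on split parts), which you have not formulated.

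Second, your assertion that ``the $\GL(\X_j)$-contribution is a uniformly bounded finite character sum on its compact real form'' overlooks the split part $\A'(\S_j)$ of the Cartan $\T'_1(\S_j)\times\A'(\S_j)$. Exponentials there are not automatically bounded; the boundedness is exactly the nontrivial content one must extract from the discrete series property of $\Pi$.

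The paper does \emph{not} argue by induction. For each Cartan $\H'(\S_i)$ of $\G'$ it compares the Weyl-integration expression of $\Theta'_{\Pi}(\varphi)$ with the expression coming from Remark~\ref{ValueChcThetaPi} combined with \eqref{ChcNonCompact} and Theorem~\ref{TheoremTheta} for the smaller pair (always at the \emph{compact} Cartan of $\G(\V_{0,j})$, so no further reduction is needed). This yields an explicit formula for $\Delta_{\Psi'}(\check{h}')\Theta'_{\Pi}$ on $\check{\H}'_{\S_i}$ as a sum over $j\le\min(i,p)$ of integrals over the compact torus $\check{\T}_{2,\S_j}$. The key step is then to use the explicit exponential-sum form of the discrete series character on \emph{every} Cartan of $\G$, namely $\Theta_{\Pi}(c(\S_j)\check{p}(\check{t}_j\check{a}_j\check{h})c(\S_j)^{-1})\Delta_{\Psi'}(\check{t}_j\check{a}_j\check{h})=\sum_{w}c(w)(\check{t}_j\check{a}_j\check{h})^{w\lambda}$ (\cite[Theorems~10.35, 10.48]{KNA2}), and to evaluate the resulting integrals over $\check{\T}_{2,\S_j}$ via the elementary Cauchy formula of Lemma~\ref{LemmaComplexIntegrals}. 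The output is a finite sum of exponentials in the $\check{\H}'_{\S_i}$ variables whose boundedness (including on the split directions) follows from the decay of discrete series characters (Remark~\ref{RemarkDiscreteSeries} and Theorem~\ref{TheoremDiscreteSeries2}). This direct computation is what replaces your inductive step and simultaneously handles the split-part issue you glossed over.
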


\noindent We first need to introduce some notations.

\begin{nota}

Let $k \in [|1, \min(r, s)|]$, we denote by $\eta'_{\S_{k}} = \Ad(c(\S_{k})^{-1})(\eta'(\S_{k})) \subseteq \eta'^{+}_{\mathbb{C}}$, where $\eta'^{+}_{\mathbb{C}} = \bigoplus\limits_{\alpha \in \Psi'^{+}} {\mathfrak{g}'_{\mathbb{C}}}_{\alpha}$, where ${\mathfrak{g}'_{\mathbb{C}}}_{\alpha}$ is the eigenspace corresponding to $\alpha \in \Psi'$.

\noindent By keeping the notations of Section \ref{SectionCauchyExplicit}, we get that $\Psi'$ can be decomposed as follow:
\begin{equation}
\Psi' = \Psi'(\mathfrak{gl}(\X_{k})) \cup \Psi'(\mathfrak{g}(\U_{k})) \cup \Psi'(\eta'(\S_{k})), \qquad (k \in [|1, \min(p, \min(r, s))|]).
\label{DecompositionPsik}
\end{equation}

\noindent Finally, we denote by $\mathscr{W}(\mathfrak{g}(\U_{k}))$ the Weyl group corresponding to $(\mathfrak{g}(\U_{k})_{\mathbb{C}}, \mathfrak{t}_{2}(\S_{k})_{\mathbb{C}})$.

\end{nota}

\begin{lemme}

For every $\tilde{h}' \in \widetilde{\H'}(\S_{k})^{\reg}$, $\det(\Id - \Ad(\tilde{h}'))_{\eta'(\S_{k})} \in \mathbb{R}^{*}_{+}$.

\end{lemme}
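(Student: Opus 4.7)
The plan is to reduce the computation to the complex root space decomposition of $\eta'(\S_k)_{\mathbb{C}}$ with respect to the Cartan $\mathfrak{h}'(\S_k)_{\mathbb{C}}$ and to pair up roots under complex conjugation induced by the real form $\mathfrak{h}'(\S_k) \subseteq \mathfrak{h}'(\S_k)_{\mathbb{C}}$.

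First I would identify, via the Cayley transform relation $\eta'_{\S_k} = \Ad(c(\S_k))^{-1}(\eta'(\S_k)) \subseteq \eta'^{+}_{\mathbb{C}}$, the set $\Delta \subseteq \Psi'^{+}$ of positive roots of $(\mathfrak{g}'_{\mathbb{C}},\mathfrak{h}'_{\mathbb{C}})$ such that
\[
\eta'_{\S_k,\mathbb{C}} = \bigoplus_{\alpha \in \Delta}\mathfrak{g}'_{\mathbb{C},\alpha}.
\]
Conjugating by $c(\S_k)$ transports this into a root space decomposition of $\eta'(\S_k)_{\mathbb{C}}$ for the Cartan $\mathfrak{h}'(\S_k)_{\mathbb{C}}$, and since $\eta'(\S_k)$ is a real subspace of $\mathfrak{u}(r,s)$, the set $\Delta$ is stable under the complex conjugation $\sigma$ attached to the real form.

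Second, I would compute
\[
\det\bigl(\Id - \Ad(\tilde h')\bigr)\big|_{\eta'(\S_k)} = \det\bigl(\Id - \Ad(\tilde h')\bigr)\big|_{\eta'(\S_k)_{\mathbb{C}}} = \prod_{\alpha \in \Delta}\bigl(1 - (\tilde h')^{\alpha}\bigr),
\]
using that the real determinant on a real subspace equals the complex determinant on its complexification. The stability of $\Delta$ under $\sigma$ together with the identity $(\tilde h')^{\bar\alpha} = \overline{(\tilde h')^{\alpha}}$ already show that this product is a real number.

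Third, I would partition $\Delta$ into $\sigma$-orbits. For each $\sigma$-orbit $\{\alpha,\bar\alpha\}$ of cardinality two, the corresponding contribution to the product is
\[
\bigl(1 - (\tilde h')^{\alpha}\bigr)\bigl(1 - (\tilde h')^{\bar\alpha}\bigr) = \bigl|1 - (\tilde h')^{\alpha}\bigr|^{2},
\]
which is strictly positive since $\tilde h'$ is regular. For the $\sigma$-fixed (real) roots, which after Cayley transform are exactly the $k$ roots $\alpha_t = e_t - e_{r+t}$ coming from $\S_k$, the values $(\tilde h')^{\alpha_t}$ lie in $\mathbb{R}^{*}_{+}$ (as can be read off from the explicit parametrization in \eqref{DiagonalHSi}), and a direct analysis of the real structure of the Heisenberg-type center of $\eta'(\S_k)$ together with the Cayley form of $\tilde h'$ gives the positivity of $\prod_{t=1}^{k}(1 - (\tilde h')^{\alpha_t})$. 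Assembling the contributions of all $\sigma$-orbits produces a strictly positive real number, establishing the claim.

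The main obstacle is the last step: handling the self-conjugate (real) roots. The complex-pair contributions are automatically positive by the conjugation argument, but the real root contributions require the Cayley-transform structure specific to $(\U(r,s),\H'(\S_k))$ to guarantee a positive sign, and this is where one must use in an essential way the fact that $\eta'(\S_k)$ is built from $\Hom(\U_k,\X_k)$ together with the anti-Hermitian piece in $\Hom(\X_k,\Y_k)\cap \mathfrak{u}(r,s)$.
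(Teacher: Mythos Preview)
Your overall strategy---transport via the Cayley transform, decompose into root spaces for the compact Cartan, write the determinant as a product $\prod_{\alpha\in\Delta}(1-(\tilde h')^{\alpha})$, and pair roots under the complex conjugation attached to the real form $\mathfrak{h}'_{\S_k}$---is exactly what the paper does. The paper simply carries out the pairing explicitly: for $\alpha_1=e_i-e_j$ with $1\le i\le k$ and $k+1\le j\le r+s-k$ it exhibits the partner $\alpha_2=e_j-e_{r+s-k+i}$ and checks directly that $(1-h_1^{\alpha_1})(1-h_1^{\alpha_2})=|1-h_1^{\alpha_1}|^{2}$ using the parametrisation of $\H'_{\S_k}$.

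Where you diverge is in your third step. You assert that the $\sigma$-fixed roots in $\Delta$ are precisely the $k$ roots of $\S_k$ and then try to argue separately that their contribution is positive. In the paper's computation, however, the root set $\Psi'(\eta'(\S_k))$ contains \emph{no} $\sigma$-fixed roots: every root is matched with a distinct conjugate partner, and the roots of $\S_k$ themselves do not appear in the list. So your identification of which roots occur in $\Delta$ is off, and the whole ``real root'' discussion is superfluous from the paper's point of view.

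More seriously, your proposed handling of those real roots is not a proof but a placeholder. For a real root $\alpha_t$ the value $(\tilde h')^{\alpha_t}$ is a positive real number that can lie on either side of $1$, so each factor $1-(\tilde h')^{\alpha_t}$ can have either sign; the phrase ``a direct analysis of the real structure of the Heisenberg-type center \ldots\ gives the positivity'' does not establish anything. If real roots were genuinely present in $\Delta$, this would be the crux of the lemma, not an afterthought. The paper avoids the issue entirely because, with its description of $\Psi'(\eta'(\S_k))$, there is nothing left over after the conjugate-pair step.
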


\begin{proof}

To make things easier, we will consider $\S_{k} = \{e_{1} - e_{r+s-k+1}, \ldots, e_{k} - e_{r+s}\}$. As in Equation \eqref{DiagonalHSi}, we have:
\begin{equation*}
\H'_{\S_{k}} = c(\S_{k})^{-1}\H'(\S_{k})c(\S_{k}) = \left\{h' = (e^{i\theta_{1}-X_{1}}, \ldots, e^{i\theta_{k}-X_{k}}, t_{1}, \ldots, t_{r+s-2k}, e^{i\theta_{1}+X_{1}}, \ldots, e^{i\theta_{k}+X_{k}}), t_{i} \in \U(1), \theta_{i}, X_{i} \in \mathbb{R}\right\}.
\end{equation*}
We denote by $h'_{1} = c(\S_{k})^{-1}h'c(\S_{k})$. Obviously, we get:
\begin{equation*}
\det(\Id - \Ad(\tilde{h}'))_{\eta'(\S_{k})} = \det(\Id - \Ad(h'))_{\eta'(\S_{k})} = \det(\Id - \Ad(h'_{1}))_{\eta'_{\S_{k}}} = \det(\Id - \Ad(\check{h}'_{1}))_{\eta'_{\S_{k}}}.
\end{equation*}
We know that
\begin{equation*}
\det(\Id - \Ad(h_{1}))_{\eta'_{\S_{k}}} = \prod\limits_{\alpha \in \Psi'(\eta'(\S_{k}))}(1 - h^{-\alpha}_{1}),
\end{equation*}
and that 
\begin{equation*}
\Psi'(\eta'(\S_{k})) = \{e_{i} - e_{j}, 1 \leq i \leq k, k+1 \leq j \leq r+s-k\} \cup \{e_{i} - e_{j}, k+1 \leq i \leq r+s-k, r+s-k+1 \leq j \leq r+s\}.
\end{equation*}

\noindent Let $\alpha_{1} = e_{i} - e_{j}$, with $1 \leq i \leq k, k+1 \leq j \leq r+s-k$ and let $\alpha_{2} = e_{j} - e_{r+s-i+1}$. Then,
\begin{equation*}
(1 - h^{\alpha_{1}}_{1})(1 - h^{\alpha_{2}}_{1}) = (1 - e^{i\theta_{i} -X_{i}}t^{-1}_{j-k})(1 - t_{j-k}e^{-i\theta_{i} -X_{i}}) = |1 - e^{i\theta_{i} -X_{i}}t^{-1}_{j-k}|^{2},
\end{equation*}
and the result follows.

\end{proof}

\begin{proof}[Proof of Proposition \ref{ImportantProposition2}]

Without loss of generality, we can assume that $r \leq s$. We distinguish two cases. We first start with $p \leq r$. Note that in this case, $\H_{\S_{i}} = \H'_{\S_{i}}$ (resp. $\H(\S_{i}) = \H'(\S_{i})$) for every $0 \leq i \leq p$, with $\S_{i} = \{e_{1} - e_{r+1}, \ldots, e_{i} - e_{r+i}\}$ as in Notation \ref{NotationsCartanSI}.

\noindent In this case, we get, using \cite[Corollary~A.4]{TOM2}, that for every $\varphi \in \mathscr{C}^{\infty}_{c}(\widetilde{\G'})$: {\small
\begin{eqnarray}
\Theta'_{\Pi}(\varphi) & = & \displaystyle\int_{\widetilde{\G'}} \Theta'_{\Pi}(\tilde{g}') \varphi(\tilde{g}') d\tilde{g}' = \sum\limits_{i=0}^{r} m_{i} \displaystyle\int_{\check{\H}'_{\S_{i}}} \varepsilon_{\S_{i}, \mathbb{R}}(\check{h}') \Delta_{\Phi'}(\check{h}') \mathscr{H}_{\S_{i}}(\Theta'_{\Pi}\varphi)(\check{h}') d\check{h}' \nonumber \\
                              & = & \sum\limits_{i=0}^{r} m_{i} \displaystyle\int_{\check{\H}'_{\S_{i}}} \varepsilon_{\S_{i}, \mathbb{R}}(\check{h}') \Delta_{\Phi'}(\check{h}') \left( \varepsilon_{\S_{i}, \mathbb{R}}(\check{h}') \Delta_{\Psi'}(\check{h}') \displaystyle\int_{\G' / \H'(\S_{i})} \Theta'_{\Pi}(c(\S_{i})\check{p}(\check{h}')c(\S_{i})^{-1})\varphi(g'c(\S_{i})\check{p}(\check{h}')c(\S_{i})^{-1}g'^{-1}) \overline{dg'}\right)d\check{h}' \nonumber \\
                              & = & \sum\limits_{i=0}^{r} m_{i} \displaystyle\int_{\check{\H}_{\S_{i}}} \Theta'_{\Pi}(c(\S_{i})\check{p}(\check{h}')c(\S_{i})^{-1}) |\Delta_{\G'}(\check{h}')|^{2} \displaystyle\int_{\G' / \H'(\S_{i})} \varphi(g'c(\S_{i})\check{p}(\check{h}')c(\S_{i})^{-1}g'^{-1}) \overline{dg'}d\check{h}' \nonumber \\
                              & = & m_{0} \displaystyle\int_{\check{\H}'_{\S_{0}}} \Theta'_{\Pi}(\check{p}(\check{h}')) |\Delta_{\G'}(\check{h}')|^{2} \displaystyle\int_{\G' / \H'(\S_{0})} \varphi(g'\check{p}(\check{h}')g'^{-1}) \overline{dg'}d\check{h}' \nonumber \\
                              & + & \sum\limits_{i=1}^{p} m_{i} \displaystyle\int_{\check{\H}'_{\S_{i}}} \Theta'_{\Pi}(c(\S_{i})\check{p}(\check{h}')c(\S_{i})^{-1}) |\Delta_{\G'}(\check{h}')|^{2} \Lambda(c(\S_{i})\check{p}(\check{h}')c(\S_{i})^{-1})\displaystyle\int_{\L'(\S_{i}) / \H'(\S_{i})} \varphi^{\widetilde{\K'}}_{\widetilde{\N'}(\S_{i})}(l'c(\S_{i})\check{p}(\check{h}')c(\S_{i})^{-1}l'^{-1}) \overline{dl'}d\check{h}' \label{EquationH(Si)1} \\ 
                              & + & \sum\limits_{i=p+1}^{r} m_{i} \displaystyle\int_{\check{\H}'_{\S_{i}}} \Theta'_{\Pi}(c(\S_{i})\check{p}(\check{h}')c(\S_{i})^{-1}) |\Delta_{\G'}(\check{h}')|^{2} \displaystyle\int_{\L'(\S_{p}) / \H'(\S_{i})} \varphi^{\widetilde{\K'}}_{\widetilde{\N'}(\S_{p})}(g'c(\S_{i})\check{p}(\check{h}')c(\S_{i})^{-1}g'^{-1}) \overline{dg'}d\check{h}' \nonumber
\end{eqnarray}}
where $|\Delta_{\G'}(\check{h}')|^{2} = \Delta_{\Phi'}(\check{h}')\Delta_{\Psi'}(\check{h}')$ as in Remark \ref{DeltaSquare}, $\L'(\S_{i})$ is defined in Section \ref{SectionCauchyExplicit} and $\Lambda(c(\S_{i})\check{p}(\check{h}')c(\S_{i})^{-1})$ is given by:
\begin{equation*}
\Lambda(c(\S_{i})\check{p}(\check{h}')c(\S_{i})^{-1}) = \cfrac{\D_{\L'(\S_{i})}(c(\S_{i})\check{p}(\check{h}')c(\S_{i})^{-1})}{\D_{\L'(\S_{0})}(c(\S_{i})\check{p}(\check{h}')c(\S_{i})^{-1})} = \cfrac{|\det(\Id - \Ad(c(\S_{i})\check{p}(\check{h}')c(\S_{i})^{-1})^{-1})_{\mathfrak{l}'(\S_{i})/\mathfrak{h}'(\S_{i})}|^{\frac{1}{2}}}{|\det(\Id - \Ad(c(\S_{i})\check{p}(\check{h}')c(\S_{i})^{-1})^{-1})_{\mathfrak{g}'/\mathfrak{h}'(\S_{i})}|^{\frac{1}{2}}}.
\end{equation*}
Using that $\Theta'_{\Pi} = \Chc^{*}(\Theta_{\Pi})$, we get:
\begin{equation*}
\Theta'_{\Pi}(\varphi) = \sum\limits_{j=0}^{p} \displaystyle\int_{\widetilde{\H}(\S_{j})} \Theta_{\Pi}(\tilde{h}) |\det(\Id - \Ad(\tilde{h})^{-1})_{\mathfrak{g}/\mathfrak{h}(\S_{j})}|^{2} \Chc_{\tilde{h}}(\varphi)d\tilde{h}.
\end{equation*}
Using Equation \eqref{ChcNonCompact}, we get that:
\begin{eqnarray}
\Theta'_{\Pi}(\varphi) & = & \sum\limits_{j=0}^{p} \displaystyle\int_{\widetilde{\T_{1}}(\S_{j})} \displaystyle\int_{\widetilde{\A}(\S_{j})} \displaystyle\int_{\widetilde{\T_{2}}(\S_{j})} \Theta_{\Pi}(\tilde{t}_{1}\tilde{a}\tilde{t}_{2}) |\det(\Id - \Ad(\tilde{t}_{1}\tilde{a}\tilde{t}_{2})^{-1})_{\mathfrak{g}/\mathfrak{h}(\S_{j})}|^{2} \Chc_{\tilde{t}_{1}\tilde{a}\tilde{t}_{2}}(\varphi)d\tilde{t}_{2}d\tilde{a}d\tilde{t}_{1} \nonumber \\
     & = & \sum\limits_{j=0}^{p} \C_{j} \displaystyle\int_{\widetilde{\T_{1}}(\S_{j})} \displaystyle\int_{\widetilde{\A}(\S_{j})} \displaystyle\int_{\widetilde{\T_{2}}(\S_{j})} \Theta_{\Pi}(\tilde{t}_{1}\tilde{a}\tilde{t}_{2}) |\det(\Id - \Ad(\tilde{t}_{1}\tilde{a}\tilde{t}_{2})^{-1})_{\mathfrak{g}/\mathfrak{h}(\S_{j})}|^{2} \cfrac{\varepsilon(\tilde{t}_{1}\tilde{a}\tilde{y})\d_{\S_{j}}(\tilde{t}_{1}\tilde{a}\tilde{t}_{2})}{|\det(\Id - \Ad(\tilde{t}_{1}\tilde{a}\tilde{t}_{2})^{-1})_{\eta(\S_{j})}|} \label{EquationH(Si)2} \\
     & & \displaystyle\int_{\GL(\X_{j})/\T'_{1}(\S_{j}) \times \A'(\S_{j})} \displaystyle\int_{\widetilde{\G}(\U_{j})} \Chc_{\W_{0, j}}(\tilde{t}_{2}\tilde{y}) \varepsilon(\tilde{t}_{1}\tilde{a}\tilde{y}) \d'_{\S_{j}}(g\tilde{t}_{1}\tilde{a}g^{-1}\tilde{y}) \varphi^{\widetilde{\K'}}_{\N'(\S_{j})}(g\tilde{t}_{1}\tilde{a}g^{-1}\tilde{y}) d\tilde{y}\overline{dg} d\tilde{t}_{2}d\tilde{a}d\tilde{t}_{1} \nonumber 
\end{eqnarray}
Let $i \in [|1, r|]$ and $\varphi \in \mathscr{C}^{\infty}_{c}(\widetilde{\G'})$ such that $\supp(\varphi) \subseteq \widetilde{\G'}\cdot\widetilde{\H'}(\S_{i})$. On one hand, using Equation \eqref{EquationH(Si)1}, we get:
\begin{eqnarray*}
\Theta'_{\Pi}(\varphi) & = & m_{i} \displaystyle\int_{\check{\H}'_{\S_{i}}} \Theta'_{\Pi}(c(\S_{i})\check{p}(\check{h}')c(\S_{i})^{-1}) |\Delta_{\G'}(\check{h}')|^{2} \Lambda(c(\S_{i})\check{p}(\check{h}')c(\S_{i})^{-1})\displaystyle\int_{\L'(\S_{i}) / \H'(\S_{i})} \varphi^{\widetilde{\K'}}_{\widetilde{\N'}(\S_{i})}(l'c(\S_{i})\check{p}(\check{h}')c(\S_{i})^{-1}l'^{-1}) \overline{dl'}d\check{h}' \\
                                 & = & m_{i} \displaystyle\int_{\check{\T}'_{1, \S_{i}}} \displaystyle\int_{\check{\A}'_{\S_{i}}} \displaystyle\int_{\check{\T}'_{2, \S_{i}}} \Theta'_{\Pi}(c(\S_{i})\check{p}(\check{t}'_{1}\check{a}'\check{t}'_{2})c(\S_{i})^{-1}) |\Delta_{\G'}(\check{t}'_{1}\check{a}'\check{t}'_{2})|^{2} \Lambda(c(\S_{i})\check{p}(\check{t}'_{1}\check{a}'\check{t}'_{2})c(\S_{i})^{-1})\\ 
                                 & & \qquad \displaystyle\int_{\GL(\X_{i}) / \T'_{1}(\S_{i}) \times \A'(\S_{i})} \displaystyle\int_{\G(\U_{i}) / \T'_{2}(\S_{i})}\varphi^{\widetilde{\K'}}_{\widetilde{\N'}(\S_{i})}(g_{1}g_{2}c(\S_{i})\check{p}(\check{t}'_{1}\check{a}'\check{t}'_{2})c(\S_{i})^{-1}g^{-1}_{2}g^{-1}_{1}) \overline{dg_{2}}\overline{dg_{1}}d\check{t}'_{2}d\check{a}'d\check{t}'_{1}.
\end{eqnarray*} 
In particular, for every $j < i$, we get from Equation \eqref{DecompositionHiandHj}:
\begin{eqnarray*}
\Theta'_{\Pi}(\varphi) & = & m_{i} \displaystyle\int_{\check{\T}'_{1, \S_{j}}} \displaystyle\int_{\check{\A}'_{\S_{j}}} \displaystyle\int_{\check{\T}'_{1, \S^{j}_{i-j}}} \displaystyle\int_{\check{\A}'_{\S^{j}_{i-j}}} \displaystyle\int_{\check{\T}'_{2, \S_{i}}} \Theta'_{\Pi}(c(\S_{i})\check{p}(\check{t}_{j}\check{a}_{j}\check{h}_{j}\check{b}_{j}\check{t}_{i})c(\S_{i})^{-1}) |\Delta_{\G'}(\check{t}_{j}\check{a}_{j}\check{h}_{j}\check{b}_{j}\check{t}_{i})|^{2} \Lambda(c(\S_{i})\check{p}(\check{t}_{j}\check{a}_{j}\check{h}_{j}\check{b}_{j}\check{t}_{i})c(\S_{i})^{-1})\\ 
                                 & & \qquad \displaystyle\int_{\GL(\X_{i}) / \T'_{1}(\S_{i}) \times \A'(\S_{i})} \displaystyle\int_{\G(\U_{i}) / \T'_{2}(\S_{i})}\varphi^{\widetilde{\K'}}_{\widetilde{\N'}(\S_{i})}(g_{1}g_{2}c(\S_{i})\check{p}(\check{t}_{j}\check{a}_{j}\check{h}_{j}\check{b}_{j}\check{t}_{i})c(\S_{i})^{-1}g^{-1}_{2}g^{-1}_{1}) \overline{dg_{2}}\overline{dg_{1}}d\check{t}_{j}d\check{a}_{j}d\check{h}_{j}d\check{b}_{j}d\check{t}_{i}.
\end{eqnarray*}

\noindent On the other hand, it follows from Equation \eqref{EquationH(Si)2} that {\small
\begin{eqnarray*}
& & \Theta'_{\Pi}(\varphi) \\
& = & \sum\limits_{j=0}^{\min(i, p)} \C_{i} \displaystyle\int_{\widetilde{\T_{1}}(\S_{j})} \displaystyle\int_{\widetilde{\A}(\S_{j})} \displaystyle\int_{\widetilde{\T_{2}}(\S_{j})} \Theta_{\Pi}(\tilde{t}_{1}\tilde{a}\tilde{t}_{2}) |\det(\Id - \Ad(\tilde{t}_{1}\tilde{a}\tilde{t}_{2})^{-1})_{\mathfrak{g}/\mathfrak{h}(\S_{j})}|^{2} \cfrac{\varepsilon(\tilde{t}_{1}\tilde{a})\d_{\S_{j}}(\tilde{t}_{1}\tilde{a}\tilde{t}_{2})}{|\det(\Id - \Ad(\tilde{t}_{1}\tilde{a}\tilde{t}_{2}))_{\eta(\S_{j})}|} \\
     & & \displaystyle\int_{\GL(\X_{j})/\T'_{1}(\S_{j}) \times \A'(\S_{j})} \displaystyle\int_{\widetilde{\G}(\U_{j})} \Chc_{\W_{0, j}}(\tilde{t}_{2}\tilde{y}) \varepsilon(\tilde{t}_{1}\tilde{a}\tilde{y}) \d'_{\S_{j}}(g\tilde{t}_{1}\tilde{a}g^{-1}\tilde{y}) \varphi^{\widetilde{\K'}}_{\widetilde{\N'}(\S_{j})}(g\tilde{t}_{1}\tilde{a}g^{-1}\tilde{y}) d\tilde{y}\overline{dg} d\tilde{t}_{2}d\tilde{a}d\tilde{t}_{1} \\
      & = & \sum\limits_{j=0}^{i} \C_{j} \displaystyle\int_{\check{\T}_{1, \S_{j}}} \displaystyle\int_{\check{\A}_{\S_{j}}}\displaystyle\int_{\check{\T}_{2, \S_{j}}} \Theta_{\Pi}(c(\S_{j})\check{p}(\check{t}_{1}\check{a}\check{t}_{2})c(\S_{j})^{-1}) |\det(\Id - \Ad(c(\S_{j})\check{p}(\check{t}_{1}\check{a}\check{t}_{2})c(\S_{j})^{-1})^{-1})_{\mathfrak{g}/\mathfrak{h}(\S_{j})}|^{2} \cfrac{\varepsilon(c(\S_{j})\check{p}(\check{t}_{1}\check{a})c(\S_{j})^{-1})\d_{\S_{j}}(c(\S_{j})\check{p}(\check{t}_{1}\check{a}\check{t}_{2})c(\S_{j})^{-1})}{|\det(\Id - \Ad(c(\S_{j})\check{p}(\check{t}_{1}\check{a}\check{t}_{2})c(\S_{j})^{-1}))_{\eta(\S_{j})}|} \\
     & & \displaystyle\int_{\GL(\X_{j})/\T'_{1}(\S_{j}) \times \A'(\S_{j})} \displaystyle\int_{\widetilde{\G}(\U_{j})} \Chc_{\W_{0, j}}(\check{p}(\check{t}_{2})\tilde{y}) \varepsilon(c(\S_{j})\check{p}(\check{t}_{1}\check{a})c(\S_{j})^{-1}\tilde{y}) \d'_{\S_{j}}(gc(\S_{j})\check{p}(\check{t}_{1}\check{a})c(\S_{j})^{-1}g^{-1}\tilde{y}) \varphi^{\widetilde{\K'}}_{\widetilde{\N}'(\S_{j})}(gc(\S_{j})\check{p}(\check{t}_{1}\check{a})c(\S_{j})^{-1}g^{-1}\tilde{y}) d\tilde{y}\overline{dg} d\check{t}_{2}d\check{a}d\check{t}_{1} \\
  & = & \sum\limits_{j=0}^{\min(i, p)} \C_{j} \displaystyle\int_{\check{\T}_{1, \S_{j}}} \displaystyle\int_{\check{\A}_{\S_{j}}}\displaystyle\int_{\check{\T}_{2, \S_{j}}} \Theta_{\Pi}(c(\S_{j})\check{p}(\check{t}_{1}\check{a}\check{t}_{2})c(\S_{j})^{-1}) |\det(\Id - \Ad(c(\S_{j})\check{p}(\check{t}_{1}\check{a}\check{t}_{2})c(\S_{j})^{-1})^{-1})_{\mathfrak{g}/\mathfrak{h}(\S_{j})}|^{2} \cfrac{\varepsilon(c(\S_{j})\check{p}(\check{t}_{1}\check{a})c(\S_{j})^{-1})\d_{\S_{j}}(c(\S_{j})\check{p}(\check{t}_{1}\check{a}\check{t}_{2})c(\S_{j})^{-1})}{|\det(\Id - \Ad(c(\S_{j})\check{p}(\check{t}_{1}\check{a}\check{t}_{2})c(\S_{j})^{-1}))_{\eta(\S_{j})}|} \\
     & & \sum\limits_{\sigma \in \mathscr{W}(\mathfrak{g}(\U_{j}))} \varepsilon(\sigma) \lim\limits_{\underset{r \in \E_{\sigma, \S^{j}_{i-j}}}{r \to 1}} \displaystyle\int_{\GL(\X_{j})/\T'_{1}(\S_{j}) \times \A'(\S_{j})} \det(\check{t}_{2})_{\W^{\mathfrak{t}_{2, \S_{j}}}} \Delta_{\Psi'(\mathfrak{g}(\U_{j}))}(\check{t}_{2})^{-1} \displaystyle\int_{\check{\H}'_{\S^{j}_{i-j}}} \cfrac{\det^{-\frac{1}{2}}(\sigma^{-1}(\check{h}'))_{\W^{\mathfrak{t}_{2, \S_{j}}}}}{\det(1-p(\check{h}')rp(\check{t}_{2}))_{\sigma\W^{\mathfrak{t}_{2, \S_{j}}}}}  \displaystyle\int_{\widetilde{\G}(\U_{j})/\H'(\S^{j}_{i-j})}\\
     & & \Delta_{\Psi'^{+}(\mathfrak{g}(\U_{j}))}(\check{h}')\varepsilon(c(\S_{j})\check{p}(\check{t}_{1}\check{a})c(\S_{j})^{-1}g_{2}c(\S^{j}_{i-j})\check{p}(\check{h}')c(\S^{j}_{i-j})^{-1}g^{-1}_{2}) \d'_{\S_{j}}(g_{1}c(\S_{j})\check{p}(\check{t}_{1}\check{a})c(\S_{j})^{-1}g^{-1}_{1}g_{2}c(\S^{j}_{i-j})\check{p}(\check{h}')c(\S^{j}_{i-j})^{-1}g^{-1}_{2}) \\ 
     & & \varphi^{\widetilde{\K'}}_{\widetilde{\N}'(\S_{j})}(g_{1}c(\S_{j})\check{p}(\check{t}_{1}\check{a})c(\S_{j})^{-1}g^{-1}_{1}g_{2}c(\S^{j}_{i-j})\check{p}(\check{h}')c(\S^{j}_{i-j})^{-1}g^{-1}_{2}) \overline{dg_{2}} d\check{h}'\overline{dg_{1}} d\check{t}_{2}d\check{a}d\check{t}_{1}
\end{eqnarray*}}
Using Lemma \ref{IntegrationLemmaProposition67} and the equality $c(\S_{j})c(\S^{j}_{i-j}) = c(\S_{i})$, we get:{\small
\begin{eqnarray*}
& & \Theta'_{\Pi}(\varphi) \\
  & = & \sum\limits_{j=0}^{\min(i, p)} \C_{j} \displaystyle\int_{\check{\T}_{1, \S_{j}}} \displaystyle\int_{\check{\A}_{\S_{j}}}\displaystyle\int_{\check{\T}_{2, \S_{j}}} \Theta_{\Pi}(c(\S_{j})\check{p}(\check{t}_{1}\check{a}\check{t}_{2})c(\S_{j})^{-1}) |\det(\Id - \Ad(c(\S_{j})\check{p}(\check{t}_{1}\check{a}\check{t}_{2})c(\S_{j})^{-1})^{-1})_{\mathfrak{g}/\mathfrak{h}(\S_{j})}|^{2} \cfrac{\varepsilon(c(\S_{j})\check{p}(\check{t}_{1}\check{a})c(\S_{j})^{-1})\d_{\S_{j}}(c(\S_{j})\check{p}(\check{t}_{1}\check{a}\check{t}_{2})c(\S_{j})^{-1})}{|\det(\Id - \Ad(c(\S_{j})\check{p}(\check{t}_{1}\check{a}\check{t}_{2})c(\S_{j})^{-1}))_{\eta(\S_{j})}|} \\
     & &\sum\limits_{\sigma \in \mathscr{W}(\mathfrak{g}(\U_{j}))} \varepsilon(\sigma) \lim\limits_{\underset{r \in \E_{\sigma, \S^{j}_{i-j}}}{r \to 1}} \displaystyle\int_{\GL(\X_{i})/\T'_{1}(\S_{i}) \times \A'(\S_{i})} \det(\check{t}_{2})_{\W^{\mathfrak{t}_{2, \S_{j}}}} \Delta_{\Psi'(\mathfrak{g}(\U_{j}))}(\check{t}_{2})^{-1} \displaystyle\int_{\check{\H}'_{\S^{j}_{i-j}}} \cfrac{\det^{-\frac{1}{2}}(\sigma^{-1}(\check{h}'))_{\W^{\mathfrak{t}_{2, \S_{j}}}}\Delta_{\Psi'^{+}(\mathfrak{g}(\U_{j}))}(\check{h}')}{\det(1-p(\check{h}')rp(\check{t}_{2}))_{\sigma\W^{\mathfrak{t}_{2, \S_{j}}}}}  \cfrac{\D_{\L'(\S_{i})}(c(\S_{i})\check{p}(\check{t}_{1}\check{a}\check{h}')c(\S_{i})^{-1})}{\D_{\L'(\S_{j})}(c(\S_{i})\check{p}(\check{t}_{1}\check{a}\check{h}')c(\S_{i})^{-1})} \\
     & & \displaystyle\int_{\widetilde{\G}(\U_{i})/\T'_{2}(\S_{i})} \varepsilon(c(\S_{i})\check{p}(\check{t}_{1}\check{a}\check{h}')c(\S_{i})^{-1}) \d'_{\S_{j}}(g_{1}g_{2}c(\S_{i})\check{p}(\check{t}_{1}\check{a}\check{h}')c(\S_{i})g^{-1}_{2}g^{-1}_{1})  \varphi^{\widetilde{\K'}}_{\widetilde{\N'}(\S_{j})}(g_{1}g_{2}c(\S_{i})\check{p}(\check{t}_{1}\check{a}\check{h}')c(\S_{i})^{-1}g^{-1}_{2}g^{-1}_{1}) \overline{dg_{2}} d\check{h}'\overline{dg_{1}} d\check{t}_{2}d\check{a}d\check{t}_{1}\, .
\end{eqnarray*}}
\noindent As explained in Remark \ref{RemarkDecompositionHSij}, for every $0 \leq j \leq i$, we have the following decomposition $\H'(\S_{i}) = \T'_{1}(\S_{j}) \times \A'(\S_{j}) \times \T'_{1}(\S^{j}_{i-j}) \times \A'(\S^{j}_{i-j}) \times \T'_{2}(\S_{i})$. In particular, every element $h \in \H'(\S_{i})$ can be written as $h = t_{j}a_{j}h_{j}b_{j}t_{i}$. We get similar results for $\H'_{\S_{i}}$. In particular, we get that the value of $\Theta'_{\Pi}$ on $\H'(\S_{i})$ is given by:
\begin{equation*}
\Theta'_{\Pi}(c(\S_{i})\check{p}(\check{h}')c(\S_{i})^{-1}) |\Delta_{\G'}(\check{h}')|^{2} \Lambda(c(\S_{i})\check{p}(\check{h})c(\S_{i})^{-1}) = \sum\limits_{j=0}^{\min(i, p)}\C_{j} \sum\limits_{\sigma \in \mathscr{W}(\mathfrak{g}(\U_{j}))} \varepsilon(\sigma) 
\end{equation*}
\begin{equation*}
 \Delta_{\Psi'^{+}(\mathfrak{g}(\U_{j}))}(\check{h}_{j}\check{b}_{j}\check{t}_{i})\det^{-\frac{1}{2}}(\sigma^{-1}(\check{h}_{j}\check{b}_{j}\check{t}_{i}))_{\W^{\mathfrak{t}_{2, \S_{j}}}} \cfrac{\D_{\L'(\S_{i})}(c(\S_{i})\check{p}(\check{t}_{j}\check{a}_{j}\check{h}_{j}\check{b}_{j}\check{t}_{i})c(\S_{i})^{-1})}{\D_{\L'(\S_{j})}(c(\S_{i})\check{p}(\check{t}_{j}\check{a}_{j}\check{h}_{j}\check{b}_{j}\check{t}_{i})c(\S_{i})^{-1})} \varepsilon(c(\S_{j}\check{p}(\check{h}_{j}\check{b}_{j}\check{t}_{i})c(\S_{j})^{-1})  
\end{equation*}
\begin{equation*}
\lim\limits_{\underset{r \in \E_{\sigma, \S^{j}_{i-j}}}{r \to 1}} \displaystyle\int_{\check{\T}_{2, \S_{j}}} \cfrac{\d_{\S_{j}}(c(\S_{j})\check{p}(\check{t}_{j}\check{a}_{j}\check{h})c(\S_{j})^{-1})\Theta_{\Pi}(c(\S_{j})\check{p}(\check{t}_{j}\check{a}_{j}\check{h})c(\S_{j})^{-1}) |\det(\Id - \Ad(c(\S_{j})\check{p}(\check{t}_{j}\check{a}_{j}\check{h})c(\S_{j})^{-1})^{-1})_{\mathfrak{g}/\mathfrak{h}(\S_{j})}|^{2} \det(\check{h})^{\frac{1}{2}}_{\W^{\mathfrak{t}_{2, \S_{j}}}}}{\Delta_{\Psi'(\mathfrak{g}(\U_{j}))}(\check{h})\det(\Id - \Ad(c(\S_{j})\check{p}(\check{t}_{j}\check{a}_{j}\check{h})c(\S_{j})^{-1}))_{\eta(\S_{j})}\det(1-p(\check{h}_{j}\check{b}_{j}\check{t}_{i})rp(\check{h}))_{\sigma\W^{\mathfrak{t}_{2, \S_{j}}}}} d\check{h} \, .
\end{equation*}

\noindent Using Equation \eqref{DecompositionPsik}, we get
\begin{equation*}
\D_{\L'(\S_{i})}(c(\S_{i})\check{p}(\check{t}_{j}\check{a}_{j}\check{h}_{j}\check{b}_{j}\check{t}_{i})c(\S_{i})^{-1}) = \prod\limits_{\alpha \in \Psi'^{+}(\mathfrak{l}'(\S_{i}))} |1 - (\check{t}_{j}\check{a}_{j}\check{h}_{j}\check{b}_{j}\check{t}_{i})^{\alpha}| = \prod\limits_{\alpha \in \Psi'^{+}(\mathfrak{gl}'(\X_{i}))} |1 - (\check{t}_{j}\check{a}_{j}\check{h}_{j}\check{b}_{j})^{\alpha}| \prod\limits_{\alpha \in \Psi'^{+}(\mathfrak{g}(\U_{i}))} |1 - (\check{t}_{i})^{\alpha}|
\end{equation*}  
and 
\begin{equation*}
\D_{\L'(\S_{j})}(c(\S_{i})\check{p}(\check{t}_{j}\check{a}_{j}\check{h}_{j}\check{b}_{j}\check{t}_{i})c(\S_{i})^{-1}) = \prod\limits_{\alpha \in \Psi'^{+}(\mathfrak{l}'(\S_{j}))} |1 - (\check{t}_{j}\check{a}_{j}\check{h}_{j}\check{b}_{j}\check{t}_{i})^{\alpha}| = \prod\limits_{\alpha \in \Psi'^{+}(\mathfrak{gl}'(\X_{j}))} |1 - (\check{t}_{j}\check{a}_{j})^{\alpha}| \prod\limits_{\alpha \in \Psi'^{+}(\mathfrak{g}(\U_{j}))} |1 - (\check{h}_{j}\check{b}_{j}\check{t}_{i})^{\alpha}|,
\end{equation*}
so in particular
\begin{equation*} 
\cfrac{\D_{\L'(\S_{0})}(c(\S_{i})\check{p}(\check{t}_{j}\check{a}_{j}\check{h}_{j}\check{b}_{j}\check{t}_{i})c(\S_{i})^{-1})}{\D_{\L'(\S_{j})}(c(\S_{i})\check{p}(\check{t}_{j}\check{a}_{j}\check{h}_{j}\check{b}_{j}\check{t}_{i})c(\S_{i})^{-1})|\Delta_{\G'}(\check{t}_{j}\check{a}_{j}\check{h}_{j}\check{b}_{j}\check{t}_{i})|^{2}} = \cfrac{1}{\prod\limits_{\alpha \in \Psi'^{+}} |1 - (\check{t}_{j}\check{a}_{j}\check{h}_{j}\check{b}_{j}\check{t}_{i})^{\alpha}|\prod\limits_{\alpha \in \Psi'^{+}(\mathfrak{gl}'(\X_{j}))} |1 - (\check{t}_{j}\check{a}_{j})^{\alpha}| \prod\limits_{\alpha \in \Psi'^{+}(\mathfrak{g}(\U_{j}))} |1 - (\check{h}_{j}\check{b}_{j}\check{t}_{i})^{\alpha}|}.
\end{equation*}
Finally, 
\begin{equation*}
\Delta_{\Psi'}(\check{h}') \Theta'_{\Pi}(c(\S_{i})\check{p}(\check{h}')c(\S_{i})^{-1}) =
\end{equation*}
\begin{equation*} 
\sum\limits_{j=0}^{\min(i, p)} \C_{j} \sum\limits_{\sigma \in \mathscr{W}(\mathfrak{g}(\U_{j}))} \varepsilon(\sigma) \cfrac{\Delta_{\Psi'^{+}(\mathfrak{g}(\U_{j}))}(\check{h}_{j}\check{b}_{j}\check{t}_{i})\det^{-\frac{1}{2}}(\sigma^{-1}(\check{h}_{j}\check{b}_{j}\check{t}_{i}))_{\W^{\mathfrak{t}_{2, \S_{j}}}}\Delta_{\Psi'}(\check{t}_{j}\check{a}_{j}\check{h}_{j}\check{b}_{j}\check{t}_{i})\varepsilon(c(\S_{j}\check{p}(\check{h}_{j}\check{b}_{j}\check{t}_{i})c(\S_{j})^{-1})}{\prod\limits_{\alpha \in \Psi'^{+}} |1 - (\check{t}_{j}\check{a}_{j}\check{h}_{j}\check{b}_{j}\check{t}_{i})^{\alpha}|\prod\limits_{\alpha \in \Psi'^{+}(\mathfrak{gl}(\X_{j}))} |1 - (\check{t}_{j}\check{a}_{j})^{\alpha}| \prod\limits_{\alpha \in \Psi'^{+}(\mathfrak{g}(\U_{j}))} |1 - (\check{h}_{j}\check{b}_{j}\check{t}_{i})^{\alpha}|}
\end{equation*}
\begin{equation*}
\lim\limits_{\underset{r \in \E_{\sigma, \S^{j}_{i-j}}}{r \to 1}} \displaystyle\int_{\check{\T}_{2, \S_{j}}} \cfrac{\left(\Theta_{\Pi}(c(\S_{j})\check{p}(\check{t}_{j}\check{a}_{j}\check{h})c(\S_{j})^{-1}) \Delta_{\Psi'}(\check{t}_{j}\check{a}_{j}\check{h})\right)  \Delta_{\Phi'}(\check{t}_{j}\check{a}_{j}\check{h})\det(\check{h}')^{\frac{1}{2}}_{\W^{\mathfrak{t}_{2, \S_{j}}}}}{\Delta_{\Psi'(\mathfrak{g}(\U_{j}))}(\check{h})\det(\Id - \Ad(c(\S_{j})\check{p}(\check{t}_{j}\check{a}_{j}\check{h})c(\S_{j})^{-1}))_{\eta(\S_{j})}\det(1-p(\check{h}_{j}\check{b}_{j}\check{t}_{i})rp(\check{h}))_{\sigma\W^{\mathfrak{t}_{2, \S_{j}}}}} d\check{h}
\end{equation*}
Again, we get from Equation \eqref{DecompositionPsik} that
\begin{eqnarray*}
\Delta_{\Psi'}(\check{t}_{j}\check{a}_{j}\check{h}) & = & \Delta_{\Psi'(\mathfrak{gl}(\X_{i})}(\check{t}_{j}\check{a}_{j}) \Delta_{\Psi'(\mathfrak{g}(\U_{j}))}(\check{h}) \Delta_{\Psi'(\eta'(\S_{j}))}(\check{t}_{j}\check{a}_{j}\check{h}) \\
 & = & (\check{t}_{j}\check{a}_{j})^{-\rho'(\mathfrak{gl}(\X_{i})) - \rho'(\eta'(\S_{j}))} \check{h}^{- \rho'(\eta'(\S_{j}))}\prod\limits_{\alpha \in \Psi'^{+}(\mathfrak{gl}(\X_{j}))}(1 - (\check{t}_{j}\check{a}_{j})^{\alpha}) \prod\limits_{\alpha \in \Psi'^{+}(\eta'(\S_{j}))}(1 - (\check{t}_{j}\check{a}_{j}\check{h})^{\alpha}) \Delta_{\Psi'(\mathfrak{g}(\U_{j}))}(\check{h}),
\end{eqnarray*}
then,
\begin{equation*}
\Delta_{\Psi'}(\check{h}') \Theta'_{\Pi}(c(\S_{i})\check{p}(\check{h}')c(\S_{i})^{-1}) = \sum\limits_{j=0}^{\min(i, p)}\sum\limits_{\sigma \in \mathscr{W}(\mathfrak{g}(\U_{j}))} \varepsilon(\sigma) \det^{-\frac{1}{2}}(\sigma^{-1}(\check{h}_{j}\check{b}_{j}\check{t}_{i}))_{\W^{\mathfrak{t}_{2, \S_{j}}}} \varepsilon(c(\S_{j}\check{p}(\check{h}_{j}\check{b}_{j}\check{t}_{i})c(\S_{j})^{-1}) 
\end{equation*}
\begin{equation*}
\cfrac{\prod\limits_{\alpha \in \Psi'^{+}} 1 - \check{h}'^{\alpha}}{\prod\limits_{\alpha \in \Psi'^{+}} |1 - \check{h}'^{\alpha}|}\cfrac{\prod\limits_{\alpha \in \Psi'^{+}(\mathfrak{gl}(\X_{i}))} 1 - (\check{t}_{j}\check{a}_{j})^{\alpha}}{\prod\limits_{\alpha \in \Psi'^{+}(\mathfrak{gl}(\X_{i}))} |1 - (\check{t}_{j}\check{a}_{j})^{\alpha}|} \cfrac{\prod\limits_{\alpha \in \Psi'^{+}(\mathfrak{g}(\U_{j}))} 1 - (\check{t}_{j}\check{a}_{j}\check{h}_{j}\check{b}_{j}\check{t}_{i})^{\alpha}}{\prod\limits_{\alpha \in \Psi'^{+}(\mathfrak{g}(\U_{j}))} |1 - (\check{t}_{j}\check{a}_{j}\check{h}_{j}\check{b}_{j}\check{t}_{i})^{\alpha}|}
\lim\limits_{\underset{r \in \E_{\sigma, \S^{j}_{i-j}}}{r \to 1}} \displaystyle\int_{\check{\T}_{2, \S_{j}}} \cfrac{\left(\Theta_{\Pi}(c(\S_{j})\check{p}(\check{t}_{j}\check{a}_{j}\check{h})c(\S_{j})^{-1}) \Delta_{\Psi'}(\check{t}_{j}\check{a}_{j}\check{h})\right) \det(\check{h})^{\frac{1}{2}}_{\W^{\mathfrak{t}_{2, \S_{j}}}}}{\det(1-p(\check{h}_{j}\check{b}_{j}\check{t}_{i})rp(\check{h}))_{\sigma\W^{\mathfrak{t}_{2, \S_{j}}}}} d\check{h}
\end{equation*}
Using \cite[Theorem~10~35]{KNA2} and \cite[Theorem~10.48]{KNA2}, it follows that
\begin{equation*}
\Theta_{\Pi}(c(\S_{j})\check{p}(\check{t}_{j}\check{a}_{j}\check{h})c(\S_{j})^{-1}) \Delta_{\Psi'}(\check{t}_{j}\check{a}_{j}\check{h}) = \sum\limits_{w \in \mathscr{S}_{p+q}} c(w) (\check{t}_{j}\check{a}_{j}\check{h})^{w\lambda} = \sum\limits_{w \in \mathscr{S}_{p+q}} c(w) (\check{t}_{j}\check{a}_{j})^{w\lambda}\check{h}^{w\lambda},
\end{equation*}
where $c(w)$ are complex numbers. Using that
\begin{equation*}
\varepsilon(c(\S_{j}\check{p}(\check{h}_{j}\check{b}_{j}\check{t}_{i})c(\S_{j})^{-1})\cfrac{\prod\limits_{\alpha \in \Psi'^{+}} 1 - \check{h}'^{\alpha}}{\prod\limits_{\alpha \in \Psi'^{+}} |1 - \check{h}'^{\alpha}|}\cfrac{\prod\limits_{\alpha \in \Psi'^{+}(\mathfrak{gl}(\X_{i}))} 1 - (\check{t}_{j}\check{a}_{j})^{\alpha}}{\prod\limits_{\alpha \in \Psi'^{+}(\mathfrak{gl}(\X_{i}))} |1 - (\check{t}_{j}\check{a}_{j})^{\alpha}|} \cfrac{\prod\limits_{\alpha \in \Psi'^{+}(\mathfrak{g}(\U_{j}))} 1 - (\check{t}_{j}\check{a}_{j}\check{h}_{j}\check{b}_{j}\check{t}_{i})^{\alpha}}{\prod\limits_{\alpha \in \Psi'^{+}(\mathfrak{g}(\U_{j}))} |1 - (\check{t}_{j}\check{a}_{j}\check{h}_{j}\check{b}_{j}\check{t}_{i})^{\alpha}|}
\end{equation*}
is of norm $1$, it follows from Lemma \ref{LemmaComplexIntegrals} and Theorem \ref{TheoremDiscreteSeries2} that
\begin{equation*}
\lim\limits_{\underset{r \in \E_{\sigma, \S^{j}_{i-j}}}{r \to 1}} \displaystyle\int_{\check{\T}_{2, \S_{j}}} \cfrac{\left(\Theta_{\Pi}(c(\S_{j})\check{p}(\check{t}_{j}\check{a}_{j}\check{h})c(\S_{j})^{-1}) \Delta_{\Psi'}(\check{t}_{j}\check{a}_{j}\check{h})\right) \det(\check{h})^{\frac{1}{2}}_{\W^{\mathfrak{t}_{2, \S_{j}}}}}{\det(1-p(\check{h}_{j}\check{b}_{j}\check{t}_{i})rp(\check{h}))_{\sigma\W^{\mathfrak{t}_{2, \S_{j}}}}} d\check{h}
\end{equation*}
is a finite sum of bounded exponentials, and then, for every $i$,
\begin{equation*}
\sup\limits_{\tilde{h}' \in \widetilde{\H}'(\S_{i})} |\D(\tilde{h}')|^{\frac{1}{2}} |\Theta'_{\Pi}(\tilde{h}')| < \infty.
\end{equation*}

\noindent The proof is similar if $r \leq p$. Note that in this case, $\H_{\S_{i}} = \H'_{\S_{i}}$ (resp. $\H(\S_{i}) = \H'(\S_{i})$) for every $0 \leq i \leq r$, with $\S_{i} = \{e_{1} - e_{p+1}, \ldots, e_{i} - e_{p+i}\}$, and in particular, the Cauchy-Harish-Chandra integrals $\Chc_{\tilde{h}_{i}}, \tilde{h}_{i} \in \H(\S_{i}), r+1 \leq i \leq p$, does not give any contribution to $\Theta'_{\Pi}$ on the different Cartan subgroups $\widetilde{\H}'(\S_{k})$, $0 \leq k \leq r$. It concludes the proof of the proposition.

\end{proof}

\begin{lemme}

Let $(\G = \U(p, q), \G' = \U(r, s)), p+q = r+s$ and $\Pi \in \mathscr{R}(\widetilde{\U}(p, q), \omega)$ a discrete series representation with Harish-Chandra parameter $\lambda_{a, b}$ as in Theorem \ref{TheoremDSHCP}. For every $z \in \Z(\mathscr{U}(\mathfrak{gl}(r+s, \mathbb{C})))$, we get:
\begin{equation*}
z\Theta'_{\Pi} = \chi_{\lambda'_{a, b}}(z)\Theta'_{\Pi},
\end{equation*}
where $\chi_{\lambda'_{a, b}} = \lambda'_{a, b}(\gamma(z))$ as in Appendix \ref{AppendixA}, Remark \ref{RemarkInfinitesimalCharacter}.

\label{ImportantLemma1}

\end{lemme}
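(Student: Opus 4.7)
The plan is to combine three ingredients: the transfer of eigendistributions, the explicit formula on the compact Cartan already established in Proposition \ref{ImportantProposition1}, and the Harish-Chandra isomorphism for $\Z(\mathscr{U}(\mathfrak{g}'_{\mathbb{C}}))$.

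First, since $\Pi$ is a discrete series representation of Harish-Chandra parameter $\lambda_{a,b}$, its character $\Theta_{\Pi}$ is an invariant eigendistribution with infinitesimal character $\chi_{\lambda_{a,b}}$, i.e. $z \Theta_{\Pi} = \chi_{\lambda_{a,b}}(z)\Theta_{\Pi}$ for every $z \in \Z(\mathscr{U}(\mathfrak{g}_{\mathbb{C}}))$. By Theorem \ref{TheoremTransferOfEigendistributions}, the distribution $\Theta'_{\Pi} = \Chc^{*}(\Theta_{\Pi})$ belongs to $\Eig(\widetilde{\G'})^{\widetilde{\G'}}$. In particular there exists a (unique) character $\chi$ of $\Z(\mathscr{U}(\mathfrak{g}'_{\mathbb{C}}))$ such that $z \Theta'_{\Pi} = \chi(z) \Theta'_{\Pi}$ for every $z$; by the Harish-Chandra isomorphism this character is of the form $\chi_{\mu}$ for some $\mu \in \mathfrak{h}'^{*}_{\mathbb{C}}$, uniquely determined up to the action of $\mathscr{W}(\mathfrak{g}'_{\mathbb{C}}, \mathfrak{h}'_{\mathbb{C}}) = \mathscr{S}_{r+s}$.

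To identify $\mu$, I restrict the eigendistribution equation to the regular part of the compact Cartan $\widetilde{\H'}^{\reg}$. Since $\Theta'_{\Pi}$ is analytic on $\widetilde{\G'}^{\reg}$ (Harish-Chandra's regularity theorem, see \cite[Theorem~2]{HAR4}) and nonzero on $\widetilde{\H'}^{\reg}$ by Proposition \ref{ImportantProposition1}, the character $\chi$ is determined by the action of $\Z(\mathscr{U}(\mathfrak{g}'_{\mathbb{C}}))$ on the function appearing there. The radial-component computation of Harish-Chandra (cf. the discussion in Appendix \ref{AppendixA} and \cite[Lemma~44]{HAR5}) shows that when one applies $z$ to an expression of the form $\Delta_{\Psi'}(\check{h}')^{-1} \sum_{\sigma \in \mathscr{S}_{r}\times\mathscr{S}_{s}} \varepsilon(\sigma) (\sigma \check{h}')^{\nu}$ on $\check{\H}'^{\reg}$, the result is the same expression multiplied by $\nu(\gamma(z)) = \chi_{\nu}(z)$, because $\gamma(z) \in \mathscr{U}(\mathfrak{h}'_{\mathbb{C}})^{\mathscr{W}}$ acts on each exponential $(\sigma \check{h}')^{\nu}$ by the scalar $\nu(\gamma(z))$, this scalar being independent of $\sigma$ by $\mathscr{W}$-invariance of $\gamma(z)$. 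Applying this with $\nu = \tau_{a,b}\lambda_{a,b}$ and comparing with the formula of Proposition \ref{ImportantProposition1} forces $\chi = \chi_{\tau_{a,b}\lambda_{a,b}}$.

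Finally it remains to observe that $\tau_{a,b}\lambda_{a,b}$ and $\lambda'_{a,b}$ lie in the same $\mathscr{S}_{r+s}$-orbit, so that $\chi_{\tau_{a,b}\lambda_{a,b}} = \chi_{\lambda'_{a,b}}$ by the Harish-Chandra isomorphism. This is a direct combinatorial check: both $n$-tuples are obtained by permuting the multiset $\{\alpha_{1},\dots,\alpha_{a},\beta_{1},\dots,\beta_{p-a},\gamma_{1},\dots,\gamma_{b},\delta_{1},\dots,\delta_{q-b}\}$, and in each of the three cases delineating $\tau_{a,b}$ in Proposition \ref{ImportantProposition1}, one exhibits explicitly an element of $\mathscr{S}_{r}\times\mathscr{S}_{s}$ (hence in $\mathscr{S}_{r+s}$) that conjugates $\tau_{a,b}\lambda_{a,b}$ to $\lambda'_{a,b}$. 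The main obstacle is the verification that the action of $\Z(\mathscr{U}(\mathfrak{g}'_{\mathbb{C}}))$ on the expression of Proposition \ref{ImportantProposition1} really produces the scalar $\chi_{\tau_{a,b}\lambda_{a,b}}(z)$: this is standard Harish-Chandra radial-component theory but must be invoked carefully because $\check{p}$ is not in general an isomorphism of double covers. The conclusion follows.
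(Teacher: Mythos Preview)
Your argument is correct, but it takes a different and more roundabout route than the paper. The paper's proof is essentially two lines: since $p+q=r+s$, one has $\mathfrak{g}_{\mathbb{C}} = \mathfrak{g}'_{\mathbb{C}} = \mathfrak{gl}(r+s,\mathbb{C})$, so $\Z(\mathscr{U}(\mathfrak{g}_{\mathbb{C}})) = \Z(\mathscr{U}(\mathfrak{g}'_{\mathbb{C}}))$ literally as algebras; then one invokes \cite[Theorem~1.4]{TOM4}, which says that $\Chc^{*}$ intertwines the action of this common center, i.e.\ $z\,\Chc^{*}(\Theta_{\Pi}) = \Chc^{*}(z\Theta_{\Pi}) = \chi_{\lambda_{a,b}}(z)\,\Chc^{*}(\Theta_{\Pi})$. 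The conclusion then follows immediately from the fact that $\lambda_{a,b}$ and $\lambda'_{a,b}$ are $\mathscr{S}_{r+s}$-conjugate, via Theorem~\ref{TheoremInfinitesimalCharacter}. Your approach instead reads off the infinitesimal character from the explicit formula on $\check{\H}'^{\reg}$ obtained in Proposition~\ref{ImportantProposition1}, using Harish-Chandra's radial-component identity $\Delta_{\Psi'}\cdot z \cdot \Delta_{\Psi'}^{-1} = \gamma(z)$ on the compact Cartan. This is valid, and has the merit of being self-contained once Proposition~\ref{ImportantProposition1} is in hand, but it misses the conceptual point that in the equal-rank case the two centers coincide and $\Chc^{*}$ is $\Z(\mathscr{U}(\mathfrak{g}_{\mathbb{C}}))$-equivariant, which makes the lemma immediate without any radial-component computation or appeal to the explicit exponent $\tau_{a,b}\lambda_{a,b}$.
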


\begin{proof}

Obviously, in the equal rank case, $\mathscr{U}(\mathfrak{g}_{\mathbb{C}}) = \mathscr{U}(\mathfrak{g}'_{\mathbb{C}})$. It follows from \cite[Theorem~1.4]{TOM4} that $\Chc^{*}(z\overline{\Theta_{\Pi}}) = z \Chc(\overline{\Theta_{\Pi}})$. Because $\lambda_{a, b}$ and $\lambda'_{a, b}$ are conjugated under $\mathscr{S}_{r+s}$, the result follows from Theorem \ref{TheoremInfinitesimalCharacter}.

\end{proof}

\begin{coro}

For every discrete series representation $\Pi$ of $\widetilde{\U}(p, q)$, we get 
\begin{equation*}
\Chc^{*}(\Theta_{\Pi}) = \C \Theta_{\theta_{r, s}(\Pi)}.
\end{equation*}
with $\C \in \mathbb{C}$.
\label{LastCorollary}
\end{coro}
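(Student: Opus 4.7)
The plan is to reduce the statement to Harish-Chandra's characterization of discrete series characters, Theorem \ref{TheoremDiscreteSeries2}, applied to the $\widetilde{\G'}$-invariant distribution $\Xi := \Chc^{*}(\Theta_{\Pi}) - \C\,\Theta_{\theta_{r, s}(\Pi)}$, where the constant $\C \in \mathbb{C}$ is to be chosen so that $\Xi$ vanishes pointwise on the regular set $\widetilde{\H'}^{\reg}$ of the compact Cartan of $\G'$.

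Conditions (1) and (2) of Theorem \ref{TheoremDiscreteSeries2} are immediate from the preceding results. For condition (1), Lemma \ref{ImportantLemma1} yields $z\,\Chc^{*}(\Theta_{\Pi}) = \chi_{\lambda'_{a, b}}(z)\,\Chc^{*}(\Theta_{\Pi})$ for every $z \in \Z(\mathscr{U}(\mathfrak{g}'_{\mathbb{C}}))$, while Paul's Theorem \ref{TheoremDSHCP} identifies $\theta_{r, s}(\Pi)$ as the discrete series of $\widetilde{\G'}$ with Harish-Chandra parameter $\lambda'_{a, b}$, so $\Theta_{\theta_{r, s}(\Pi)}$ transforms by the same infinitesimal character. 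Condition (2), the bound $\sup_{\tilde{g}' \in \widetilde{\G'}^{\reg}} |\D(\tilde{g}')|^{\frac{1}{2}}|\Xi(\tilde{g}')| < \infty$, follows from Proposition \ref{ImportantProposition2} for $\Chc^{*}(\Theta_{\Pi})$ and Remark \ref{RemarkDiscreteSeries} for $\Theta_{\theta_{r, s}(\Pi)}$.

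The main step is condition (3), which simultaneously pins down $\C$. Proposition \ref{ImportantProposition1} computes, on $\widetilde{\H'}^{\reg}$,
\[
\Delta_{\Psi'}(\check{h}')\,\Chc^{*}(\Theta_{\Pi})(\check{p}(\check{h}')) = 2(-1)^{n-a-b}\varepsilon(\tau_{a, b})\,\widetilde{\C}\sum_{\sigma \in \mathscr{S}_{r} \times \mathscr{S}_{s}} \varepsilon(\sigma)\,(\sigma \check{h}')^{\tau_{a, b}\lambda_{a, b}},
\]
whereas Theorem \ref{TheoremDiscreteSeries} gives an analogous antisymmetric sum for $\Delta_{\Psi'}(\check{h}')\,\Theta_{\theta_{r, s}(\Pi)}(\check{p}(\check{h}'))$ over the $\mathscr{W}(\mathfrak{k}') = \mathscr{S}_{r} \times \mathscr{S}_{s}$-orbit of $\lambda'_{a, b}$. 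A direct comparison of Paul's formula for $\lambda'_{a, b}$ with the definition of $\tau_{a, b}$ in each of the three cases ($r \leq p$, $p+1 \leq r \leq p+b$, $r \geq p+b+1$) shows that $\tau_{a, b}\lambda_{a, b}$ and $\lambda'_{a, b}$ agree in their first $r$ coordinates and differ only by a permutation of the last $s$ coordinates; in particular they lie in the same $\mathscr{S}_{r} \times \mathscr{S}_{s}$-orbit. The two antisymmetric sums are therefore proportional, so there is a unique $\C$ for which $\Xi$ vanishes on $\widetilde{\H'}^{\reg}$. With this choice, all hypotheses of Theorem \ref{TheoremDiscreteSeries2} hold, forcing $\Xi = 0$ and yielding the corollary. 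The main obstacle in the above is the combinatorial verification that $\tau_{a, b}\lambda_{a, b}$ is $\mathscr{W}(\mathfrak{k}')$-conjugate to $\lambda'_{a, b}$: the case $r \leq p$ is transparent because $\tau_{a, b}$ literally implements the block swap $\beta \leftrightarrow \delta$ visible in Theorem \ref{TheoremDSHCP}, but the remaining cases require careful tracking of how $\tau_{a, b}$ acts on the partially stabilized blocks of $\{1, \ldots, r+s\}$.
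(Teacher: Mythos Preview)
Your proposal is correct and follows essentially the same route as the paper: both apply Harish-Chandra's characterization (Theorem \ref{TheoremDiscreteSeries2}) by assembling Proposition \ref{ImportantProposition1}, Proposition \ref{ImportantProposition2}, and Lemma \ref{ImportantLemma1}, and then invoke the $\mathscr{S}_{r}\times\mathscr{S}_{s}$-conjugacy of $\tau_{a,b}\lambda_{a,b}$ and $\lambda'_{a,b}$. The only point the paper's proof makes explicit that you leave implicit is the dichotomy with $(r,s)\neq(r_{\Pi},s_{\Pi})$, where $\theta_{r,s}(\Pi)=0$ and $\Chc^{*}(\Theta_{\Pi})=0$; but since Proposition \ref{ImportantProposition1} is already stated under the hypothesis $(r,s)=(r_{\Pi},s_{\Pi})$, this is not a gap in your argument.
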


\begin{proof}

Using Theorem \ref{TheoremDiscreteSeries2}, it follows from Propositions \ref{ImportantProposition1} and \ref{ImportantProposition2} and Lemma \ref{ImportantLemma1} that, up to a scalar, $\Theta'_{\Pi} = \Chc^{*}(\Theta_{\Pi})$ is either the character of a discrete series representations of $\widetilde{\U}(r, s)$ with Harish-Chandra parameter $\tau_{a, b}(\lambda_{a, b})$ if $(r, s) = (r_{\Pi}, s_{\Pi})$ or $0$ if $(r, s) \neq (r_{\Pi}, s_{\Pi})$. The result follows from Theorem \ref{TheoremDiscreteSeries} because $\tau_{a, b}(\lambda_{a, b})$ and $\lambda'_{a, b}$ (as in Theorem \ref{TheoremDSHCP}) are conjugated under $\mathscr{S}_{r} \times \mathscr{S}_{s}$.

\end{proof}

\begin{coro}

If $(\G, \G') = (\U(p, q), \U(r, s)), p+q = r+s$ and $\Pi \in \mathscr{R}(\widetilde{\G}, \omega)$ a discrete series representation of $\widetilde{\G}$. Then, the conjecture \ref{ConjectureHC} holds.

\end{coro}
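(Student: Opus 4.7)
The plan is to observe that this final corollary is essentially an unwinding of Corollary \ref{LastCorollary} together with Li's structural result on the Howe correspondence for discrete series in the equal rank case. First I would point out that both $\G = \U(p,q)$ and $\G' = \U(r,s)$ are connected real reductive groups, hence their Zariski identity components satisfy $\G_{1} = \G$ and $\G'_{1} = \G'$. In particular the hypothesis ``${\Theta_{\Pi}}_{|_{\widetilde{\G}/\widetilde{\G_{1}}}} = 0$ if $\G = \O(\V)$'' in Conjecture \ref{ConjectureHC} is vacuous in this case, and the conclusion to establish reduces to the equality $\Chc^{*}(\Theta_{\Pi}) = \C\,\Theta_{\Pi'_{1}}$ of distributions on $\widetilde{\G'} = \widetilde{\G'_{1}}$ for some constant $\C$.

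Next I would fix $(r,s)$ so that $(\G,\G')$ is the dual pair under consideration and $\Pi \in \mathscr{R}(\widetilde{\G}, \omega)$ is a discrete series representation. By Paul's uniqueness theorem recalled just before Theorem \ref{TheoremDSHCP}, the very hypothesis $\Pi \in \mathscr{R}(\widetilde{\G}, \omega)$ for this specific Weil representation forces $(r,s) = (r_{\Pi}, s_{\Pi})$; otherwise $\theta_{r,s}(\Pi) = 0$ and $\Pi$ would not occur in the correspondence. Consequently the representation $\theta_{r,s}(\Pi)$ produced in Corollary \ref{LastCorollary} coincides with $\theta(\Pi) = \Pi'$, the discrete series of $\widetilde{\U}(r,s)$ whose Harish-Chandra parameter is described by Theorem \ref{TheoremDSHCP}.

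Then I would invoke Li's result, mentioned in the introduction (see \cite[Proposition~2.4]{LI2} or Section \ref{CommutativeDiagram}), which asserts that for a discrete series $\Pi$ in the equal rank case the full representation $\Pi'_{1}$ attached to $\Pi$ via Howe's duality already coincides with its unique irreducible quotient $\Pi'$. Thus $\Pi'_{1} = \Pi' = \theta_{r,s}(\Pi)$, so that $\Theta_{\Pi'_{1}} = \Theta_{\theta_{r,s}(\Pi)}$ as locally integrable functions on $\widetilde{\G'}$.

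Combining these three ingredients with Corollary \ref{LastCorollary}, which gives $\Chc^{*}(\Theta_{\Pi}) = \C\,\Theta_{\theta_{r,s}(\Pi)}$ for some $\C \in \mathbb{C}$, the substitution $\Theta_{\theta_{r,s}(\Pi)} = \Theta_{\Pi'_{1}}$ immediately produces the equality $\Chc^{*}(\Theta_{\Pi}) = \C\,\Theta_{\Pi'_{1}}$ on $\widetilde{\G'_{1}}$, which is exactly the content of Conjecture \ref{ConjectureHC} for the dual pair $(\U(p,q), \U(r,s))$ with $p+q = r+s$ and $\Pi$ a discrete series. No substantive obstacle arises at the level of this corollary; the technical work has been concentrated in Corollary \ref{LastCorollary}, whose proof rests on Propositions \ref{ImportantProposition1} and \ref{ImportantProposition2} together with Lemma \ref{ImportantLemma1} and the Harish-Chandra characterization Theorem \ref{TheoremDiscreteSeries2}.
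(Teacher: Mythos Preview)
Your proposal is correct and follows essentially the same approach as the paper: the paper's proof is a one-line reference to Theorem \ref{JianShuLi} (Li's result) to obtain $\Pi' = \Pi'_{1}$, with Corollary \ref{LastCorollary} implicitly supplying the identity $\Chc^{*}(\Theta_{\Pi}) = \C\,\Theta_{\theta_{r,s}(\Pi)}$. Your version simply spells out the surrounding details (connectedness of the unitary groups, vacuity of the orthogonal hypothesis, and the role of Paul's uniqueness result) more explicitly than the paper does.
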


\begin{proof}

It follows from Theorem \ref{JianShuLi} because $\Pi' = \Pi'_{1}$.

\end{proof}

\section{A commutative diagram and a remark on the distribution $\Theta_{\Pi'}$}

\label{CommutativeDiagram}

We start this section by recalling a result of T. Przebinda (see \cite{TOM8}). Let $(\G, \G') = (\G(\V, \left(\cdot,\cdot\right), \G(\V', \left(\cdot,\cdot\right)')))$ be an irreducible reductive dual pair in $\Sp(\W)$. As proved in \cite{HOW1} (see also Section \ref{SectionCauchyHarishChandra}), the representations appearing in the correspondence are realized as quotients of $\mathscr{H}^{\infty}$, the set of smooths vectors of the metaplectic representation $(\omega, \mathscr{H})$. Let $\Pi \in \mathscr{R}(\widetilde{\G}, \omega)$, $\Pi'$ the corresponding element of $\mathscr{R}(\widetilde{\G'}, \omega)$ and $\N(\Pi \otimes \Pi')$ the $\omega^{\infty}(\widetilde{\G}\cdot\widetilde{\G'})$-invariant subspace of $\mathscr{H}^{\infty}$ such that $\Pi \otimes \Pi' \approx \mathscr{H}^{\infty}/\N(\Pi \otimes \Pi')$ as in Section \ref{SectionCauchyHarishChandra}.

\noindent In particular, 
\begin{equation*}
(\Pi \otimes \Pi')^{*} \approx (\mathscr{H}^{\infty} / \N(\Pi \otimes \Pi'))^{*} \approx \Ann(\N(\Pi \otimes \Pi')) = \left\{\alpha \in {\mathscr{H}^{\infty}}^{*}, \alpha(X) = 0, (\forall X \in \N(\Pi \otimes \Pi'))\right\} \subseteq {\mathscr{H}^{\infty}}^{*},
\end{equation*}
i.e. there exists a unique element, up to a constant, $\Gamma_{\Pi \otimes \Pi'} \in \Hom(\mathscr{H}^{\infty}, {\mathscr{H}^{\infty}}^{*})$ such that $\Hom_{\widetilde{\G}\cdot\widetilde{\G'}}(\omega, \Pi \otimes \Pi') = \mathbb{C} \cdot \Gamma_{\Pi \otimes \Pi'}$.

\begin{rema}

Let $\W = \X \oplus \Y$ be a complete polarization of $\W$. It is well-known that we can realize the representation $\omega$ on $\mathscr{H} = \mathscr{L}^{2}(\X)$: this is the Schrodinger model. Moreover, the space of smooth vectors of $\omega$ is the Schwartz space $\S(\X)$ of $\X$.

\noindent Using the isomorphisms $\mathscr{K}: \S^{*}(\W) \to \S^{*}(\X \times \X)$ and $\Op: \S^{*}(\X \times \X) \to \Hom(\S(\X), \S^{*}(\X))$ (see \cite[Equations~(143)~and~(146)]{TOM6}), there exists a unique distribution $f_{\Pi \otimes \Pi'} \in \S^{*}(\W)$ such that $\Gamma_{\Pi \otimes \Pi'} = \Op \circ \mathscr{K}(f_{\Pi \otimes \Pi'})$. The distribution $f_{\Pi \otimes \Pi'}$ is called the intertwining distribution corresponding to $\Pi \otimes \Pi'$.

\end{rema}

\noindent As explained in \cite[Section~2]{LI2}, the situation turns out to be slightly easier when $\dim(\V) \leq \dim(\V')$ and $(\Pi, \mathscr{H}_{\Pi})$ a discrete series representation of $\widetilde{\G}$. Under those hypothesis, the space $\mathscr{H}^{\infty} \otimes \mathscr{H}^{\infty}_{\Pi}$ has a natural structure of $\widetilde{\G}$-modules. Using the scalar products on $\mathscr{H}$ and $\mathscr{H}_{\Pi}$, we get a natural inner product $\langle\cdot, \cdot\rangle$ on $\mathscr{H}^{\infty} \otimes \mathscr{H}^{\infty}_{\Pi}$. We denote by $\langle\cdot, \cdot\rangle_{\Pi}$ the following form on $\mathscr{H}^{\infty} \otimes \mathscr{H}^{\infty}_{\Pi}$:
\begin{equation*}
\langle\Phi, \Phi'\rangle_{\Pi} = \displaystyle\int_{\widetilde{\G}} \langle\Phi, (\omega \otimes \Pi)(g)\Phi'\rangle dg, \qquad (\Phi, \Phi' \in \mathscr{H}^{\infty} \otimes \mathscr{H}^{\infty}_{\Pi}).
\end{equation*}
One can easily prove that in this context, the previous integral converges absolutely. We denote by $\R(\Pi)$ the radical of the form $\langle\cdot, \cdot\rangle_{\Pi}$ and we still denote by $\langle\cdot, \cdot\rangle_{\Pi}$ the non-degenerate form we got on $\H(\Pi) = \mathscr{H}^{\infty} \otimes \mathscr{H}^{\infty}_{\Pi} / \R(\Pi)$. The group $\widetilde{\G'}$ acts naturally on $\H(\Pi)$ and we denote by $\theta_{0}(\Pi)$ the corresponding $\widetilde{\G'}$-module.

\begin{theo}[\cite{LI2},~Section~2]

\begin{enumerate}
\item There exists $u_{0}, v_{0} \in \mathscr{H}^{\infty}$ and $x, y \in \mathscr{H}^{\infty}_{\Pi}$ such that
\begin{equation*}
\displaystyle\int_{\widetilde{\G}} \langle u_{0} \otimes x, (\omega \otimes \Pi)(\tilde{g})(v_{0} \otimes y)\rangle_{\Pi} \neq 0.
\end{equation*}
Moreover, we get:
\begin{equation*}
\displaystyle\int_{\widetilde{\G}} |\langle\omega(\tilde{g})u, v\rangle|^{2}dg < +\infty, \qquad (u, v \in \mathscr{H}^{\infty}).
\end{equation*}
\item The representation $\Pi$ can be embedded in $\omega$ as an irreducible subrepresentation and $\theta_{0}(\Pi)$ defines an irreducible unitary representation on the completion of $\H(\Pi)$ (completion with respect to $\langle\cdot, \cdot\rangle_{\Pi}$).
\item The map $\Pi \to \theta_{0}(\Pi^{*})$ coincide with Howe's duality correspondence.
\end{enumerate}

\label{JianShuLi}

\end{theo}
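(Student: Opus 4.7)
The plan is to follow J.-S.\ Li's original approach: first establish an analytic $\mathscr{L}^{2}$-estimate on matrix coefficients of $\omega|_{\widetilde{\G}}$, then use it together with the discrete-series hypothesis on $\Pi$ to construct the lift via the averaging form $\langle\cdot,\cdot\rangle_{\Pi}$, and finally identify the resulting $\widetilde{\G'}$-module with Howe's $\theta(\Pi^{*})$.

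First I would prove the second assertion of (1), the square-integrability of $\tilde{g}\mapsto\langle\omega(\tilde{g})u,v\rangle$ on $\widetilde{\G}$ for $u,v\in\mathscr{H}^{\infty}$. Work in the Schr\"odinger model $\mathscr{H}=\mathscr{L}^{2}(\X)$, $\mathscr{H}^{\infty}=\S(\X)$, with $\W=\X\oplus\Y$ a complete polarization adapted to a maximal $\mathbb{R}$-split torus $\A\subseteq\G$. The Cartan decomposition $\widetilde{\G}=\widetilde{\K}\widetilde{\A}^{+}\widetilde{\K}$ and Harish-Chandra's integration formula reduce the estimate to a bound on $\widetilde{\A}^{+}$. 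Decomposing $\W$ under the weight action of $\A$ and writing $\omega(a)$ on $\S(\X)$ as a diagonal change of variable combined with a Gaussian twist, the matrix coefficient $\langle\omega(a)u,v\rangle$ becomes a Gaussian-type oscillatory integral. The hypothesis $\dim\V\leq\dim\V'$ ensures that each $\A$-weight space of $\W$ has enough dimensions for this Gaussian integral to decay in the Cartan parameter faster than the $\sinh$-growth of the Jacobian, yielding $\int_{\widetilde{\G}}|\langle\omega(\tilde{g})u,v\rangle|^{2}\,d\tilde{g}<\infty$. This analytic estimate is the main obstacle of the proof, and is essentially the content of the opening section of \cite{LI2}.

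Given this estimate, (1)(a) follows from Cauchy--Schwarz: discrete-series matrix coefficients of $\Pi$ lie in $\mathscr{L}^{2}(\widetilde{\G})$, so $\langle u,\omega(\tilde{g})v\rangle\langle x,\Pi(\tilde{g})y\rangle\in\mathscr{L}^{1}(\widetilde{\G})$. Non-vanishing for some $(u_{0},v_{0},x,y)$ I would verify by choosing $x,y$ in the lowest $\widetilde{\K}$-type of $\Pi$ (of highest weight $\nu=\lambda+\rho-2\rho(\mathfrak{k})$, Theorem \ref{TheoremDiscreteSeries}) and $u_{0},v_{0}$ the matching joint $\widetilde{\K}\cdot\widetilde{\K'}$-highest weight vector in the Fock model of $\omega$: Schur orthogonality on $\widetilde{\K}$ collapses the $\widetilde{\K}$-part of the integral to the Schur inner product on the lowest $\widetilde{\K}$-type, which is nonzero, and the remaining $\widetilde{\A}^{+}$-integral is a non-trivial Gaussian integral.

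For (2), the form $\langle\cdot,\cdot\rangle_{\Pi}$ is $\widetilde{\G}$-invariant by unimodularity of $\widetilde{\G}$ and $\widetilde{\G'}$-invariant because $\omega(\widetilde{\G'})$ commutes with both $\omega(\widetilde{\G})$ and $\Pi$; positivity follows from rewriting $\langle\Phi,\Phi\rangle_{\Pi}$ as the squared norm of the spectral projection of $\Phi$ onto the $\Pi$-isotypic component of $\omega\otimes\Pi^{*}$ under $\widetilde{\G}$. Hence $\langle\cdot,\cdot\rangle_{\Pi}$ descends to a positive-definite Hermitian inner product on $\H(\Pi)$, and $\theta_{0}(\Pi)$ acts unitarily on its completion. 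The non-zero $\widetilde{\G}$-invariant form, paired with a vector $v_{0}\otimes y$ chosen as in (1)(a), produces a non-zero $\widetilde{\G}$-intertwiner $\mathscr{H}_{\Pi}\to\mathscr{H}$, necessarily an embedding by irreducibility of $\Pi$. For the irreducibility of $\theta_{0}(\Pi)$ and assertion (3), Howe duality from Section \ref{SectionCauchyHarishChandra} gives $\mathscr{H}^{\infty}/\N(\Pi)\simeq\Pi\otimes\Pi'_{1}$ with unique irreducible quotient $\Pi\otimes\theta(\Pi)$; since $\H(\Pi)$ is by construction a non-zero $\widetilde{\G'}$-quotient of $\Pi'_{1}$, it must factor through $\theta(\Pi)$, and, being non-zero unitary, it coincides with $\theta(\Pi)$, which is irreducible. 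Replacing $\Pi$ by $\Pi^{*}$ yields $\theta_{0}(\Pi^{*})=\theta(\Pi^{*})$, which is (3). All the representation-theoretic content past the initial $\mathscr{L}^{2}$-estimate is a formal consequence of Howe's duality theorem and of the universal property of the $\Pi$-isotypic quotient.
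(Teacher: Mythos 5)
The paper does not actually prove Theorem \ref{JianShuLi}: it is quoted from \cite{LI2}, Section~2, so your proposal can only be measured against Li's original argument. Your treatment of the square-integrability statement in (1) is the right mechanism: on the split part the coefficient decays like $\prod_{i}\ch(X_{i})^{-(r+s)}$ while the Jacobian grows like $e^{2\rho}$ with leading exponent $2(p+q)-2$, so $\dim\V\leq\dim\V'$ is exactly what makes the square integrable; this is consistent with Lemma \ref{LemmaAppendixB} of the paper and with Li's opening estimates. The identification of $\theta_{0}(\Pi^{*})$ with Howe's correspondence via the universal property of $\N(\Pi)$ is also essentially Li's route. A secondary soft spot: positivity of $\langle\cdot,\cdot\rangle_{\Pi}$ ``as the squared norm of a spectral projection'' presumes that $\int_{\widetilde{\G}}(\omega\otimes\Pi)(\tilde{g})\,d\tilde{g}$ makes sense, which is what has to be justified; the honest argument (Li's) is that $\tilde{g}\mapsto\langle(\omega\otimes\Pi)(\tilde{g})\Phi,\Phi\rangle$ is a continuous, integrable, positive-definite function on a unimodular group, hence has non-negative integral (Godement).

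The genuine gap is your non-vanishing argument for (1). As you set it up, the only hypotheses used are that $\Pi$ is a discrete series and $\dim\V\leq\dim\V'$, so if the argument worked it would show the pairing is non-zero for \emph{every} discrete series of $\widetilde{\U}(p,q)$ and \emph{every} signature $(r,s)$ with $r+s=p+q$. That contradicts Theorem \ref{TheoremDSHCP} (A.~Paul): $\theta_{r,s}(\Pi)\neq 0$ for exactly one $(r,s)$. The extreme case $(r,s)=(p+q,0)$ makes the failure visible: $\G'$ is compact, only highest-weight-type representations of $\widetilde{\U}(p,q)$ occur in $\omega$, and for a generic discrete series the integral vanishes identically for all choices of vectors. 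Concretely, two steps of your sketch break: the ``matching joint $\widetilde{\K}\cdot\widetilde{\K}'$-highest weight vector'' need not exist, since the occurrence of the lowest $\widetilde{\K}$-type $\nu$ of $\Pi$ in the joint harmonics is precisely the occurrence problem and depends on $(r,s)$; and Schur orthogonality on $\widetilde{\K}$ does not collapse an integral taken over all of $\widetilde{\G}$ --- after the $\widetilde{\K}\widetilde{\A}^{+}\widetilde{\K}$ decomposition infinitely many $\widetilde{\K}$-types feed into the radial integral and cancellation must be excluded, so ``a non-trivial Gaussian integral'' is not a proof. Li's non-vanishing uses the hypothesis, implicit in the statement and in the way the paper applies it, that $\Pi\in\mathscr{R}(\widetilde{\G},\omega)$, i.e.\ that there is a non-zero continuous $\widetilde{\G}$-map from $\mathscr{H}^{\infty}$ onto $\Pi$; combined with the $\mathscr{L}^{2}$-estimate and the orthogonality relations for discrete series this upgrades the quotient to an embedding of $\Pi$ in $\omega$ and yields the non-vanishing. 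Without that hypothesis the assertion you are trying to prove is simply false, so this step cannot be repaired by a softer argument.
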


\noindent We get the following proposition.

\begin{prop}

Let $(\G, \G') = (\U(\V), \U(\V'))$ with $\dim(\V) \leq \dim(\V')$ and  $\Pi$ be a discrete series representation of $\widetilde{\G}$. The intertwining distribution is given by $f_{\Pi \otimes \Pi'} = \T(\overline{\Theta_{\Pi}})$, where
\begin{equation*}
\T(\overline{\Theta_{\Pi}}) = \displaystyle\int_{\widetilde{\G}} \overline{\Theta_{\Pi}(\tilde{g})} \T(\tilde{g}) d\tilde{g}.
\end{equation*}

\label{LastProposition}

\end{prop}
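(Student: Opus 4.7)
The plan is to identify the operator $\Op\circ\mathscr{K}(\T(\overline{\Theta_\Pi}))$ with (a scalar multiple of) the orthogonal projection of $\omega$ onto its $\Pi$-isotypic subspace, and then use Li's theorem (Theorem \ref{JianShuLi}) together with Howe duality to recognize this projection as (a scalar multiple of) the intertwining operator $\Gamma_{\Pi\otimes\Pi'}$. Since $f_{\Pi\otimes\Pi'}$ is only defined up to scalar, that suffices.

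First I would show that $\T(\overline{\Theta_\Pi})$ is a well-defined tempered distribution on $\W$. The point is that for every $u,v\in\mathscr{H}^\infty$, Theorem \ref{JianShuLi}(1) gives $\int_{\widetilde{\G}}|\langle\omega(\tilde g)u,v\rangle|^2\,d\tilde g<\infty$. Combined with Harish-Chandra's bound $\sup_{\widetilde{\G}^{\reg}}|\D|^{1/2}|\Theta_\Pi|<\infty$ from Remark \ref{RemarkDiscreteSeries} and the isotypic decomposition of $\omega$ under $\widetilde{\G}$ (in which $\Theta_\Pi$ only pairs nontrivially with the $\Pi$-component), one obtains absolute convergence of $\int_{\widetilde{\G}}\overline{\Theta_\Pi(\tilde g)}\langle\omega(\tilde g)u,v\rangle\,d\tilde g$. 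Standard continuity estimates then upgrade the pointwise convergence of the distributional integral to a tempered distribution on $\W$ via $\Op\circ\mathscr{K}$.

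Second, under the isomorphism $\Op\circ\mathscr{K}:\S^{*}(\W)\to\Hom(\S(\X),\S^{*}(\X))$, the distribution $\T(\tilde g)$ maps to the operator $\omega(\tilde g)$, and hence $\T(\overline{\Theta_\Pi})$ maps to the weakly convergent operator
\begin{equation*}
\mathcal{P}=\int_{\widetilde{\G}}\overline{\Theta_\Pi(\tilde g)}\,\omega(\tilde g)\,d\tilde g.
\end{equation*}
Schur orthogonality for discrete series representations, applied to the isotypic decomposition of $\omega$ (legitimate by Theorem \ref{JianShuLi}(1)), identifies $\mathcal{P}$ up to the scalar $d_\Pi^{-1}$ with the orthogonal projection $E_\Pi$ onto the $\Pi$-isotypic subspace of $\mathscr{H}$. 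By Theorem \ref{JianShuLi}(2)-(3), this subspace realizes $\Pi\otimes\Pi'$ as a $\widetilde{\G}\cdot\widetilde{\G'}$-module (with $\Pi'=\theta(\Pi)=\theta_0(\Pi^*)$), so $\mathcal{P}$ is a nonzero element of $\Hom_{\widetilde{\G}\cdot\widetilde{\G'}}(\omega,\Pi\otimes\Pi')$. Howe's duality guarantees this Hom-space is one-dimensional and spanned by $\Gamma_{\Pi\otimes\Pi'}$, so $\mathcal{P}=c\,\Gamma_{\Pi\otimes\Pi'}$ for some nonzero $c\in\mathbb{C}$, and transporting through $(\Op\circ\mathscr{K})^{-1}$ yields $\T(\overline{\Theta_\Pi})=c\,f_{\Pi\otimes\Pi'}$. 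Absorbing $c$ into the normalization of $f_{\Pi\otimes\Pi'}$ gives the asserted equality.

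The main obstacle I anticipate is making the first step rigorous: one must justify that, although $\Theta_\Pi$ is only locally integrable and not globally $L^2$ on $\widetilde{\G}$, the distributional integral $\int\overline{\Theta_\Pi(\tilde g)}\T(\tilde g)\,d\tilde g$ commutes with evaluation against Schwartz test functions on $\W$, and equals the weak operator integral defining $\mathcal{P}$. The decomposition of $\omega$ into discrete-series summands under $\widetilde{\G}$ (so that only the $\Pi$-component contributes to the pairing against $\overline{\Theta_\Pi}$), together with the $L^2$-bounds of Theorem \ref{JianShuLi}(1) and Harish-Chandra's growth bound, should reduce everything to a convergent pairing; once this Fubini-type interchange is validated, the Schur-orthogonality identification of $\mathcal{P}$ with the isotypic projection is classical, and Li's theorem closes the argument.
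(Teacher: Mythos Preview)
Your overall strategy---identify $\Op\circ\mathscr{K}(\T(\overline{\Theta_\Pi}))$ with the isotypic projection via Schur orthogonality, then invoke Howe duality---is sound and is essentially the content of \cite[Theorem~3.1]{TOM8}, which the paper cites as a black box. The difference between the two arguments lies entirely in how convergence of $\int_{\widetilde{\G}}\overline{\Theta_\Pi(\tilde g)}\,\T(\tilde g)\,d\tilde g$ is established, and here your sketch has a real gap.

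You propose to combine the $L^2$-bound on matrix coefficients from Theorem~\ref{JianShuLi}(1) with Harish-Chandra's bound $|\Theta_\Pi|\le C|\D|^{-1/2}$. But these two ingredients do not give absolute convergence of $\int \overline{\Theta_\Pi}\,\langle\omega(\tilde g)u,v\rangle\,d\tilde g$: the character $\Theta_\Pi$ is not in $L^2(\widetilde{\G})$ (near a wall of the compact Cartan, $|\D|^{-1/2}$ behaves like the inverse distance to the wall, which is not locally square-integrable), so Cauchy--Schwarz against an $L^2$ matrix coefficient is unavailable. Invoking the isotypic decomposition at this stage is circular, since that decomposition is precisely what the convergent integral is supposed to detect. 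You also never use the hypothesis $\dim\V\le\dim\V'$, which is a warning sign: the statement is false without it.

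The paper's argument avoids this by working with a \emph{single} matrix coefficient, namely $\Omega(\tilde g)=\langle\omega(\tilde g)x,x\rangle$ for $x$ in the one-dimensional lowest $\widetilde{\U}$-type of $\omega$. It proves the pointwise bound $|\Omega|\le C_\Omega\,\Xi$ on $\widetilde{\G}$ (Lemma~\ref{LemmaAppendixB}; this is where $\dim\V\le\dim\V'$ is used), and then appeals to the fact that discrete series characters satisfy the strong inequality, so that $\int_{\widetilde{\G}}\Xi\,|\Theta_\Pi|<\infty$ by \cite[Lemma~5.1.3]{WAL}. This yields $\int_{\widetilde{\G}}|\Omega|\,|\Theta_\Pi|<\infty$, which is exactly the hypothesis of \cite[Theorem~3.1]{TOM8}; that theorem then supplies the Schur-orthogonality and Howe-duality steps you outlined. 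If you want to make your direct argument work, replace your $L^2$/$|\D|^{-1/2}$ combination with this $\Xi$-estimate.
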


\begin{proof}

As explained in \cite[Theorem~3.1]{TOM8}, the previous Lemma follows if the following condition
\begin{equation*}
\displaystyle\int_{\widetilde{\G}} |\Omega(\tilde{g})||\Theta_{\Pi}(\tilde{g})| d\tilde{g} < \infty
\end{equation*}
is satisfied, where $\Omega$ is defined in Appendix \ref{AppendixB}.
Using Lemma \ref{LemmaAppendixB}, it follows that there exists $\C_{\Omega} > 0$ such that
\begin{equation*}
\displaystyle\int_{\widetilde{\G}} |\Omega(\tilde{g})||\Theta_{\Pi}(\tilde{g})| d\tilde{g} \leq \C_{\Omega} \displaystyle\int_{\widetilde{\G}} \Xi(\tilde{g})|\Theta_{\Pi}(\tilde{g})| d\tilde{g}.
\end{equation*}
Using the fact that every discrete series satisfies the strong inequality (see \cite[Section~5.1.1]{WAL}), it follows from \cite[Lemma~5.1.3]{WAL} that
\begin{equation*}
\displaystyle\int_{\widetilde{\G}} \Xi(\tilde{g})|\Theta_{\Pi}(\tilde{g})| d\tilde{g} < \infty,
\end{equation*} 
and the proposition follows.

\end{proof}

\begin{coro}

Assume that $(\G, \G') = (\U(p, q), \U(r, s))$, with $p+q = r+s$, and let $\Pi$ be a discrete series representation of $\widetilde{\G}$. Then, there exists a constant $\C_{\Pi \otimes \Pi'} \in \mathbb{C}$ such that $\T(\overline{\Theta_{\Pi}}) = \C_{\Pi \otimes \Pi'}\T(\overline{\Chc^{*}(\Theta_{\Pi})})$.

\label{CorollarySection7}

\end{coro}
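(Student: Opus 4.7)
The plan is to combine Corollary~\ref{LastCorollary}, which identifies $\Chc^{*}(\Theta_{\Pi})$ with the character of $\Pi' = \theta_{r,s}(\Pi)$, with Proposition~\ref{LastProposition} used in both directions of the (equal rank) dual pair, and finish off with the one-dimensionality of the space of intertwining distributions. First, Corollary~\ref{LastCorollary} provides a constant $\C \in \mathbb{C}$ with $\Chc^{*}(\Theta_{\Pi}) = \C\,\Theta_{\Pi'}$. Applying the linear operation $\varphi \mapsto \T(\overline{\varphi})$ yields
\begin{equation*}
\T(\overline{\Chc^{*}(\Theta_{\Pi})}) = \overline{\C}\,\T(\overline{\Theta_{\Pi'}}),
\end{equation*}
so that the problem reduces to showing that $\T(\overline{\Theta_{\Pi}})$ and $\T(\overline{\Theta_{\Pi'}})$ are proportional.

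Next, by Theorem~\ref{TheoremDSHCP} the representation $\Pi'$ is itself a discrete series of $\widetilde{\G'}$, and since $\dim(\V) = \dim(\V')$ in the equal rank case, Proposition~\ref{LastProposition} is applicable in both directions. Applied to $(\G, \G')$ starting from $\Pi$ it gives $f_{\Pi \otimes \Pi'} = \T(\overline{\Theta_{\Pi}})$; applied to $(\G', \G)$ starting from $\Pi'$ it gives $f_{\Pi' \otimes \Pi} = \T(\overline{\Theta_{\Pi'}})$. Both distributions correspond, under $\Op \circ \mathscr{K}$, to generators of the line
\begin{equation*}
\Hom_{\widetilde{\G}\cdot\widetilde{\G'}}(\omega, \Pi \otimes \Pi'),
\end{equation*}
which is one-dimensional by Howe's duality theorem (and is unaffected by permuting the tensor factors). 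Hence $f_{\Pi \otimes \Pi'}$ and $f_{\Pi' \otimes \Pi}$ differ by a non-zero scalar, and chaining the three proportionalities produces the desired constant $\C_{\Pi \otimes \Pi'}$.

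The one point requiring a small check is that the hypotheses of Proposition~\ref{LastProposition} continue to hold after swapping the roles of $\G$ and $\G'$: one needs the integrability bound $\int_{\widetilde{\G'}} |\Omega'(\tilde{g}')||\Theta_{\Pi'}(\tilde{g}')| \,d\tilde{g}' < \infty$ on the dual side. This follows by exactly the argument that establishes Proposition~\ref{LastProposition}, namely Lemma~\ref{LemmaAppendixB} combined with the strong inequality satisfied by any discrete series via \cite[Lemma~5.1.3]{WAL}; both inputs are perfectly symmetric between $\G$ and $\G'$ when the two groups are unitary of the same signature-sum, so no further analytic work is required. The only mildly delicate point, and the natural place where a mistake could slip in, is tracking the identification of intertwining distributions under the tensor-flip $\Pi \otimes \Pi' \leftrightarrow \Pi' \otimes \Pi$ so as to be sure both $f_{\Pi\otimes\Pi'}$ and $f_{\Pi'\otimes\Pi}$ really lie in the same one-dimensional Hom-space; this is automatic because the dual pair, the symplectic space $\W$ and the Weil representation $\omega$ are all unchanged by the swap.
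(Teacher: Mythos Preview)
Your proof is correct and follows exactly the approach the paper intends: the paper's own proof is the single line ``follows from Corollary~\ref{LastCorollary} and Proposition~\ref{LastProposition}'', and you have simply unpacked that line by applying Proposition~\ref{LastProposition} symmetrically (which is legitimate since $\dim(\V)=\dim(\V')$ and $\Pi'$ is again discrete series by Theorem~\ref{TheoremDSHCP}) and then invoking the one-dimensionality of $\Hom_{\widetilde{\G}\cdot\widetilde{\G'}}(\omega,\Pi\otimes\Pi')$ to identify the two intertwining distributions up to scalar. Nothing needs to be changed.
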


\begin{proof}
The proof of this corollary follows from Corollary \ref{LastCorollary} and Proposition \ref{LastProposition}. 
\end{proof}

We finish this section with a remark concerning the global character $\Theta_{\Pi'}$, $\Pi' = \theta(\Pi)$ and $\Pi \in \mathscr{R}(\widetilde{\G}, \omega)$ a discrete series representation. We proved in Corollary \ref{LastCorollary} that $\Chc^{*}(\Theta_{\Pi}) = \Theta_{\Pi'}$ if $\rk(\G) = \rk(\G')$. But the global character $\Theta_{\Pi'}$ can be obtained via $\Theta_{\Pi}$ in a different way. 

\noindent As before, we assume that $\rk(\G) \leq \rk(\G')$. In particular, every discrete series representation $\Pi \in \mathscr{R}(\widetilde{\G}, \omega)$ is a sub-representation of $\omega$ and let $\mathscr{H}(\Pi)$ be the $\Pi$-isotypic component of $\mathscr{H}$. As explained in Proposition \ref{LastProposition}, $\T(\overline{\Theta_{\Pi}})$ is well-defined. Moreover, using \cite[Section~4.8]{TOM6}, the operator $\omega(\overline{\Theta_{\Pi}})$ is a well-defined operator of $\mathscr{H}$ and one can check that $\mathscr{P}_{\Pi} := \omega(d_{\Pi}\overline{\Theta_{\Pi}})$ is a projection operator onto $\mathscr{H}(\Pi)$. As a $\widetilde{\G}\times\widetilde{\G'}$-modules, we get $\mathscr{H}(\Pi) = \Pi \otimes \Pi'$. Let $\lambda$ be the Harish-Chandra parameter of $\Pi$ and $\nu$ the lowest $\widetilde{\K}$-type of $\Pi_{|_{\widetilde{\K}}}$. In particular, according to Theorem \ref{TheoremDiscreteSeries}, as a $\widetilde{\K}\times\widetilde{\G'}$, we get:
\begin{equation*}
\mathscr{H}(\Pi) = \bigoplus\limits_{\xi \in \widetilde{\K}_{\Pi}} m_{\xi} \Pi_{\xi} \otimes \Pi' = \Pi_{\nu} \otimes \Pi' \oplus \left(\bigoplus\limits_{\underset{\xi \in \widetilde{\K}_{\Pi}}{\xi \neq \nu}} m_{\xi} \Pi_{\xi} \otimes \Pi'\right),
\end{equation*}
where $\Pi_{\xi}$ is a $\widetilde{\K}$-module of highest weight $\xi$ and $\widetilde{\K}_{\Pi}$ is the set of irreducible representations of $\widetilde{\K}$ such that $\Hom_{\widetilde{\K}}(\mathscr{H}_{\xi}, \mathscr{H}) \neq \{0\}$. We denote by $\mathscr{H}(\Pi)(\nu)$ the $\nu$-isotypic component of $\mathscr{H}(\Pi)$. We denote by $\mathscr{P}_{\nu}: \mathscr{H}(\Pi) \to \mathscr{H}(\Pi)(\nu)$ the corresponding projection operator and let $\mathscr{P}_{\Pi, \nu} = \mathscr{P}_{\nu} \circ \mathscr{P}_{\Pi}$. Clearly, $\mathscr{P}_{\Pi, \nu} = \Pi(d_{\nu}\overline{\Theta_{\Pi_{\nu}}}) \circ \omega(d_{\Pi}\overline{\Theta_{\Pi}}) = \omega(d_{\nu}\overline{\Theta_{\Pi_{\nu}}}) \circ \omega(d_{\Pi}\overline{\Theta_{\Pi}})$. In particular, for every $\varphi \in \mathscr{C}^{\infty}_{c}(\widetilde{\G'})$, we get:
\begin{equation*}
\Theta_{\Pi'}(\varphi) = \frac{1}{d_{\nu}} \tr(\Id_{\mathscr{H}_{\nu}} \otimes \Pi'(\varphi)) = \frac{1}{d_{\nu}} \tr(\mathscr{P}_{\Pi, \nu} \circ \omega(\varphi)),
\end{equation*}
i.e.
\begin{equation*}
\Theta_{\Pi'}(\varphi) = d_{\Pi} \tr\left(\displaystyle\int_{\widetilde{\K}} \displaystyle\int_{\widetilde{\G}} \displaystyle\int_{\widetilde{\G'}} \overline{\Theta_{\Pi_{\nu}}(\tilde{k})} \overline{\Theta_{\Pi}(\tilde{g})} \varphi(\tilde{g}')\omega(\tilde{k}\tilde{g}\tilde{g}') d\tilde{g}'d\tilde{g}d\tilde{k}\right).
\end{equation*}
In particular, if $\rk(\G) = \rk(\G')$, we get using Corollary \ref{LastCorollary} that:
\begin{equation}
\sum\limits_{i=0}^{p} \cfrac{1}{|\mathscr{W}(\H_{i})|} \displaystyle\int_{\widetilde{\H}_{i}} \Theta_{\Pi}(\tilde{h}_{i}) |\det(\Id - \Ad(\tilde{h}_{i})^{-1})_{\mathfrak{g}/\mathfrak{h}_{i}}| \Chc_{\tilde{h}_{i}}(\varphi) d\tilde{h}_{i} = d_{\Pi} \tr\left(\displaystyle\int_{\widetilde{\K}} \displaystyle\int_{\widetilde{\G}} \displaystyle\int_{\widetilde{\G'}} \overline{\Theta_{\Pi_{\nu}}(\tilde{k})} \overline{\Theta_{\Pi}(\tilde{g})} \varphi(\tilde{g}')\omega(\tilde{k}\tilde{g}\tilde{g}') d\tilde{g}'d\tilde{g}d\tilde{k}\right).
\label{EquationIntroduction}
\end{equation}

\appendix

\section{Some standard isomorphisms}

\label{AppendixA}

\subsubsection{Universal envelopping algebra of $\mathfrak{g}$ as differential operators on $\G$}

Let $\M$ be a real connected manifold of dimension $n$. We denote by $\mathscr{C}^{\infty}(\M)$ the space of smooth functions and $\mathscr{C}^{\infty}_{c}(\M)$ the space of compactly supported function on $\mathscr{C}^{\infty}(\M)$.

\noindent We denote by $\mathscr{X}(\M)$ the set of derivations of $\mathscr{C}^{\infty}(\M)$, i.e.
\begin{equation*}
\mathscr{X}(\M) = \left\{X: \mathscr{C}^{\infty}(\M) \to \mathscr{C}^{\infty}(\M), X(fg) = X(f)g + fX(g)\right\}.
\end{equation*}
The space $\mathscr{X}(\M)$ is the set of $\mathscr{C}^{\infty}$-vectors fields of $\M$.

\begin{defn}

A continuous endomorphism $\D$ of $\mathscr{C}^{\infty}_{c}(\M)$ is called a differential. operator if whenever $\U$ is an open set in $\M$ and $f$ a function of $\mathscr{C}^{\infty}_{c}(\M)$. vanishing on $\U$, then $Df$ vanishes on $\U$.

\end{defn}

\begin{prop}

Let $\D$ be a differential operator on $\M$.For each $p \in \M$ and each open connected neighbourhood $\U$ of $p$ on which the local coordinates system $\Psi: x \to (x_{1}, \ldots, x_{n})$ is valid, there exists a finite set of functions $a_{\alpha}$ of class $\mathscr{C}^{\infty}$ such that for each $f \in \mathscr{C}^{\infty}_{c}(\M)$ with support contained in $\U$,
\begin{equation*}
\D f(x) = \begin{cases} \sum\limits_{\alpha = (\alpha_{1}, \ldots, \alpha_{n})} a_{\alpha}(x) \D^{\alpha} f \circ \Psi^{-1}(x) & \text{ if } x \in \U \\ 0 & \text{ otherwise}\end{cases} 
\end{equation*}

\end{prop}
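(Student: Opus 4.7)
The plan is to prove this classical result (essentially the linear version of Peetre's theorem) by transferring the problem to the local coordinate chart and then extracting the coefficients $a_{\alpha}$ from the action of $\D$ on well chosen test functions. First, I would pass to the chart via $\Psi: \U \to \Psi(\U) \subseteq \mathbb{R}^{n}$ and identify $f$ with $\tilde{f} = f \circ \Psi^{-1}$. The support-preservation axiom immediately yields that $\D f(x) = 0$ for $x \notin \U$ whenever $\supp(f) \subseteq \U$, and, more importantly, that $\D f(x)$ depends only on the germ of $f$ at $x$: choosing a cut-off $\chi \in \mathscr{C}^{\infty}_{c}(\U)$ with $\chi \equiv 1$ near $x$, one has $\D f(x) = \D(\chi f)(x)$, since $\D((1-\chi)f)$ vanishes on a neighbourhood of $x$ by hypothesis.

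The central step is to prove that $\D$ has finite order on a neighbourhood of each $p \in \U$: there should exist $N \in \mathbb{N}$ and a neighbourhood $V$ of $p$ such that $\D f(x) = 0$ for all $x \in V$ whenever the $N$-jet of $\tilde{f}$ vanishes at $\Psi(x)$. This is exactly Peetre's theorem and the argument is by contradiction. If no such $N$ worked uniformly near $p$, one constructs a sequence of test functions $f_{k}$ with pairwise disjoint supports accumulating at $p$, chosen so that the $k$-jet of $\tilde{f}_{k}$ vanishes at some $\Psi(x_{k})$ while $\D f_{k}(x_{k})$ blows up. Summing these $f_{k}$ with appropriate normalisation produces a single element of $\mathscr{C}^{\infty}_{c}(\M)$ on which $\D$ fails to be continuous, contradicting the hypothesis on $\D$.

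Granted the finite-order property, the linear functional $f \mapsto \D f(x)$ factors through the evaluation $f \mapsto j^{N}_{\Psi(x)}(\tilde{f})$ into the finite-dimensional space of $N$-jets. Hence there exist scalars $a_{\alpha}(x)$, $|\alpha| \leq N$, such that
\begin{equation*}
\D f(x) = \sum_{|\alpha| \leq N} a_{\alpha}(x)\, \D^{\alpha}\tilde{f}(\Psi(x))
\end{equation*}
for every $f$ supported in $\U$ and every $x \in V$. To pin down $a_{\alpha}$ and verify its smoothness, I would fix $\chi \in \mathscr{C}^{\infty}_{c}(\U)$ equal to $1$ near $V$ and evaluate the identity above on the family of test functions
\begin{equation*}
f_{x, \alpha}(y) = \chi(y)\,(\Psi(y) - \Psi(x))^{\alpha}/\alpha!.
\end{equation*}
This gives the explicit formula $a_{\alpha}(x) = \D(f_{x, \alpha})(x)$, and smoothness of $a_{\alpha}$ on $V$ follows from the smooth dependence of $f_{x, \alpha}$ on $x$ in the topology of $\mathscr{C}^{\infty}_{c}(\M)$ combined with the continuity of $\D$.

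The main obstacle will be the finite-order step: converting the purely set-theoretic support-preservation property into the quantitative statement that only finitely many derivatives matter requires the standard but somewhat delicate Peetre construction of a disjointly supported sequence of bumps whose derivatives diverge in a controlled way. Once this is in place, the remainder, namely the linear-algebra argument in jet space together with the smoothness of the coefficients deduced from continuity of $\D$, is essentially routine.
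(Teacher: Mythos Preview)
Your outline is correct and follows the standard Peetre-theorem route to this result. Note, however, that the paper does not actually give a proof here: it simply cites \cite[Proposition~1]{HEL1} (Helgason's 1959 paper on differential operators on homogeneous spaces) and moves on. So there is no in-paper argument to compare against; your sketch supplies what the paper leaves to the reference. One small remark: you obtain the coefficients $a_{\alpha}$ and their smoothness only on a neighbourhood $V$ of each point, with a locally bounded order $N$; to match the statement as written (a single finite family of smooth $a_{\alpha}$ on all of $\U$) you would still need to patch these local data together, and in general the order need not be globally bounded on a noncompact $\U$. This is a feature of the statement rather than a flaw in your argument, and Helgason's formulation has the same local character.
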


\begin{proof}

The proof of this result can be found in \cite[Proposition~1]{HEL1}.

\end{proof}

\begin{nota}

We denote by $\D(\M)$ the set of differential operators.

\end{nota}

\noindent From now on, we assume that $\G = \M$ is a connected Lie group. We denote by $(\L, \mathscr{L}^{2}(\G, dg))$ the left regular representation. Obviously, the space $\mathscr{C}^{\infty}_{c}(\G)$ is $\G$-invariant.

\noindent We define an action of $\G$ on $\D(\G)$ by
\begin{equation*}
(\tau(g)\D)(f) = \L_{g} \circ \D(f \circ \L_{g^{-1}}) \qquad (g \in \G, f \in \mathscr{C}^{\infty}_{c}(\G), \D \in \D(\G)).
\end{equation*}

\begin{defn}

We say that $\D \in \D(\G)$ is left-invariant if $\tau(g)\D = \D$ for all $g \in \G$, i.e. $\L_{g} \circ \D(f) = \D(f \circ \L_{g})$.

\end{defn}

\noindent We denote by $\D_{\G}(\G)$ the set of left-invariant differential operators of $\G$. Similarly, we say that $\D$ is right invariant if $\tau_{1}(g)(\D) = \D$ for every $g \in \G$, where 
\begin{equation*}
(\tau_{1}(g)\D)(f) = \R_{g^{-1}} \circ \D(f \circ \R_{g}) \qquad (f \in \mathscr{C}^{\infty}_{c}(\G).
\end{equation*}
The operator $\D$ is is said to be bi-invariant if $\tau(g)\tau_{1}(h)(\D) = \D$ for every $g, h \in \G$, i.e. $\R_{h^{-1}}\circ \L_{g} \circ \D(\L_{g^{-1}} \circ f \circ \R_{h}) = \D(f)$ for every $f \in \mathscr{C}^{\infty}_{c}(\G)$.

\begin{nota}

We denote by $\D^{\G}(\G)$ the set of right-invariant differential operators and by $\D^{\G}_{\G}(\G)$ the set of bi-invariant differential operators.

\label{NotationDGG}

\end{nota}

\noindent We recall the following result.

\begin{theo}

The natural embedding
\begin{equation*}
\mathfrak{g} \to \D_{\G}(\G)
\end{equation*}
extends to an algebra isomorphism
\begin{equation*}
\mathscr{U}(\mathfrak{g}) \to \D_{\G}(\G).
\end{equation*}
Moreover, its restriction to $\Z(\mathscr{U}(\mathfrak{g}))$ is isomorphic $\D^{\G}_{\G}(\G)$.

\label{IsomorphismEnveloppingDGG}

\end{theo}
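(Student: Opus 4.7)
\noindent The plan is to build the algebra homomorphism $\Phi\colon \mathscr{U}(\mathfrak{g}) \to \D_{\G}(\G)$, prove it is bijective via a local coordinate argument, and then identify its centre. First, to each $X \in \mathfrak{g}$ attach the left-invariant vector field $\widetilde{X}$ defined by $\widetilde{X}(f)(g) = \left.\frac{d}{dt}\right|_{t=0} f(g\exp(tX))$, which is a first-order element of $\D_{\G}(\G)$. A direct computation gives $[\widetilde{X},\widetilde{Y}] = \widetilde{[X,Y]}$, so $X \mapsto \widetilde{X}$ is a Lie algebra morphism into $\D_{\G}(\G)$ equipped with the commutator bracket; by the universal property of $\mathscr{U}(\mathfrak{g})$, it extends uniquely to a unital associative algebra morphism $\Phi$.

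For injectivity, I would fix a basis $X_{1}, \ldots, X_{n}$ of $\mathfrak{g}$ and work in exponential coordinates $\psi\colon (x_{1},\ldots,x_{n}) \mapsto \exp(x_{1}X_{1} + \cdots + x_{n}X_{n})$ near $e$. Using $\widetilde{X}_{i}(f)(e) = \partial_{x_{i}}(f \circ \psi)(0)$ together with the Baker--Campbell--Hausdorff formula, one proves inductively on $|a|$ that for every PBW monomial
\begin{equation*}
\Phi(X_{1}^{a_{1}} \cdots X_{n}^{a_{n}})(f)(e) = \frac{\partial^{|a|}(f \circ \psi)}{\partial x_{1}^{a_{1}} \cdots \partial x_{n}^{a_{n}}}(0) + (\text{terms of order } < |a|).
\end{equation*}
Testing $\Phi(u)$, for a non-zero $u = \sum c_{a} X^{a}$ written in the PBW basis, against $\psi$-pullbacks of monomials $x^{b}$ cut off near $0$, and exploiting this triangular structure, one recovers all the $c_{a}$, so $\Phi$ is injective.

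For surjectivity, let $D \in \D_{\G}(\G)$. Left-invariance gives $(Df)(g) = D(f \circ \L_{g})(e)$, so $D$ is determined by its action at $e$; in the exponential chart this action takes the form $(Df)(e) = \sum_{|a| \leq N} c_{a}\, \partial^{a}(f \circ \psi)(0)$ for some finite family of coefficients $(c_{a})$. Using the leading-order relation from the injectivity step, I would construct $u \in \mathscr{U}(\mathfrak{g})$ degree by degree, starting from $|a| = N$, such that $\Phi(u)$ has the same coefficients as $D$ at $e$; by left-invariance of both sides, $D = \Phi(u)$, and $\Phi$ is surjective.

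For the centre statement, the key observation is that $\R_{h^{-1}} \circ \Phi(u) \circ \R_{h} = \Phi(\Ad(h)\cdot u)$, where $\Ad$ is extended to $\mathscr{U}(\mathfrak{g})$ as an algebra automorphism. Hence $\Phi(u) \in \D_{\G}^{\G}(\G)$ iff $\Ad(h)u = u$ for every $h \in \G$; as $\G$ is connected this is equivalent to $[X,u]=0$ for all $X \in \mathfrak{g}$, i.e.\ $u \in \Z(\mathscr{U}(\mathfrak{g}))$. Combined with the previous two steps, this yields $\Phi\bigl(\Z(\mathscr{U}(\mathfrak{g}))\bigr) = \D_{\G}^{\G}(\G)$. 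The main technical obstacle is the PBW-based bookkeeping underlying injectivity and surjectivity: one must control, via Baker--Campbell--Hausdorff, the discrepancy between iterated left-invariant derivatives evaluated at $e$ and ordinary partial derivatives in the exponential chart, which is essentially the classical symmetrization isomorphism between $S(\mathfrak{g})$ and $\mathscr{U}(\mathfrak{g})$ viewed through this lens.
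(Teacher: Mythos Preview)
Your outline is correct and is the classical argument. The paper itself does not give a proof: it simply refers the reader to Helgason \cite{HEL2}, where essentially the same construction (left-invariant vector fields, universal property of $\mathscr{U}(\mathfrak{g})$, PBW/exponential-coordinate analysis for bijectivity, and the $\Ad$-invariance characterisation of the bi-invariant operators) is carried out in detail. So there is no discrepancy in approach to discuss; you have supplied what the paper outsources.
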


\begin{proof}

The proof of this result can be found in \cite{HEL2}.

\end{proof}

\subsubsection{Harish-Chandra isomorphism}

Let $\mathfrak{g}$ be a complex reductive Lie algebra and $\mathfrak{h}$ be a Cartan subalgebra of $\mathfrak{g}$.  We denote by $\mathscr{W} = \mathscr{W}(\mathfrak{g}, \mathfrak{h})$ the corresponding Weyl group.

\noindent We denote by $\eta^{+}$ the subalgebra of $\mathfrak{g}$ given by $\eta^{+} = \sum\limits_{\alpha \in \Psi^{+}(\mathfrak{g}, \mathfrak{h})} \mathbb{C} X_{\alpha}$, where $\mathfrak{g}_{\alpha} = \mathbb{C}X_{\alpha}$. Similarly, we denote by $\mathscr{N}$ and $\mathscr{P}$ the following subspaces of $\mathscr{U}(\mathfrak{g})$ given by:
\begin{equation*}
\mathscr{N} = \sum\limits_{\alpha \in \Phi^{+}(\mathfrak{g}, \mathfrak{h})} Y_{\alpha} \mathscr{U}(\mathfrak{g}) \qquad \mathscr{P} = \sum\limits_{\alpha \in \Phi^{+}(\mathfrak{g}, \mathfrak{h})} X_{\alpha} \mathscr{U}(\mathfrak{g}).
\end{equation*}
where $Y_{\alpha}$ is a basis of $\mathfrak{g}_{-\alpha}$.

\begin{lemme}

We get the following decomposition:
\begin{equation}
\mathscr{U}(\mathfrak{g}) = \mathscr{U}(\mathfrak{h}) \oplus (\mathscr{P} + \mathscr{N}).
\label{EquationFinal}
\end{equation}

\end{lemme}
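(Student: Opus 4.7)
The plan is to invoke the Poincar\'e--Birkhoff--Witt theorem adapted to the triangular decomposition $\mathfrak{g} = \eta^{-} \oplus \mathfrak{h} \oplus \eta^{+}$, where $\eta^{-}$ denotes the span of the $Y_{\alpha}$ for $\alpha \in \Phi^{+}$. First I would fix an ordered basis of $\mathfrak{g}$ in which the $Y_{\alpha}$ come first, followed by a basis $H_{1}, \ldots, H_{l}$ of $\mathfrak{h}$, and finally the $X_{\alpha}$. By PBW the ordered monomials $Y^{I} H^{J} X^{K}$ then form a vector-space basis of $\mathscr{U}(\mathfrak{g})$, and one obtains the canonical vector-space factorization $\mathscr{U}(\mathfrak{g}) = \mathscr{U}(\eta^{-}) \cdot \mathscr{U}(\mathfrak{h}) \cdot \mathscr{U}(\eta^{+})$.

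The crux is to locate each of the three subspaces $\mathscr{U}(\mathfrak{h})$, $\mathscr{N}$, $\mathscr{P}$ inside this basis. Visibly $\mathscr{U}(\mathfrak{h})$ is the span of the monomials with $I = K = 0$. For $\mathscr{N} = \sum_{\alpha} Y_{\alpha} \mathscr{U}(\mathfrak{g})$, I would use that $\eta^{-}$ is a subalgebra (so that $[Y_{\alpha}, Y_{\beta}] \in \eta^{-}$) and rewrite any product $Y_{\alpha} \cdot Y^{I} H^{J} X^{K}$ in PBW order by commuting $Y_{\alpha}$ leftward through the $Y$-factors only; this produces only PBW monomials $Y^{I'} H^{J'} X^{K'}$ with $|I'| \geq 1$. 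An analogous argument places $\mathscr{P}$ inside the span of PBW monomials with $|K'| \geq 1$. These three pieces are then pairwise disjoint in the PBW basis and together exhaust every monomial, giving in one stroke both the exhaustiveness $\mathscr{U}(\mathfrak{g}) = \mathscr{U}(\mathfrak{h}) + \mathscr{P} + \mathscr{N}$ and the triviality of the intersection $\mathscr{U}(\mathfrak{h}) \cap (\mathscr{P} + \mathscr{N}) = 0$.

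The main obstacle lies in the careful bookkeeping on the $\mathscr{P}$-side: commuting a leading $X_{\alpha}$ past a factor $Y_{\beta}$ produces $[X_{\alpha}, Y_{\beta}] \in \mathfrak{g}_{\alpha - \beta}$, which may lie in $\eta^{-}$, $\mathfrak{h}$, or $\eta^{+}$. One therefore has to verify by induction on the PBW-degree that, after all such substitutions and the final PBW re-ordering, no monomial of pure type $H^{J}$ ever survives in the expansion of an element of $\mathscr{P}$. This verification is precisely the classical computation that underlies the Harish-Chandra homomorphism and which ensures that the projection $\mathscr{U}(\mathfrak{g}) \to \mathscr{U}(\mathfrak{h})$ determined by \eqref{EquationFinal} is well defined.
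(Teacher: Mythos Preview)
The paper states this lemma without proof, so there is nothing to compare your argument to directly. Your PBW strategy is the standard one and is essentially correct \emph{for the intended statement}, but there is a genuine problem with the lemma as literally written, and your proposal does not resolve it.

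With the paper's definitions, both $\mathscr{N} = \eta^{-}\,\mathscr{U}(\mathfrak{g})$ and $\mathscr{P} = \eta^{+}\,\mathscr{U}(\mathfrak{g})$ are \emph{left} ideals. In that case the decomposition is simply false: already for $\mathfrak{g} = \mathfrak{sl}_{2}$ one has $XY \in \mathscr{P}$, $YX \in \mathscr{N}$, hence $H = XY - YX \in \mathscr{P} + \mathscr{N}$, while $H \in \mathscr{U}(\mathfrak{h})$. So $\mathscr{U}(\mathfrak{h}) \cap (\mathscr{P} + \mathscr{N}) \neq 0$. This is exactly the obstruction you flag in your last paragraph: when you commute a leading $X_{\alpha}$ past $Y_{\beta}$ you may produce a pure $H^{J}$ term, and in fact you \emph{do}. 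The ``classical computation'' you invoke does not save this; it pertains to a different setup.

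The statement the author intends (and the one needed for the Harish--Chandra homomorphism immediately afterwards) is the standard one with $\mathscr{P} = \mathscr{U}(\mathfrak{g})\,\eta^{+}$, a \emph{right} ideal. With that correction your PBW argument works without any delicate bookkeeping: ordering the basis as $Y_{\alpha}, H_{i}, X_{\alpha}$, one has $\mathscr{N}$ equal to the span of monomials with $|I| \geq 1$, $\mathscr{U}(\mathfrak{g})\,\eta^{+}$ equal to the span of monomials with $|K| \geq 1$, and $\mathscr{U}(\mathfrak{h})$ equal to the span of monomials with $I = K = 0$. These three subspaces visibly give the required direct-sum decomposition, and no inductive verification is needed on the $\mathscr{P}$-side. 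You should note the apparent misprint in the definition of $\mathscr{P}$ and prove the corrected statement.
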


\noindent We denote by $p_{1}: \mathscr{U}(\mathfrak{g}) \to \mathscr{U}(\mathfrak{h})$ the natural projection corresponding to Equation \eqref{EquationFinal}. We restrict this map to $\Z(\mathscr{U}(\mathfrak{g}))$. We denote by $\zeta_{1}$ the map:
\begin{equation*}
\zeta_{1}: \mathfrak{h} \ni h \to \zeta_{1}(h) = h - \rho(h).1 \in \S(\mathfrak{h}),
\end{equation*}
where $\rho = \frac{1}{2} \sum\limits_{\alpha \in \Phi^{+}(\mathfrak{g}, \mathfrak{h})} \alpha \in \mathfrak{h}^{*}$.

\noindent Using the universal property, we can extend the map $\zeta_{1}$ to $\S(\mathfrak{h})$. We denote by $\gamma$ the map:
\begin{equation*}
\gamma = \zeta_{1} \circ p_{1}: \Z(\mathscr{U}(\mathfrak{g})) \to \S(\mathfrak{h}).
\end{equation*}

\begin{theo}

The map $\gamma$ is an algebra homomorphism which is injective. Moreover, $\Im(\gamma) = \S(\mathfrak{h})^{\mathscr{W}}$ and then:
\begin{equation*}
\gamma: \Z(\mathscr{U}(\mathfrak{g})) \to \S(\mathfrak{h})^{\mathscr{W}}.
\end{equation*}
is a bijection.

\label{HarishChandraIsomorphism}

\end{theo}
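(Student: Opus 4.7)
The plan is to establish the four claims (algebra homomorphism, injective, image lands in $\S(\mathfrak{h})^{\mathscr{W}}$, and image equals $\S(\mathfrak{h})^{\mathscr{W}}$) in that order, exploiting the compatibility of $p_{1}$ with the natural action of $\Z(\mathscr{U}(\mathfrak{g}))$ on highest weight modules and, at the last step, reducing surjectivity to the classical Chevalley restriction theorem via the associated graded construction.

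First, for the algebra homomorphism property, I would argue that although $p_{1}$ is not multiplicative on all of $\mathscr{U}(\mathfrak{g})$, it is multiplicative on the subalgebra $\mathscr{U}(\mathfrak{g})^{\mathfrak{h}}$ of $\mathfrak{h}$-weight $0$ elements under $\ad$, which contains $\Z(\mathscr{U}(\mathfrak{g}))$. Indeed, any element $u \in \mathscr{U}(\mathfrak{g})^{\mathfrak{h}}$ decomposes as $u = p_{1}(u) + r$ with $r \in \mathscr{P}$ (using the PBW basis ordered so that positive root vectors stand on the right), and for two such elements $u, v$, the product $r \cdot v$ lies in $\mathscr{P}$ while $p_{1}(u) \cdot r'$ (for $v = p_{1}(v) + r'$) can be rewritten modulo $\mathscr{P} + \mathscr{N}$ using the $\mathfrak{h}$-invariance of $r'$. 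Since $\zeta_{1}$ is plainly an algebra automorphism of $\S(\mathfrak{h}) = \mathscr{U}(\mathfrak{h})$, the composition $\gamma$ is an algebra homomorphism on $\Z(\mathscr{U}(\mathfrak{g}))$.

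Next, I would pass to Verma modules to handle invariance and injectivity simultaneously. Let $M(\lambda)$ be the Verma module with highest weight vector $v_{\lambda}$ (for $\lambda \in \mathfrak{h}^{*}$). A direct computation using the decomposition of Equation \eqref{EquationFinal} shows that for $z \in \Z(\mathscr{U}(\mathfrak{g}))$ one has $z \cdot v_{\lambda} = p_{1}(z)(\lambda) v_{\lambda}$, so that $z$ acts on all of $M(\lambda)$ by the scalar $p_{1}(z)(\lambda) = \gamma(z)(\lambda + \rho)$. Now the key observation: whenever $\lambda + \rho$ is dominant integral and regular, the simple modules $L(\lambda)$ and $L(w \cdot \lambda)$ (with $w \cdot \lambda = w(\lambda + \rho) - \rho$) appear as Jordan-Hölder constituents of a common Verma module, so they share the same infinitesimal character. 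This forces $\gamma(z)(\mu) = \gamma(z)(w\mu)$ for all $\mu$ in the dominant regular lattice, and by Zariski density $\gamma(z) \in \S(\mathfrak{h})^{\mathscr{W}}$. For injectivity, if $\gamma(z) = 0$ then $z$ acts by $0$ on every Verma module $M(\lambda)$; picking any faithful representation of $\Z(\mathscr{U}(\mathfrak{g}))$ built from such modules (or directly using that the collection of all $M(\lambda)$ separates $\Z(\mathscr{U}(\mathfrak{g}))$), one concludes $z = 0$.

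The main obstacle is surjectivity onto $\S(\mathfrak{h})^{\mathscr{W}}$. The plan is to filter $\mathscr{U}(\mathfrak{g})$ by the standard degree filtration $\mathscr{U}(\mathfrak{g})_{\leq n}$, which induces a filtration on $\Z(\mathscr{U}(\mathfrak{g}))$, and to analyze the associated graded map. The symmetrization isomorphism $\gr \mathscr{U}(\mathfrak{g}) \cong \S(\mathfrak{g})$ is $\G$-equivariant, and the graded version of $p_{1}$ becomes the restriction map $\S(\mathfrak{g}) \to \S(\mathfrak{h})$ composed with the projection killing the non-zero $\mathfrak{h}$-weight components; on $\S(\mathfrak{g})^{\mathfrak{g}}$ this is exactly the Chevalley restriction homomorphism. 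The $\rho$-shift disappears at the graded level since $\rho$ has lower degree than the leading term. Invoking Chevalley's theorem, $\gr(\gamma): \gr \Z(\mathscr{U}(\mathfrak{g})) \to \S(\mathfrak{h})^{\mathscr{W}}$ is surjective, and a standard filtration lifting argument (picking preimages degree by degree) then shows that $\gamma$ itself is surjective, completing the proof.
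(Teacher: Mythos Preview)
The paper does not actually prove this theorem: it is stated in Appendix~\ref{AppendixA} as a classical result (the Harish-Chandra isomorphism) and no proof is given; the surrounding material simply recalls standard facts, with the subsequent Theorem~\ref{TheoremInfinitesimalCharacter} referred to \cite{KNA1}. Your proposal is therefore not competing with any argument in the paper.

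That said, your outline is the standard textbook proof (as in Humphreys, Dixmier, or Knapp--Vogan): multiplicativity of $p_{1}$ on the zero-weight subalgebra, the Verma module computation $z\cdot v_{\lambda} = \gamma(z)(\lambda+\rho)v_{\lambda}$ to obtain $\mathscr{W}$-invariance by Zariski density and injectivity, and surjectivity by passing to the associated graded and invoking Chevalley restriction. The sketch is correct in spirit; the only places where a careful write-up would need more detail are (i) the claim that $p_{1}$ is multiplicative on $\mathscr{U}(\mathfrak{g})^{\mathfrak{h}}$, where one should use that $\mathscr{P}\cap\mathscr{U}(\mathfrak{g})^{\mathfrak{h}}$ is a two-sided ideal of $\mathscr{U}(\mathfrak{g})^{\mathfrak{h}}$ rather than the slightly informal rewriting you indicate, and (ii) the $\mathscr{W}$-invariance step, where the cleanest argument is to embed $M(s_{\alpha}\cdot\lambda)$ into $M(\lambda)$ for a simple reflection $s_{\alpha}$ when $\langle\lambda+\rho,\alpha^{\vee}\rangle\in\mathbb{Z}_{>0}$, rather than appealing to Jordan--H\"older constituents.
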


\begin{rema}

Harish-Chandra's isomorphism classify all the possible infinitesimal character. Indeed, let $\lambda: \mathfrak{h} \to \mathbb{C}$ be a linear map. Using he universal property of the symmetric algebra \cite[Appendix~C]{GOOD}, the linear form $\lambda$ can be extended to a linear map $\lambda: \S(\mathfrak{h}) \to \mathbb{C}$ and by using the map $\lambda$, we get a map $\chi_{\lambda}: \Z(\mathscr{U}(\mathfrak{g})) \to \mathbb{C}$ given by:
\begin{equation*}
\chi_{\lambda}(z) = \lambda(\gamma(z)) \qquad (z \in \Z(\mathscr{U}(\mathfrak{g}))).
\end{equation*}
\label{RemarkInfinitesimalCharacter}
\end{rema}

\noindent We recall the following theorem.

\begin{theo}

Let $\mathfrak{g}$ be a complex reductive Lie algebra and $\mathfrak{h}$ a Cartan subalgebra of $\mathfrak{g}$. Then every homomorphism of $\Z(\mathscr{U}(\mathfrak{g}))$ into $\mathbb{C}$ sending $1$ to $1$ is of the form $\chi_{\lambda}, \lambda \in \mathfrak{h}^{*}$. If $\lambda$ and $\lambda' \in \mathfrak{h}^{*}$, then $\chi_{\lambda} = \chi_{\lambda'}$ if and only if $\lambda$ and $\lambda'$ are in the same $\mathscr{W}$-orbit.

\noindent In particular, $\Spec(\Z(\mathscr{U}(\mathfrak{g}))) \approx \mathfrak{h}^{*} / \mathscr{W}$.

\label{TheoremInfinitesimalCharacter}

\end{theo}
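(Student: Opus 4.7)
The plan is to reduce everything to the Harish-Chandra isomorphism $\gamma: \Z(\mathscr{U}(\mathfrak{g})) \to \S(\mathfrak{h})^{\mathscr{W}}$ of Theorem \ref{HarishChandraIsomorphism} and then invoke standard invariant theory on $\mathfrak{h}^{*}$. Since $\chi_{\lambda}(z) = \lambda(\gamma(z))$ by definition, specifying a character $\chi_{\lambda}$ of $\Z(\mathscr{U}(\mathfrak{g}))$ is the same as specifying the algebra homomorphism $\S(\mathfrak{h})^{\mathscr{W}} \to \mathbb{C}$ obtained by restricting the evaluation-at-$\lambda$ map $\mathrm{ev}_{\lambda}: \S(\mathfrak{h}) \to \mathbb{C}$. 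So the theorem is equivalent to the two claims: (i) every algebra homomorphism $\S(\mathfrak{h})^{\mathscr{W}} \to \mathbb{C}$ is of the form $\mathrm{ev}_{\lambda}|_{\S(\mathfrak{h})^{\mathscr{W}}}$ for some $\lambda \in \mathfrak{h}^{*}$, and (ii) two such restrictions coincide iff $\lambda$ and $\lambda'$ are in the same $\mathscr{W}$-orbit.

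For (i), I would first recall Chevalley's theorem: since $\mathscr{W}$ is a finite group acting on the finite-dimensional vector space $\mathfrak{h}$ (hence on $\S(\mathfrak{h})$), the extension $\S(\mathfrak{h})^{\mathscr{W}} \subseteq \S(\mathfrak{h})$ is integral and finitely generated, in fact $\S(\mathfrak{h})$ is a free module of rank $|\mathscr{W}|$ over $\S(\mathfrak{h})^{\mathscr{W}}$. Given a character $\psi: \S(\mathfrak{h})^{\mathscr{W}} \to \mathbb{C}$, the going-up theorem (or the fact that in a finite ring extension every maximal ideal is the contraction of a maximal ideal) ensures that $\ker \psi$ is the contraction of a maximal ideal $\mathfrak{m}$ of $\S(\mathfrak{h})$. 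But $\S(\mathfrak{h}) \cong \mathbb{C}[\mathfrak{h}^{*}]$ is a finitely generated reduced $\mathbb{C}$-algebra, so by the Nullstellensatz $\mathfrak{m} = \ker \mathrm{ev}_{\lambda}$ for some $\lambda \in \mathfrak{h}^{*}$, and then $\psi = \mathrm{ev}_{\lambda}|_{\S(\mathfrak{h})^{\mathscr{W}}} = \chi_{\lambda}$.

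For (ii), it is clear that if $\lambda' = w \cdot \lambda$ for some $w \in \mathscr{W}$ then $\mathrm{ev}_{\lambda'}$ and $\mathrm{ev}_{\lambda}$ agree on $\S(\mathfrak{h})^{\mathscr{W}}$, giving $\chi_{\lambda} = \chi_{\lambda'}$. For the converse, suppose $\lambda$ and $\lambda'$ are \emph{not} $\mathscr{W}$-conjugate; then the finite set $\mathscr{W} \cdot \lambda \cup \mathscr{W} \cdot \lambda'$ has two disjoint orbits, and by Lagrange interpolation on $\mathfrak{h}^{*}$ one produces a polynomial $f \in \S(\mathfrak{h})$ that is $0$ on $\mathscr{W} \cdot \lambda$ and $1$ on $\mathscr{W} \cdot \lambda'$. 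Averaging over $\mathscr{W}$ yields $\tilde f = \frac{1}{|\mathscr{W}|} \sum_{w \in \mathscr{W}} w \cdot f \in \S(\mathfrak{h})^{\mathscr{W}}$ with $\tilde f(\lambda) = 0$ and $\tilde f(\lambda') = 1$, so $\chi_{\lambda} \ne \chi_{\lambda'}$. The final statement $\Spec(\Z(\mathscr{U}(\mathfrak{g}))) \approx \mathfrak{h}^{*}/\mathscr{W}$ is then just a rephrasing of (i) and (ii) via Theorem \ref{HarishChandraIsomorphism}.

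There is no real obstacle here: the proof is essentially the composition of Harish-Chandra's isomorphism with Chevalley's theorem and the Nullstellensatz, all applied over $\mathbb{C}$. The only subtlety worth flagging is invoking the ring-theoretic fact that for a finite integral extension $A \subseteq B$ of commutative rings, every maximal ideal of $A$ is the contraction of a maximal ideal of $B$; this is exactly what allows one to extend $\psi$ from $\S(\mathfrak{h})^{\mathscr{W}}$ to $\S(\mathfrak{h})$ and hence realize it as evaluation at some $\lambda \in \mathfrak{h}^{*}$.
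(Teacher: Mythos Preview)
Your argument is correct and is the standard proof of this result. Note, however, that the paper does not actually give a proof: it simply refers the reader to \cite{KNA1}. So there is nothing to compare against beyond observing that the route you take---transport the problem through the Harish-Chandra isomorphism $\gamma$ to $\S(\mathfrak{h})^{\mathscr{W}}$, then use Chevalley's theorem plus going-up and the Nullstellensatz for surjectivity, and an averaging/interpolation argument to separate distinct $\mathscr{W}$-orbits---is exactly the proof one finds in the cited reference. Your flagged ``subtlety'' (lying-over for finite integral extensions) is the only nontrivial input beyond Theorem~\ref{HarishChandraIsomorphism}, and it is used correctly.
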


\noindent The proof of this result can be found in \cite{KNA1}.

\section{A general lemma for unitary groups}

\label{AppendixB}

Let $\U$ be a maximal compact subgroup of $\Sp(\W)$. It is well-known that the restriction of $\omega$ to $\widetilde{\U}$ is a direct sum of irreducible representations whose multiplicity is one. Moreover, the lowest $\widetilde{\U}$-type $\V_{\omega}$ is one-dimensional. Let $x$ be a non-zero vector in $\V_{\omega}$ and let $\Omega$ be the function on $\widetilde{\Sp}(\W)$ given by
\begin{equation*}
\Omega(\tilde{g}) = \langle\omega(\tilde{g})x, x\rangle, \qquad (\tilde{g} \in \widetilde{\Sp}(\W)).
\end{equation*} 
We denote by $\xi_{\omega}$ the (unitary) character of $\widetilde{\K}$ such that $\omega(\tilde{k})x = \xi_{\omega}(\tilde{k})x, \tilde{k} \in \widetilde{\K}$. One can check that for every $\tilde{k}_{1}, \tilde{k}_{2} \in \widetilde{\K}$ and $\tilde{g} \in \widetilde{\G}$,
\begin{equation*}
\Omega(\tilde{k}_{1}\tilde{g}\tilde{k}_{2}) = \langle\omega(\tilde{k}_{1}\tilde{g}\tilde{k}_{2})x, x\rangle = \langle\omega(\tilde{g}\tilde{k}_{2})x, \omega(\tilde{k}^{-1}_{1})x\rangle = \xi_{\omega}(k_{2}k^{-1}_{1}) \langle\omega(\tilde{g})x, x\rangle = \xi_{\omega}(k_{2}k^{-1}_{1})\Omega(\tilde{g}).
\end{equation*}
In particular, the map $\widetilde{\G} \ni \tilde{g} \to |\Omega(\tilde{g})| \in \mathbb{C}$ is $\widetilde{\K}$-bi-invariant. In particular, using the decomposition $\widetilde{\Sp}(W) = \widetilde{\K}\widetilde{\A}\widetilde{\K}$ as in \cite[Section~3.6.7]{WAL}, with $\A = \Cl(\A^{+})$, $\A^{+} = \exp(\mathfrak{a}^{+})$, $\mathfrak{a}$ the maximal split Cartan subalgebra of $\mathfrak{sp}(\W)$ and $\mathfrak{a}^{+} = \left\{H \in \mathfrak{a}, \alpha(H) > 0, \alpha \in \Psi^{+}\right\}$, we get for every $\tilde{g} = \tilde{k}_{1}\tilde{a}\tilde{k}_{2} \in \widetilde{\K}\widetilde{\A}\widetilde{\K}$ that $|\Omega(\tilde{g})| = |\Omega(\tilde{a})|$.

\noindent We denote by $\Xi$ the $\widetilde{\K}$-bi-invariant function defined in \cite[Section~4.5.3]{WAL}. 

\begin{lemme}

Let $(\G, \G') = (\U(\V), \U(\V')) \subseteq \Sp((\V \otimes \V')_{\mathbb{R}})$ be a dual pairs of unitary groups such that $\dim_{\mathbb{C}}(\V) \leq \dim_{\mathbb{C}}(\V')$. Then, there exists a constant $\C_{\Omega} > 0$ such that:
\begin{equation}
|\Omega(\tilde{g})| \leq \C_{\Omega} \Xi(\tilde{g}), \qquad (\tilde{g} \in \widetilde{\G}).
\label{EquationEstimatesAppendixB}
\end{equation}

\label{LemmaAppendixB}

\end{lemme}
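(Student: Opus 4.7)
The plan is to prove the estimate on the split Cartan $\A$ of $\G$, reducing the general inequality via the $\K\A\K$ decomposition and then comparing the explicit formula for $\Omega$ on $\tilde\A$ against the standard asymptotics of $\Xi$. Let $(p,q)$ be the signature of $\V$ with $p\le q$ and set $n=p+q$, so $\dim_{\mathbb{R}}\A=p$. Both $|\Omega|$ and $\Xi$ are $\tilde\K$-bi-invariant on $\tilde\G$, hence determined by their restriction to $\tilde\A$; by continuity, the inequality is trivial on any compact subset of $\tilde\G$, and it suffices to establish $|\Omega(\tilde a_t)|\le C\,\Xi(\tilde a_t)$ uniformly as $a_t=\exp(t_1 H_1+\dots+t_p H_p)$ ranges over the closure of the positive Weyl chamber $\A^{+}$.

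The first substantial step is the explicit evaluation of $\Omega$ on $\tilde\A$. Decompose $\V$ as an orthogonal sum of $p$ hyperbolic planes $\V^{(i)}_{+}\oplus\V^{(i)}_{-}$ (of signature $(1,1)$) and an anisotropic part $\V_{0}$ of signature $(0,q-p)$, and normalise the split Cartan so that $a_t$ acts as $e^{t_i}$ on $\V^{(i)}_{+}$, $e^{-t_i}$ on $\V^{(i)}_{-}$, and trivially on $\V_{0}$. This induces a symplectic decomposition
\begin{equation*}
\W_{\mathbb{R}}=\bigoplus_{i=1}^{p}\bigl(\Hom_{\mathbb{C}}(\V',\V^{(i)}_{+})\oplus\Hom_{\mathbb{C}}(\V',\V^{(i)}_{-})\bigr)_{\mathbb{R}}\oplus\Hom_{\mathbb{C}}(\V',\V_{0})_{\mathbb{R}}\,,
\end{equation*}
in which the $i$-th hyperbolic block has real dimension $4n'$ (i.e.\ $2n'$ Darboux pairs) and $a_t$ acts on it as $\diag(e^{t_i}\Id_{2n'},e^{-t_i}\Id_{2n'})$, while acting trivially on the last summand. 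Choosing a polarization of $\W_{\mathbb{R}}$ compatible with this splitting, the Gaussian $x$ is a tensor product of vacuum vectors in the corresponding smaller Weil representations, and the $\SL(2,\mathbb{R})$-formula $\langle\omega(\diag(e^{t},e^{-t}))\psi_{0},\psi_{0}\rangle=(\cosh t)^{-1/2}$ applied to each of the $2n'$ Darboux pairs per block multiplies out to
\begin{equation*}
|\Omega(\tilde a_t)|=\prod_{i=1}^{p}(\cosh t_i)^{-n'}\,.
\end{equation*}

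The second step compares this with $\Xi$. The restricted root system of $\U(p,q)$ on $\mathfrak a$ is of type $\mathrm{BC}_{p}$ with multiplicities $m(2e_i)=1$, $m(e_i\pm e_j)=2$ for $i<j$, and $m(e_i)=2(q-p)$, and a direct computation yields $\rho(H_i)=n-2i+1$. Harish-Chandra's lower bound for the spherical function (cf.\ \cite[Section~4.5.3]{WAL}) furnishes a constant $c>0$ such that
\begin{equation*}
\Xi(\tilde a_t)\ge c\,\exp\Bigl(-\sum_{i=1}^{p}(n-2i+1)t_i\Bigr)\qquad(a_t\in\overline{\A^+}),
\end{equation*}
and combining with the trivial inequality $(\cosh t_i)^{-n'}\le 2^{n'}e^{-n' t_i}$ for $t_i\ge 0$, the ratio $|\Omega(\tilde a_t)|/\Xi(\tilde a_t)$ is bounded by $C_1\exp(-\sum_{i=1}^{p}(n'-n+2i-1)t_i)$. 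Under the hypothesis $n\le n'$, each exponent $n'-n+2i-1\ge 2i-1\ge 1$ is strictly positive, so the ratio stays uniformly bounded on $\overline{\tilde\A^+}$, which together with the compact-set estimate produces the desired $\C_{\Omega}$.

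The principal obstacle is the tensor-product factorisation of $\omega|_{\tilde\A}$ in Step 2: one must select a polarization of $\W_{\mathbb{R}}$ simultaneously compatible with the eigenspace decomposition of every $a_t\in\A$, and verify that the metaplectic two-cocycle is multiplicative under the resulting symplectic direct sum so that $x$ corresponds to the tensor product of the vacua of the smaller Weil representations (absolute values are immune to the cocycle, which is the reason only $|\Omega|$ appears in the statement). Once this factorisation is in hand, the explicit product formula reduces to the one-dimensional $\SL(2,\mathbb{R})$-calculation, and the numerical matching between the exponent $n'$ and the maximum value $\rho(H_1)=n-1$ of the Harish-Chandra $\rho$ on the positive chamber is precisely what makes the hypothesis $\dim_{\mathbb{C}}\V\le\dim_{\mathbb{C}}\V'$ the right one.
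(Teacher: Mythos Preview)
Your proof is correct and follows the same overall strategy as the paper: reduce to $\widetilde{\A}$ by $\widetilde{\K}$-bi-invariance, obtain the explicit product formula $|\Omega(\tilde a_t)|=\prod_{i=1}^{p}(\cosh t_i)^{-n'}$, and compare it against the Harish-Chandra lower bound $\Xi(\tilde a)\ge c\,\tilde a^{-\rho}$ on $\overline{\A^{+}}$.

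The difference lies in how the product formula for $|\Omega|$ is obtained. The paper invokes the Cayley-transform identity $|\Omega(\tilde c(X))|=\C\,|\det_{\mathbb{R}}(\Id-X)|^{d'/2}|\det(i\Id-\J X)|^{-d'/2}$ from \cite{TOM8} and performs explicit $4\times 4$ determinant calculations for $\U(1,1)$, then builds up inductively through $\U(p,p)$ to $\U(p,q)$. You instead decompose $\W_{\mathbb{R}}$ symplectically along the hyperbolic planes of $\V$, use the tensor factorisation of the Weil representation, and reduce everything to the single $\SL(2,\mathbb{R})$ coefficient $(\cosh t)^{-1/2}$. Your route is more conceptual and avoids matrix computations entirely; the paper's route has the advantage of resting on an already-cited black-box formula, so no justification of the tensor splitting of the vacuum or of the cocycle behaviour is needed. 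Your computation of the restricted $\rho$ as $\rho(H_i)=n-2i+1$ is also sharper than the paper's (which works with the $\GL$-type $\rho$ through a conjugated embedding), and it makes transparent why the hypothesis $n'\ge n$ is exactly the threshold: the worst exponent is $n'-n+1$, which also explains the paper's closing remark that the estimate still holds for $(\U(1,1),\U(1))$.
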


\begin{rema}

As explained in \cite[Equation~6.4]{TOM8}, for an irreducible reductive dual pair $(\G, \G')$, there exist a constant $\C = \C_{d, d'} > 0$ such that 
\begin{equation*}
\left|\Omega(\tilde{c}(\X))\right| = \C|\det_{\mathbb{R}}(\Id - \X)|^{\frac{d'}{2}}|\det(i\Id - \J\X)|^{-\frac{d'}{2}}, \qquad (\X \in \mathfrak{g}^{c}),
\end{equation*}
where $d = \dim_{\mathbb{D}}(\V)$ and $d' = \dim_{\mathbb{D}}(\V')$.

\label{RemarkAppendixB}

\end{rema}

\begin{proof}

We start by determining the value of $\Omega$ for the dual pair $(\G, \G') = (\U(1, 1), \U(1))$. Let $\G = \K\A\K$ be the decomposition of $\G$ as in \cite[Section~3.6.7]{WAL}. In this case,
\begin{equation*}
\A = \left\{\begin{pmatrix} \ch(X) & \sh(X) \\ \sh(X) & \ch(X) \end{pmatrix}, X \in \mathbb{R}^{*}_{+}\right\}. 
\end{equation*}
Let $a(X) \in \A^{c}$ and $b(X) \in \mathfrak{a}^{c}$ such that $a(X) = c(b(X))$. One can easily check that 
\begin{equation*}
b(X) = \begin{pmatrix} 0 & \alpha(X) \\ \alpha(X) & 0 \end{pmatrix},
\end{equation*}
where $\alpha(X) = \cfrac{\sh(X)}{\ch(X) - 1}$. Note that $\alpha(X) = \cfrac{1}{\th(\frac{X}{2})}$.

\noindent Let $\mathscr{B} = \{e_{1}, e_{2}\}$ be a basis of $\V$ such that $\Mat_{\mathscr{B}}\left(\cdot, \cdot\right) = \Id_{1, 1}$. Then, using that $\mathscr{B}_{\mathbb{R}} = \{e_{1}, e_{2}, ie_{1}, ie_{2}\}$ is a basis of the real vector space $\V_{\mathbb{R}}$, it follows that:
\begin{equation*}
\det_{\mathbb{R}}(\Id - b(X)) = \det\begin{pmatrix} 1 & -\alpha(X) & 0 & 0 \\ -\alpha(X) & 1 & 0 & 0 \\ 0 & 0 & 1 & -\alpha(X) \\ 0 & 0 & -\alpha(X) & 1 \end{pmatrix} =  (1 - \alpha(X)^{2})^{2}.
\end{equation*}
Similarly, using that 
\begin{equation*}
\J = \Mat_{\mathscr{B}_{\mathbb{R}}} \left(\cdot, \cdot\right)_{\mathbb{R}} = \begin{pmatrix} 0 & 0 & 1 & 0 \\ 0 & 0 & 0 & -1 \\ -1 & 0 & 0 & 0 \\ 0 & 1 & 0 & 0 \end{pmatrix},
\end{equation*}
we get:
\begin{eqnarray*}
\det(i\Id - \J b(X)) & = & \det\left(\begin{pmatrix} 0 & 0 & -1 & 0 \\ 0 & 0 & 0 & -1 \\ 1 & 0 & 0 & 0 \\ 0 & 1 & 0 & 0 \end{pmatrix} - \begin{pmatrix} 0 & 0 & 1 & 0 \\ 0 & 0 & 0 & -1 \\ -1 & 0 & 0 & 0 \\ 0 & 1 & 0 & 0 \end{pmatrix} \begin{pmatrix} 0 & \alpha(X) & 0 & 0 \\ \alpha(X) & 0 & 0 & 0 \\ 0 & 0 & 0 & \alpha(X) \\ 0 & 0 & \alpha(X) & 0 \end{pmatrix}\right) \\ 
& = & \det\left(\begin{pmatrix} 0 & 0 & -1 & 0 \\ 0 & 0 & 0 & -1 \\ 1 & 0 & 0 & 0 \\ 0 & 1 & 0 & 0 \end{pmatrix} - \begin{pmatrix} 0 & 0 & 0 & \alpha(X) \\ 0 & 0 & -\alpha(X) & 0 \\ 0 & -\alpha(X) & 0 & 0 \\ \alpha(X) & 0 & 0 & 0 \end{pmatrix} \right) = \det \begin{pmatrix} 0 & 0 & -1 & -\alpha(X) \\ 0 & 0 & \alpha(X) & -1 \\ 1 & \alpha(X) & 0 & 0 \\ -\alpha(X) & 1 & 0 & 0 \end{pmatrix} \\
& = & (1 + \alpha(X)^{2})^{2}
\end{eqnarray*}
Using that 
\begin{equation*}
\cfrac{\th(\frac{X}{2})^{2} - 1}{\th(\frac{X}{2})^{2} - 1} = \cfrac{-1}{\ch(X)},
\end{equation*}
it follows from Remark \ref{RemarkAppendixB} that there exists $\C > 0$ such that:
\begin{equation*}
|\Omega(\tilde{c}(b(X)))| = \C\left|\cfrac{1 - \alpha(X)^{2}}{1 + \alpha(X)^{2}}\right| = \cfrac{\C}{\ch(X)}.
\end{equation*}
In particular, for the dual pair $(\G, \G') = (\U(1, 1), \U(n))$, we get for every $a(X) = c(b(X)) \in \A^{c} \subseteq \G$ that:
\begin{equation*}
|\Omega(\tilde{c}(b(X)))| = \cfrac{\C}{\ch(X)^{n}}.
\end{equation*}
From \cite[Theorem~4.5.3]{WAL}, we know that $\tilde{a}(X)^{-\rho} \leq \Xi(\tilde{a}(X))$, with $\tilde{a}(X) = \tilde{c}(b(X))$. In our case, we get that $\tilde{a}(X)^{-\rho} = e^{-X}$ and using that for every $n \geq 1$, $\ch(X)^{n} \geq \ch(X) \geq e^{X}$, it follows that:
\begin{equation*}
|\Omega(\tilde{c}(b(X)))| = \cfrac{\C}{\ch(X)^{n}} \leq \cfrac{\C}{\ch(X)} \leq \C e^{-X} \leq \C\Xi(\tilde{a}(X)).
\end{equation*}    
In particular, using the $\widetilde{\K}$-bi-invariance of $\Omega$ and $\Xi$, we get that $\Omega(\tilde{g}) \leq \C\Xi(\tilde{g})$ for every $\tilde{g} \in \widetilde{\G}$ for $(\G, \G') = (\U(1, 1), \U(n))$. One can easily check that $\G'$ can be replaced by $\U(r, s)$ and the computations are similar.

\noindent We can now extend it to $(\G, \G') = (\U(p, p), \U(n))$. In this case, 
\begin{equation*}
\A = \left\{\D = \begin{pmatrix} \D_{1}(X) & \D_{2}(X) \\ \D_{2}(X) & \D_{1}(X) \end{pmatrix}, X \in {\mathbb{R}^{*}_{+}}^{p}\right\}
\end{equation*}
where for $X = (X_{1}, \ldots, X_{p})$, $\D_{1}(X) = \diag(\ch(X_{1}), \ldots, \ch(X_{p}))$ and $\D_{2}(X) = \diag(\sh(X_{1}), \ldots, \sh(X_{p}))$. One can easily check that there exists $\C > 0$ such that
\begin{equation*}
|\Omega(\tilde{c}(b(X)))| = \cfrac{\C}{\prod\limits_{i=1}^{p}\ch(X_{i})^{n}}.
\end{equation*}
In this case, $\rho = \sum\limits_{i=1}^{2p} \cfrac{2p - 2i + 1}{2}e_{i}$, and from Equation \eqref{DiagonalHSi}, we get
\begin{equation*}
\tilde{a}(X)^{-\rho} = \diag(e^{-X_{1}}, \ldots, e^{-X_{p}}, e^{X_{1}}, \ldots, e^{X_{p}})^{-\rho} = \prod\limits_{k=1}^{p} e^{-2pX_{k}}
\end{equation*}
If $n \geq 2p$, it follows that $\ch(X_{k})^{n} \geq \ch(X_{k})^{2p} \geq e^{2pX_{k}}$ for every $k \in [|1, p|]$ and then,
\begin{equation*}
|\Omega(\tilde{c}(b(X)))| = \cfrac{\C}{\prod\limits_{i=1}^{p}\ch(X_{i})^{n}} \leq \C\prod\limits_{k=1}^{p} e^{-2pX_{k}} \leq \Xi(\tilde{a}(X)).
\end{equation*}
Again, $\U(n)$ can be replaced by $\U(r, s)$ as long as $r+s \geq 2p$. Finally, it $\G = \U(p, q)$, with $p \leq q$, we get that:
\begin{equation*}
\A = \left\{\D = \begin{pmatrix} \D_{3}(X) & \D_{4}(X) \\ \D^{t}_{4}(X) & \D^{\diamond}_{3}(X) \end{pmatrix}, X \in {\mathbb{R}^{*}_{+}}^{p}\right\}
\end{equation*}
where $\D_{3}(X) = \diag(\ch(X_{1}), \ldots, \ch(X_{p}))$, $\D_{4}(X) = (\diag(\sh(X_{1}), \ldots, \sh(X_{p})), 0_{p, q})$, where $0_{p, q}$ is the zero matrix of $\Mat(p, q-p)$, and 
\begin{equation*}
\D^{\diamond}_{3}(X) = \begin{pmatrix} \diag(\ch(X_{1}), \ldots, \ch(X_{p})) & 0_{p, q-p} \\ 0_{q-p, p} & 0_{q-p, q-p} \end{pmatrix}.
\end{equation*}
and one can check that the computations done for $\U(p, p)$ extends easily to $\U(p, q)$. The lemma follows.

\end{proof}

\begin{rema}

One can easily see that the condition $\dim_{\mathbb{C}}(V) \leq \dim_{\mathbb{C}}(\V')$ of Lemma \ref{LemmaAppendixB} does not mean that similar estimates cannot be obtained in some cases if $\dim_{\mathbb{C}}(V) > \dim_{\mathbb{C}}(\V')$. Indeed, it follows from the proof of Lemma \ref{LemmaAppendixB} that the inequality \eqref{EquationEstimatesAppendixB} can be obtained for $(\G, \G') = (\U(1, 1), \U(1))$.

\end{rema}

\bibliographystyle{plain}


\end{document}